\newcommand{\vol}{\textnormal{vol}}
\def\f12{\frac 1 2}
\def\a{\alpha}
\def\b{\beta}
\def\ga{\gamma}
\def\Ga{\Gamma}
\def\ep{\epsilon}
\def\si{\sigma}
\def\Si{\Sigma}
\def\om{\omega}
\def\Om{\Omega}
\def\pa{\partial}
\def\les{\lesssim}
\def\f12{\frac 1 2}
\newcommand{\lap}{\mbox{$\Delta \mkern-13mu /$\,}}
\newcommand{\nabb}{\mbox{$\nabla \mkern-13mu /$\,}}
\newtheorem{thm}{Theorem}
\newtheorem{prop}{Proposition}
\newtheorem{lem}{Lemma}
\newtheorem{cor}{Corollary}
\newtheorem{remark}{Remark}
\newtheorem{Def}{Definition}
\begin{document}
\title{Global stability of solutions to nonlinear wave equations}
\author{Shiwu Yang}
\date{}
\maketitle

\begin{abstract}
We consider the problem of global stability of solutions to a class of
semilinear wave equations
 with null condition in Minkowski space. We give sufficient conditions
on the given solution which guarantees stability. Our stability result can be reduced to
a small data global existence result for a class of
semilinear wave equations with linear terms
$B^{\mu\nu}\pa_\mu\Phi(t, x)\pa_\nu\phi$, $L^\mu(t,x)\pa_\mu\phi$
and quadratic terms $h^{\mu\nu}(t, x)\pa_\mu\phi\pa_\nu\phi$ where
the functions $\Phi(t, x)$, $L^\mu(t, x)$, $h^{\mu\nu}(t, x)$ decay
rather weakly and the constants $B^{\mu\nu}$ satisfy the null
condition. We show the small
data global existence result by using the new approach developed by M. Dafermos and I. Rodnianski. In particular, we prove the
global stability result under weaker assumptions than those imposed
by S. Alinhac.
\end{abstract}
\section{Introduction}
In this paper, we study the behavior of solutions to the Cauchy
problem
\begin{equation}
\label{THEWAVEEQ0}
\begin{cases}
 \Box w=\mathcal{N}(\pa w), \\
 w(0,x)=\Phi_0(x)+\ep \phi_0(x), \quad \pa_t w(0,x)= \Phi_1(x)+\ep \phi_1(x)
\end{cases}
\end{equation}
in Minkowski space with initial data $\Phi_i(x)$, $\phi_i(x)\in
C_0^{\infty}(\mathbb{R}^3)$. The nonlinearity $\mathcal{N}(\pa w)$
is assumed to satisfy the null condition, that is,
$\mathcal{N}(0)=D\mathcal{N}(0)=0$ and the quadratic part of
$\mathcal{N}(\pa w)$ is $A^{\a\b}\pa_\a w\pa_\b w$ with constant
coefficients $A^{\a\b}$ such that $A^{\a\b}\xi_\a \xi_\b=0$ whenever
$\xi_0^2=\xi_1^2+\xi_2^2+\xi_3^2$.

\bigskip

In \cite{alinhac-sls}, S. Alinhac studied the stability of large
solutions to the quasilinear wave equations
\begin{equation*}
\begin{cases}
\Box w+g^{\a\b\gamma}\pa_{\gamma}w\cdot \pa_{\a\b}w=0,\\
w(0,x)=\Phi_0(x), \quad \pa_t w(0,x)= \Phi_1(x)
\end{cases}
\end{equation*}
in Minkowski space, where $g^{\a\b\gamma}$ are constants satisfying
the null condition ( see \cite{klnull}). More specifically, starting
with a global solution $\Phi(t, x)\in
C^\infty(\mathbb{R}^{3+1})$, consider the Cauchy problem with perturbed initial data
$(\Phi(0, x)+\ep\phi_0, \pa_t\Phi(0, x)+\ep\phi_1)$. He
showed that if $\Phi$ satisfies the condition
\begin{equation}
\label{alinhaccond}
|g^{ij\gamma}\pa_{\gamma}\Phi\cdot\xi_i\xi_j|\leq \a_0
\sum\limits_{i=1}^{3}|\xi_i|^2,\quad \sum\limits_{|k|\leq
7}|\Gamma^k\pa \Phi|\leq C_0 (1+t)^{-1}(1+|r-t|)^{-\f12}
\end{equation}
for some positive constants $\a_0<1$ and $C_0$, then the solution exists globally and is close to $\Phi$. Here $\Gamma$
denotes the collection of Lorentz vector fields,
 see \cite{klinvar}. The problem of global stability of $\Phi$ can be reduced to the following small data
 Cauchy problem
\begin{equation*}
\begin{cases}
\Box \phi+g^{\a\b\gamma}\pa_{\gamma}\phi\cdot
\pa_{\a\b}\phi+g^{\a\b\ga}\pa_\a \Phi\pa_{\b\ga}\phi+g^{\a\b\ga}
\pa_{\b\ga}\Phi\pa_\a\phi=0,\\
\phi(0,x)=\ep \phi_0(x), \quad \pa_t \phi(0,x)= \ep \phi_1(x)
\end{cases}
\end{equation*}
 with given function $\Phi$ satisfying condition \eqref{alinhaccond}. The approach in \cite{alinhac-sls}
relies on the vector field method. In particular, S. Alinhac used the scaling vector field $S=t\pa_t +r\pa_r$
 with weights
 growing in $t$ as commutators. The use of such weighted vector
 fields requires one to make the rather strong assumption that the given
 solution $\Phi(t, x)$ decays uniformly in time $t$ as in \eqref{alinhaccond}.

\bigskip

In this paper, we use the approach developed in \cite{newapp}, \cite{yang1} to
treat the problem of global stability of solutions to nonlinear wave equations. we use a new
method for proving decay for linear problem, developed by M.
Dafermos and I. Rodnianski in \cite{newapp}. This new method avoids the use of vector fields containing positive weights in $t$,
e.g., $S=t\pa_t+r\pa_r$, $L_i=x_i\pa_t+t\pa_i$. Traditionally, the vector fields from the set $\Ga$, including $S$, $L_i$, are 
used as multipliers or commutators. The new approach only commutes the equation with $\pa_t$, $x_i\pa_j-x_j\pa_i$ and allows us 
to obtain the stability results under conditions on $\Phi$ weaker than those
imposed by inequalities \eqref{alinhaccond}. We now describe the
assumptions and the main results.

\bigskip

We denote $(\pa_t, \pa_{x_1}, \pa_{x_2}, \pa_{x_3})$ by $\pa$ and
$(\pa_{x_1}, \pa_{x_2}, \pa_{x_3})$ by $ \nabla$ under the
coordinates $(t, x_1, x_2, x_3)$. We also use the null coordinates
$u=\frac{t-r}{2}$, $v=\frac{t+r}{2}$ defined by the standard polar
coordinates $(t, r, \om)$ in Minkowski space. The vector fields,
used as commutators , are
\[
 Z=\{ \Om_{ij}, \pa_t=T\},\quad \Om_{ij}=x_i\pa_j-x_j\pa_i,
\]
where Greek indices run from 0 to 3 while the Latin indices run from
1 to 3.

\begin{Def}
\label{weakwave}
 We call $\Phi\in C^{\infty}(\mathbb{R}^{3+1})$ a $(\delta, \a, t_0, R_1, C_0)$-\textbf{weak wave} if
 \begin{enumerate}[(i):]
                   \item  $|\pa\Phi(t, x)|\leq C_0, \quad t\leq
                   t_0$,
                   \item $|\pa\Phi(t, x)|\leq C_0(1+r)^{-\f12}(1+(t-|x|)_+)^{-\f12-4\a},
                   \quad |x|\geq R_1,\quad t\geq t_0$,
                   \item $|\pa_v \Phi(t, x)|\leq C_0 (1+r)^{-1-3\a}, \quad |x|\geq R_1 , \quad t\geq t_0
                   $,
                   \item $|\pa \Phi(t, x)|\leq \delta\a(1+r)^{-1-\a}, \quad |x|\leq R_1 , \quad t\geq t_0$
                 \end{enumerate}
for some positive constants $\delta, \a, t_0, R_1, C_0$, where
$\pa_v=\pa_t+\pa_r$. Here $(t-|x|)_+=\max\{0, t-|x|\}$. Without loss
of generality, we assume $\a\leq \frac{1}{4}$ and $R_1\leq t_0$.
\end{Def}
\begin{remark}
Solution of a free wave equation in Minkowski space $\Box \Phi=0$
with compactly supported initial data decays uniformly in time $t$ and is always a $(\delta, \a, t_0,
R_1, C_0)$-weak wave for some constants $\delta, \a, t_0, R_1, C_0$
. We remark here that a
weak wave does not have to decay uniformly in time $t$ in the
cylinder $\{(t, x)||x|\leq R_1\}$.
\end{remark}

In our argument, we estimate the decay of the solution with respect
to the foliation $\Si_{\tau}$, defined as
\begin{align*}
&S_\tau:=\{u=u_\tau, v\geq v_\tau\},\\
&\Si_\tau:=\{t=\tau, r\leq R\}\cup S_\tau,
\end{align*}
where $u_\tau=\frac{\tau-R}{2}$, $v_\tau=\frac{\tau+R}{2}$. The
radius $R$ is a to-be-fixed constant. The corresponding energy flux
is
$$ E[\phi](\tau):=\int_{r\leq R}|\pa\phi|^2dx +
\int_{S_\tau}\left(|\pa_v\phi|^2+|\nabb\phi|^2\right)r^2dvd\om,
$$
where $\nabb$ denotes the induced covariant derivative on the sphere
of radius $r$. We denote
\[
 E_{0}=\sum\limits_{|k|\leq 4}\int_{\mathbb{R}^{3}}|\pa Z^k\tilde\phi(0,
 x)|^2dx,
\]
where $\tilde\phi(0, x)=\phi_0(x)$, $\pa_t\tilde\phi(0, x)=\phi_1(x)$. Here $k$ stands for multiple indices,
 namely if $k=(k_1, k_2)$, then $Z^k=\Om^{k_1}T^{k_2}$, $\Om=
\Om_{ij}$. And if $k\leq \tilde{k}$, then $k_1\leq \tilde{k}_1$,
$k_2\leq \tilde{k}_2$.

In addition to the assumption that the nonlinearity $\mathcal{N}(\pa
w)$ satisfies the null condition, we assume $\mathcal{N}$ is smooth
and
\[
\mathcal{N}(\pa\Phi+\pa\phi)=\mathcal{N}(\pa\Phi)+A^{\mu\nu}\pa_\mu\Phi\pa_\nu\phi+\mathcal{N}^{\mu}(\pa\Phi)\pa_\mu\phi
+\mathcal{N}^{\mu\nu}(\pa\Phi)\pa_\mu\phi\pa_\nu\phi+O(|\pa\phi|^3),
\]
when $\pa\phi$ is small. The coefficients
$\mathcal{N}^\mu(\pa\Phi)$, $\mathcal{N}^{\mu\nu}(\pa\Phi)$ satisfy
\begin{equation}
\label{HH}
\begin{split}
& |Z^\b\mathcal{N}^{\mu}(\pa\Phi)|\leq
C(\pa\Phi)\sum\limits_{|\b'|\leq |\b|}|Z^{\b'}\pa\Phi|^{2+\a_0},\quad \forall |\b|\leq 4,\\
&|Z^\b\mathcal{N}^{\mu\nu}(\pa\Phi)|\leq
C(\pa\Phi)\sum\limits_{|\b'|\leq
|\b|}|Z^{\b'}\pa\Phi|^{\a_0},\quad\forall |\b|\leq 4
\end{split}
\end{equation}
for some positive constant $\a_0$. The constant $C(\pa\Phi)$ depends
only on $\sum\limits_{|\b|\leq4}\|Z^\b \pa\Phi\|_{C^0}$.

We now state our main results.

\begin{thm}
\label{maintheorem} Suppose the nonlinearity $\mathcal{N}(\pa w)$
satisfies the null condition and condition \eqref{HH}. Let $\Phi\in
C^{\infty}(\mathbb{R}^{3+1})$ be a solution of \eqref{THEWAVEEQ0}
when $\ep=0$. Assume $Z^k\Phi$ is $(\delta, \a, t_0, R_1, C_0)$-weak
wave, $\forall |k|\leq 4$. Suppose the initial data $\phi_0(x),
\phi_1(x)$ are smooth and supported in $\{|x|\leq R_0\}$. Then there
exists $\delta_0>0$, depending only on the
  constants $A^{\a\b}$, and $\ep_0>0$, depending on $E_{0}$, $R_0$, $A^{\a\b}$, $\a_0$, $\a$, $t_0$, $R_1$,
  $C_0$,
  such that for any $\delta<\delta_0$, $\ep<\ep_0$, there exists a unique global smooth solution
  $w$ of equation \eqref{THEWAVEEQ0} with the property that $\exists$ positive constant $R$,
  depending on $t_0$, $\a$, $\a_0$, $R_1$, $C_0$, $R_0$, such that for the foliation $\Si_\tau$ with radius $R$,
  the difference $\phi=w-\Phi$ satisfies
\begin{itemize}
\item[(1)] Energy decay
$$E[\phi](\tau)\leq C E_0\ep^2  (1+\tau)^{-1-\f12\a'},\quad \a'=\min\{\frac{\a_0}{6},
\a\}.
$$
\item[(2)] Pointwise decay:
 \begin{align*}
&\qquad\quad|\phi|\leq C\sqrt{E_0}\ep (1+r)^{-1},\\
&\sum\limits_{|\b|\leq 2}|\pa^\b\phi|\leq C \sqrt{E_0}\ep
(1+r)^{-\f12}(1+|t-r+R|)^{-\f12-\frac{\a'}{4}},\quad
\a'=\min\{\frac{\a_0}{6}, \a\},
\end{align*}
\end{itemize}
where $C$ depends on $R$, $\a_0$, $\a$, $t_0$, $R_1$, $C_0$.
\end{thm}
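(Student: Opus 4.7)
Setting $\phi = w - \Phi$, the problem reduces to a small-data global existence problem for
\[
\Box\phi = -A^{\mu\nu}\pa_\mu\Phi\,\pa_\nu\phi - \mathcal{N}^\mu(\pa\Phi)\pa_\mu\phi - A^{\mu\nu}\pa_\mu\phi\,\pa_\nu\phi - \mathcal{N}^{\mu\nu}(\pa\Phi)\pa_\mu\phi\,\pa_\nu\phi + O(|\pa\phi|^3),
\]
with data of size $\ep$ supported in $\{|x|\leq R_0\}$. The argument is a continuity/bootstrap based on the decay ansatz
\[
\sum_{|k|\leq 4} E[Z^k\phi](\tau) \leq C_1 E_0\,\ep^2\,(1+\tau)^{-1-\f12\a'},
\]
which we aim to recover with a strictly smaller constant. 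Since $[\Box,Z]=0$ for every $Z\in\{T,\Om_{ij}\}$, commuting with $Z^k$ yields equations of the same schematic form with $\pa\Phi$ replaced by $Z^{k'}\pa\Phi$; these still obey the weak-wave bounds by the hypothesis that each $Z^k\Phi$ is a $(\delta,\a,t_0,R_1,C_0)$-weak wave. The radius $R$ will be chosen large enough (in terms of $t_0, R_1, C_0, R_0$) that the interior part of $\Si_\tau$ is contained in the region where only condition (iv) of Definition \ref{weakwave} is needed.

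The decay itself will be produced by the $r^p$-weighted method of Dafermos--Rodnianski. I would first apply the $T$-energy identity in the slab bounded by $\Si_\tau$ and $\Si_{\tau'}$: the boundary term on $\Si_\tau$ is exactly $E[\phi](\tau)$, and the bulk errors split into an interior part on $\{r\leq R\}$ and an exterior part on $S_\tau$. The interior error is absorbed by Gronwall using the factor $\delta\a(1+r)^{-1-\a}$ from condition (iv); this is where the smallness of $\delta_0$ (relative to $A^{\a\b}$) enters. Applying the multiplier $r^p\pa_v(r\phi)$ on the null hypersurface $S_\tau$ for $p\in[0,2]$ then yields the $r^p$-weighted hierarchy of inequalities, and a pigeonhole argument in $\tau$ converts this hierarchy into the claimed polynomial decay of $E[\phi](\tau)$.

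The main obstacle is the linear term $A^{\mu\nu}\pa_\mu\Phi\,\pa_\nu\phi$ in the exterior: it carries no smallness, and the bound (ii) on $\pa\Phi$ is by itself not integrable in $\tau$. The null condition on $A^{\mu\nu}$ is essential here. Decomposing into the null frame $\{L=\pa_v,\ \Lb=\pa_u,\ e_A\}$, the identity $A^{\mu\nu}L_\mu L_\nu = 0$ forces every term to contain at least one factor of $\pa_v\Phi$, an angular derivative of $\Phi$, or one of the good components $\pa_v\phi$, $\nabb\phi$. The first two decay strongly by condition (iii), giving $|\pa_v\Phi|\lesssim(1+r)^{-1-3\a}$, while the good components of $\pa\phi$ are precisely those carrying positive $r$-weights in the $r^p$-estimate. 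The resulting joint $r$-- and $\tau$--integrability closes the exterior bulk. The other linear term $\mathcal{N}^\mu(\pa\Phi)\pa_\mu\phi$ is easier by \eqref{HH}, and the quadratic nonlinearities are handled via the standard null-form bilinear bound together with the $\ep$-smallness provided by the bootstrap.

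Finally, the pointwise decay follows from a Klainerman--Sobolev-type inequality adapted to the foliation $\Si_\tau$: the bound $|\phi|\leq C\sqrt{E_0}\,\ep(1+r)^{-1}$ comes from integrating $\pa_v(r\phi)$ along $L$ from $\Si_\tau$ to infinity and applying Sobolev embedding on the spheres after commuting with $\Om_{ij}$, and the higher-order pointwise bounds follow the same way from the $Z^k$-commuted energies. Combined with local existence, this a priori bound yields the global solution with the required decay, provided $\delta_0$ and $\ep_0$ are so small that the bootstrap constant $C_1$ strictly improves.
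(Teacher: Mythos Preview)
Your outline matches the paper's overall strategy: reduce to small data for $\phi=w-\Phi$, check that $\mathcal{N}^\mu(\pa\Phi)$, $\mathcal{N}^{\mu\nu}(\pa\Phi)$ satisfy the hypotheses on $L^\mu$, $h^{\mu\nu}$ in Theorem~\ref{maintheorem2} (after replacing $\a$ by $\a'=\min\{\a_0/6,\a\}$), and then run the Dafermos--Rodnianski $r^p$ hierarchy together with a bootstrap. Two points, however, are not adequately addressed and the first is a real gap.

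You omit the integrated local energy (Morawetz) estimate and, more importantly, its \emph{coupling} with the $r^p$ estimates. In the null decomposition of $A^{\mu\nu}\pa_\mu\Phi\,\pa_\nu\phi$ one of the surviving terms is schematically $\pa_v\Phi\cdot\pa_u\psi$: good decay on the $\Phi$ factor but the bad derivative on $\psi$. After multiplying by $r^p\pa_v\psi$ this leaves $r^{p-1-3\a}|\pa_u\psi||\pa_v\psi|$, and $\pa_u\psi$ carries no positive $r$-weight and is not controlled by the $T$-flux on $S_\tau$. The paper bounds it by Cauchy--Schwarz against a weighted spacetime integral $\int\!\!\int (1+r)^{-1-\a}|\bar\pa\phi|^2$, i.e.\ the Morawetz estimate with multiplier $f(r)\pa_r$. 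But that Morawetz estimate has its own error terms coming from the same linear perturbation, so neither inequality closes on its own under the weak-wave hypotheses. The paper's Proposition~\ref{mainprop} derives the ILE, the energy bound, and the $p=1$, $p=1+2\a$ weighted estimates \emph{simultaneously} and closes them as a system (choosing several auxiliary $\ep_i$ and a threshold $T_0$ to absorb the cross terms). Your sentence ``the resulting joint $r$-- and $\tau$--integrability closes the exterior bulk'' hides exactly this step; without the Morawetz ingredient and the coupling, the argument does not close.

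A smaller point: your pointwise decay in the interior $\{r\le R\}$ cannot come from Sobolev on spheres after commuting with $\Om_{ij}$, since $\Om$ degenerates at the origin. The paper commutes with $T$, writes $\Delta\phi=\pa_{tt}\phi+\text{l.o.t.}$ from the equation, and uses elliptic/Schauder estimates on balls (Lemma~\ref{H2phil} and Corollary~\ref{ptdcinc}) to get the $C^0$ and $C^2$ bounds there. Also, the paper's bootstrap is formulated on the inhomogeneity $D^{2\a}[Z^\b F]_{\tau_1}^{\tau_2}$ rather than directly on $E[Z^k\phi](\tau)$; this is convenient because Proposition~\ref{mainprop} and Proposition~\ref{energydecay} then give the energy decay as an output rather than an assumption.
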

\begin{remark}
The weak decay of $\pa\Phi$ in the spatial direction (
$(1+|x|)^{-\f12}$) excludes general cubic nonlinearities of
$\mathcal{N}(\pa\Phi)$( cubic nonlinearities satisfying the null
condition are allowed). However if condition $(ii)$ in the
definition of \textbf{weak wave} $\Phi$ is improved to
\begin{equation*}
 \sum\limits_{|\b|\leq 4}|\pa Z^{\b}\Phi|\leq
 C_0(1+r)^{-\f12-\a}(1+(t-|x|)_+)^{-\f12-4\a},
\end{equation*}
then it is sufficient to assume
\[
|Z^\b\mathcal{N}^{\mu}(\pa\Phi)|\leq
C(\pa\Phi)\sum\limits_{|\b'|\leq |\b|}|Z^{\b'}\pa\Phi|^{2},\quad
\forall |\b|\leq 4.
\]
This allows any cubic( or higher) nonlinearity of $\mathcal{N}(\pa
w)$.
\end{remark}

\bigskip

Since $\Phi(t, x)$ solves \eqref{THEWAVEEQ0} for $\ep=0$, the
problem of global stability of $\Phi$ is then reduced to the
following small data Cauchy problem
\begin{equation}
\label{THEWAVEEQ}
\begin{cases}
\Box\phi+N(\Phi, \phi)+L(\pa\phi)=F(\pa\phi),\\
\phi(0, x)= \ep\phi_0(x), \quad \phi_t(0, x)=\ep\phi_1(x),
\end{cases}
\end{equation}
where $N(\Phi, \phi)=B^{\a\b}\pa_\a\Phi\cdot\pa_\b\phi$,
$L(\pa\phi)=L^\mu(t, x)\pa_\mu\phi$. The nonlinearity $F(\pa\phi)$
is of the form
\begin{align*}
&F(\pa\phi)=A^{\mu\nu}\pa_\mu
\phi\pa_\nu\phi+Q(\pa\phi)+\textnormal{cubic and higher order terms
of }
\pa\phi,\\
&Q(\pa\phi)=h^{\mu\nu}(t,x)\pa_\mu\phi\pa_\nu\phi.
\end{align*}
Here $A^{\mu\nu}$, $B^{\mu\nu}$ are constants satisfying the null
condition \cite{klnull} and  $\Phi(t, x)$, $L^\mu(t, x)$,
$h^{\mu\nu}(t,x )$ are given functions. For the stability problem,
we have $B^{\mu\nu}=-2A^{\mu\nu}$ and $\phi=w-\Phi$. However, it is
of independent interest to consider the above small data Cauchy
problem with linear terms $N(\Phi, \phi)$, $L^{\mu}\pa_\mu\phi$ and
quadratic terms $h^{\mu\nu}(t, x)\pa_\mu\phi\pa_\nu\phi$ where the
functions $\Phi(t, x)$, $L^\mu(t, x)$, $h^{\mu\nu}(t, x)$ decay
rather weakly, given as follows:

For positive constants $\delta$, $\a$, $t_0$, $R_1$, $C_0$, we
assume $Z^k\Phi$ is $(\delta, \a, t_0, R_1, C_0)$-weak wave,
$\forall |k|\leq 4$ and
\[
|\pa^2 Z^\b \Phi|\leq C_0,\quad \forall |\b|\leq 2.
\]
Similarly, we assume
 \[
 |\pa Z^\b L^\mu|\leq C_0, \quad \forall |\b|\leq 2.
 \]
For $t\leq t_0$, we assume $Z^\b L^\mu(t, x)$, $Z^\b
h^{\mu\nu}(t,x)$ are bounded, that is,
\[
|Z^\b L^\mu(t, x)|+|Z^\b h^{\mu\nu}(t, x)|\leq C_0,\quad \forall
|\b|\leq 4, \quad \forall t\leq t_0.
\]
For $t\geq t_0$, we assume
\[
|Z^{\b}h^{\mu\nu}(t, x)|\leq C_0 (1+|x|)^{-\frac{3}{2}\a},\quad
\forall |\b|\leq 4, \quad \forall t\geq t_0
\]
and $L^\mu(t, x)$ satisfies \textbf{one} of the following two
conditions
\begin{equation}
\label{Lcond1} |Z^{\b}L^\mu (t, x)|\leq \delta \a
(1+|x|)^{-1-3\a},\quad \forall |\b|\leq 4,\quad \forall t\geq t_0
\end{equation}
or
\begin{equation}
\label{Lcond2} |Z^{\b}L^\mu (t, x)|\leq C_0
(1+|x|)^{-1-3\a}(1+(t-|x|)_{+})^{-\a},\quad \forall |\b|\leq 4,\quad
\forall t\geq t_0.
\end{equation}

Theorem \ref{maintheorem} follows from:
\begin{thm}
\label{maintheorem2} Let $\Phi(t, x)$, $L^\mu(t, x)$,
$h^{\mu\nu}(t,x )$ be given smooth functions satisfying the above
conditions. $A^{\mu\nu}$, $B^{\mu\nu}$ are constants satisfying the
null condition. Assume the initial data $\phi_0(x)$, $\phi_1(x)$ are
smooth and supported in $\{|x|\leq R_0\}$. Then there exists
$\delta_0>0$, depending only on the
  constants $B^{\mu\nu}$, and $\ep_0>0$, depending on
$E_{0}$,$R_0$, $A^{\mu\nu}$, $B^{\mu\nu}$, $\a, t_0, R_1, C_0$, such
that for any $\delta<\delta_0$, $\ep<\ep_0$, there exists a unique
global smooth solution
  $\phi$ of the equation \eqref{THEWAVEEQ} with the property that $\exists$ positive constant $R$,
  depending on $t_0, \a, R_1, C_0, R_0$, such that for the foliation $\Si_\tau$ with radius $R$,
  the solution $\phi$ satisfies
\begin{itemize}
\item[(1)] Energy decay
$$E[\phi](\tau)\leq C E_0\ep^2  (1+\tau)^{-1-\f12\a}.
$$
\item[(2)] Pointwise decay
 \begin{align*}
&\qquad\quad|\phi|\leq C\sqrt{E_0}\ep (1+r)^{-1},\\
&\sum\limits_{|\b|\leq 2}|\pa^\b\phi|\leq C \sqrt{E_0}\ep
(1+r)^{-\f12}(1+|t-r+R|)^{-\f12-\frac{\a}{4}},
\end{align*}
\end{itemize}
where $C$ depends on $R$, $\a_0$, $\a, t_0, R_1, C_0$.
\end{thm}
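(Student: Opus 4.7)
The plan is a continuity/bootstrap argument built on the $r^p$-weighted energy method of Dafermos and Rodnianski. I fix a sufficiently large radius $R$ (depending on the given constants), work with the foliation $\Si_\tau$, and bootstrap
$$\sum_{|k|\leq 4}E[Z^k\phi](\tau)\leq C_1 E_0\ep^2(1+\tau)^{-1-\a/2}$$
together with the pointwise bounds claimed in the theorem, obtained from a Sobolev embedding on $\Si_\tau$. Local existence provides these bounds on a short initial interval, and the task is to recover them with a strictly smaller constant on any maximal interval of existence, for $\delta$ and $\ep$ sufficiently small.

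I would first establish the basic energy identity by using $T=\pa_t$ as a multiplier: the divergence of the associated current controls $E[\phi](\tau)$ in terms of $E[\phi](0)$ plus spacetime integrals coming from the perturbations $N(\Phi,\phi)$, $L(\pa\phi)$ and $F(\pa\phi)$. The quadratic $h^{\mu\nu}\pa\phi\pa\phi$ is handled via $|Z^\b h^{\mu\nu}|\les(1+r)^{-3\a/2}$ combined with the pointwise bootstrap, yielding a $\tau$-integrable error after a Hardy inequality. The term $L^\mu\pa_\mu\phi$ is absorbed by smallness of $\delta$ under \eqref{Lcond1}, or by the extra $(t-|x|)_+^{-\a}$ factor under \eqref{Lcond2}. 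The null quadratic $A^{\mu\nu}\pa_\mu\phi\pa_\nu\phi$ is treated by the usual null-frame decomposition, which always pairs a good derivative ($\pa_v\phi$ or $\nabb\phi$) with a bad one.

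Decay itself is produced by the $r^p$-weighted multiplier $r^p(\pa_t+\pa_r)$ applied on $S_\tau$ with $p\in(0,2-\a)$. After integration by parts this yields a hierarchy whose top level ($p=2-\a$) is bounded in terms of the data, and whose lower levels, together with a pigeonhole argument, give $E[\phi](\tau)\les(1+\tau)^{-1-\a/2}$. Commuting the equation with $Z^k=\Om^{k_1}T^{k_2}$ for $|k|\leq 4$ yields the higher-order analogues; because $Z$ contains neither the scaling $S$ nor the boosts $L_i$, $[Z,\Box]=0$ and no $t$-weights appear in commutators. This is exactly why the weaker weak-wave hypotheses on $\Phi$ (in particular the mere $(1+r)^{-1/2}$ spatial decay in the wave zone) suffice. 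Pointwise decay for $\phi$ and its first two derivatives then follows from a Klainerman--Sobolev-type trace estimate on $\Si_\tau$, recovering the rates $(1+r)^{-1}$ and $(1+r)^{-1/2}(1+|t-r+R|)^{-1/2-\a/4}$.

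The main obstacle is handling the linear term $N(\Phi,\phi)=B^{\a\b}\pa_\a\Phi\pa_\b\phi$ after commutation. In the wave zone $|\pa\Phi|$ only decays like $(1+r)^{-1/2}(1+(t-r)_+)^{-1/2-4\a}$, so $\pa\Phi\cdot\pa\phi$ is far too large to be absorbed directly into the energy. One must exploit the null condition on $B^{\a\b}$ to decompose
$$B^{\a\b}\pa_\a\Phi\pa_\b\phi=c_1\pa_v\Phi\cdot\pa_u\phi+c_2\pa_u\Phi\cdot\pa_v\phi+(\text{angular terms}),$$
so that the slowly-decaying $\pa_u\Phi$ is always paired with the good $\pa_v\phi$ or $\nabb\phi$, while the slowly-decaying outgoing derivative of $\phi$ is paired with $\pa_v\Phi$, which by property (iii) of the weak-wave definition carries the strong extra decay $(1+r)^{-1-3\a}$. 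This extra factor is precisely what absorbs the $\Phi$ coefficient into the $r^p$-weighted estimate on $S_\tau$. In the interior cylinder $\{|x|\leq R_1\}$, property (iv) gives $|\pa\Phi|\leq\delta\a(1+r)^{-1-\a}$, so choosing $\delta<\delta_0$ small allows this contribution to be absorbed by a Gronwall argument. Once $N(\Phi,\phi)$ is under control, the remaining nonlinear errors close routinely for $\ep<\ep_0$ small enough, and the bootstrap is closed.
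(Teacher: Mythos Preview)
Your outline is broadly on target and follows the paper's strategy, but there are two substantive omissions that the paper treats as essential.

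First, you never mention the integrated local energy (Morawetz) estimate. The $r^p$ multiplier $r^p\pa_v$ lives only on the null piece $S_\tau$ of the foliation; it gives you nothing in the interior cylinder $\{r\leq R\}$. To turn the $r^p$ hierarchy into decay of the full energy $E[\phi](\tau)$ you need a separate estimate controlling $\int_{\tau_1}^{\tau_2}\int_{\Si_\tau}(1+r)^{-1-\a}|\bar\pa\phi|^2$, obtained from the multiplier $f(r)\pa_r$ with $f=\beta-\beta(1+r)^{-\a}$. More importantly, the paper's main technical point is that under the weak-wave hypotheses these two estimates \emph{cannot be closed independently}: the term $r^p|\pa_v\Phi||\pa\psi||\pa_v\psi|$ in the $r^p$ identity forces you to borrow a $\tau$-weighted integrated local energy, and conversely the null-form error in the Morawetz estimate produces terms like $\bar G[1+2\a,\a]$ that only the $r^p$ hierarchy controls. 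Proposition~\ref{mainprop} is precisely this coupled system, and your plan should make the coupling explicit rather than treating the two mechanisms as sequential.

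Second, your invocation of a ``Klainerman--Sobolev-type trace estimate on $\Si_\tau$'' for pointwise decay will not work as stated in the interior cylinder. The usual Klainerman--Sobolev inequality requires the full set $\{S,L_i,\Om_{ij},\pa\}$; since you deliberately avoid $S$ and $L_i$, commutation with $\Om$ and $T$ alone does not give pointwise control near $r=0$ (the $\Om$'s degenerate there). The paper handles this by commuting with $T=\pa_t$ up to second order and then applying interior elliptic estimates for $\Delta\phi=\pa_{tt}\phi+\cdots$ to recover $H^2$ (and then $C^{1/2}$, $C^{2,1/2}$) control on $\{r\leq R/2\}$; see Lemma~\ref{H2phil} and Corollary~\ref{ptdcinc}. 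Without this step you cannot close the bootstrap on $\sum_{|\b|\leq 2}|\pa^\b\phi|$ inside the cylinder, and in particular cannot control the cubic and $h^{\mu\nu}$ terms there.
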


\bigskip

\begin{remark}
Notice that $\a$ can be arbitrarily small. The decay assumptions on
$L^\mu(t, x)$( condition \eqref{Lcond1}) and $h^{\mu\nu}(t, x)$ are
sharp in the sense that there exists soliton solution( independent
of time t) to the linear wave equation if $L^\mu(t, x)$ behaves like
$(1+|x|)^{-1}$ and any nontrivial $C^3$ solution of the equation
$$\Box \phi=\phi_t^2$$
with compactly supported initial data blows up in finite time
\cite{johnblowup}.
\end{remark}
\begin{remark}
We can also consider equation \eqref{THEWAVEEQ} with zeroth order
linear term $L_0(t, x) \phi$, leading to the same conclusion
provided that $L_0(t, x)$ decays like $(1+|x|)^{-3-a}$. Hence for
the stability problem of large solution( Theorem \ref{maintheorem}),
specific dependence on $w$ of the nonlinearity $\mathcal{N}(w)$ is
also allowed.
\end{remark}

\begin{remark}
For simplicity, we consider the equations in Minkowski space.
However, as in \cite{yang1}, the same conclusion holds on curved
background $(\mathbb{R}^{3+1}, g)$ with metric $g$ merely $C^1$
close to the Minkowski metric and coinciding with the Minkowski
metric outside the cylinder $\{(t, x)||x|\leq R\}$.
\end{remark}
\begin{remark}
 It is not necessary to require that the initial data have compact support. The general assumption
  on the initial data can be that the following quantity
\[
 \sum\limits_{|k|\leq 4}\int_{\mathbb{R}^{3}}r^{1+\a}|\pa Z^k\tilde\phi(0, x)|^2dx,
 \quad \tilde\phi(0, x)=\phi_0(x), \quad\pa_t\tilde\phi(0,
 x)=\phi_1(x),
\]
is finite.
\end{remark}

The small data global existence result of nonlinear wave equations
satisfying the null condition in Minkowski space was first obtained
 by D. Christodoulou \cite{ChDNull} and S. Klainerman
\cite{klnull}. The approach of \cite{klnull} used the vector field
method, introduced by S. Klainerman in \cite{klinvar}. Various
applications of the vector field method to nonlinear wave equations
could be found in ~\cite{klmulti}, \cite{gx-lindblad2},
~\cite{sogge-metcalfe-nakamura}, ~\cite{sogge-metcalfe},
~\cite{sideris-multispeed}, ~\cite{soggemulti}
 . In particular, the celebrated global nonlinear stability of Minkowski space
  has been proven by Christodoulou-Klainerman \cite{kcg} and later by Lindblad-Rodnianski \cite{SMigor}.

\bigskip

The main difficulty of considering nonlinear wave equation
\eqref{THEWAVEEQ} with linear terms $B^{\mu\nu}\pa_\mu\Phi(t,
x)\pa_\nu\phi$, $L^\mu(t, x)\pa_\mu\phi$ and quadratic terms
$h^{\mu\nu}(t, x)\pa_\mu\phi\pa_\nu\phi$ is the rather weak decay of
the functions $\Phi(t, x)$, $L^\mu(t, x)$, $h^{\mu\nu}(t, x)$.
Previous works have relied on the fact that these functions decay to
zero uniformly in time $t$, which is not necessary in this context.
In fact, we even allow these functions to stay static( independent
of $t$) in the cylinder $\{(t, x)||x|\leq R_1\}$. Although we
require $\delta$ to be sufficiently small, which is the only
smallness assumption here, $\delta$ depends only on the constants
$B^{\mu\nu}$.

\bigskip

Our argument here is similar to that in ~\cite{yang1}, which relies
on a new approach, developed by M. Dafermos and I. Rodnianski in
~\cite{newapp}. This new approach is a combination of an integrated
local energy
  inequality and a p-weighted energy inequality in a neighborhood of
 the null infinity. However, due to the weak decay
of $\pa\Phi$, we are not able to obtain the integrated local energy
inequality and the p-weighted energy inequalities separately as in
~\cite{yang1}. We thus consider these two inequalities together, see
Proposition \ref{mainprop} in Section 2 for details.

\bigskip

The plan of this paper is as follows: we establish an integrated
energy inequality in the whole space
 time and two p-weighted energy
 inequalities in Section 2. In Section 3, we use Proposition
 ~\ref{mainprop} to obtain the decay of the
 energy as well as the pointwise decay of the solution under appropriate boostrap assumptions;
 in the last two sections, we close
  our boostrap argument and conclude our main theorems.

\textbf{Acknowlegements} The author is deeply indebted his advisor Igor Rodnianski for
suggesting this problem. He thanks Igor Rodnianski for sharing
numerous valuable thoughts as well as many helpful comments on the
manuscript.

\section{Notations and Preliminaries}
In Minkowski space, we recall the energy-momentum tensor
\[
{\mathbb T}_{\mu\nu}[\phi]=\pa_\mu\phi\pa_\nu\phi-\frac12 g_{\mu\nu}\pa^{\gamma}\phi\pa_{\gamma}\phi.
\]
Given a vector field $X$, we define the currents
\[
J^X_\mu[\phi]= {\mathbb T}_{\mu\nu}[\phi]X^\nu, \qquad
K^X[\phi]= {\mathbb T}^{\mu\nu}[\phi]\pi^X_{\mu\nu},
\]
where $\pi^X_{\mu\nu}=\frac12 \mathcal{L}_Xg_{\mu\nu}$ is the deformation tensor of the vector field $X$.
Recall that
\[
D^\mu J^X_\mu[\phi] = X(\phi)\Box\phi+K^X[\phi].
\]
Let $n$ be the unit normal vector field to hypersurfaces, $d\si$ the
induced surface measure. We denote $d\vol$ as the volume form in
Minkowski space. In null coordinates, we define the null infinity
from $\tau_1$ to $\tau_2$ as follows
\begin{equation*}
\mathcal I_{\tau_1}^{\tau_2} :=\{(u,v,\omega)|u_{\tau_1}\leq u \leq
u_{\tau_2}, v=\infty\}.
\end{equation*}
The corresponding energy flux is
$$I[\phi]_{\tau_1}^{\tau_2}:=\left.\int_{\mathcal I_{\tau_1}^{\tau_2}}
\left((\pa_u\phi)^2+|\nabb\phi|^2\right)
r^2dud\om\right|_{v=\infty},
$$
which is interpreted as a limit when $v\rightarrow\infty$. Denote
$$\tilde{E}[\phi](\tau)=E[\phi](\tau)+I[\phi]_{0}^{\tau}.
$$
Here $E[\phi](\tau)$ is defined in the introduction with the foliation $\Si_\tau$.

\bigskip

Taking a vector field $$X=f(r)\pa_r,$$ for some function $f(r)$,
consider the region bounded by the hypersurfaces $\Si_{\tau_1}$ and
$\Si_{\tau_2}$. By Stoke's formula,  we have the identity
\begin{align}
\notag&\quad\int_{{\Sigma}_{\tau_1}}J^X_\mu[\phi]n^\mu d\sigma - \int_{{\Sigma}_{\tau_2}}J^X_\mu[\phi]n^\mu d\sigma-\int_{\mathcal I_{\tau_1}^{\tau_2}}J^X_\mu[\phi]n^\mu d\sigma\\
&=\int_{\tau_1}^{\tau_2}\int_{\Sigma_\tau}D^\mu
J^X_{\mu}[\phi]d\vol=\int_{\tau_1}^{\tau_2}
\int_{\Sigma_\tau}\Box\phi \cdot X(\phi) + K^X[\phi] d\vol,
\label{energyeq}
\end{align}
where
\begin{align*}
&K^X[\phi]=\mathbb{T}^{\mu\nu}[\phi]\pi^X_{\mu\nu}=(\f12 f'+r^{-1}f)
(\pa_t\phi)^2+(\f12 f'-r^{-1}f)(\pa_r\phi)^2-\f12 f'|\nabb\phi|^2.
\end{align*}
We use $f'$ to denote $\pa_r f$ throughout this paper.

Choose another function $\chi$ of $r$, we have the equality
\begin{align*}
 -\chi\pa^{\gamma}\phi\pa_{\gamma}\phi + \f12\Box\chi\cdot\phi^2
 =\f12 D^{\mu}\left(\pa_{\mu}\chi\cdot \phi^2 - \chi\pa_{\mu}\phi^2\right) +
 \chi\phi\Box\phi.
\end{align*}
Add the above equality to both sides of \eqref{energyeq}. Define the
current
\begin{equation*}
\label{mcurent} \tilde{J}_{\mu}^X[\phi]=J_{\mu}^X[\phi] -
\f12\pa_{\mu}\chi \cdot\phi^2 + \f12 \chi\pa_{\mu}\phi^2.
\end{equation*}
Then we get
\begin{align}
\label{menergyeq}
&\int_{{\Sigma}_{\tau_1}}\tilde{J}^X_\mu[\phi]n^\mu d\sigma - \int_{{\Sigma}_{\tau_2}}\tilde{J}^X_\mu[\phi]n^\mu d\sigma-\int_{\mathcal I_{\tau_1}^{\tau_2}}\tilde{J}^X_\mu[\phi]n^\mu d\sigma\\
\notag&=\int_{\tau_1}^{\tau_2}\int_{\Sigma_\tau}
(f\pa_r\phi+\phi\chi)\Box \phi+ (r^{-1}f + \f12 f' - \chi)(\pa_t\phi)^2\\
\notag&\qquad +(\chi-r^{-1}f+\f12 f')(\pa_r\phi)^2+(\chi-\f12
f')|\nabb\phi|^2- \f12\Box\chi\cdot\phi^2.
\end{align}
For $X=T=\pa_t$ in ~\eqref{energyeq}, we have the energy inequality
\begin{equation}
 \label{energyeqT}
\tilde{E}[\phi](\tau_2)\leq
\tilde{E}[\phi](\tau_1)+2\int_{\tau_1}^{\tau_2}\int_{\Si_\tau}|\Box
\phi||\pa_t\phi|d\vol.
\end{equation}
We now fix the radius $R$ of the foliation $\Si_\tau$. First under
the assumptions in Theorem \ref{maintheorem2}, we choose new
positive constants $\a'$, $t_0'$, $R_1'$ such that
\begin{align}
\label{impsmall}
 &|\pa_v Z^k\Phi|\leq \delta\a' (1+|x|)^{-1-3\a'}, \quad t\geq t'_0,\quad |x|\geq
 R_1',\quad |k|\leq 4,\\
 \label{impcond4}
 &|\pa Z^k\Phi|\leq 2\delta \a'(1+|x|)^{-1-\a'}, \quad t\geq t_0', \quad |x|\leq
 R_1',\quad |k|\leq 4,\\
 \label{t0}
 & (1+t_0')^{\a'} \delta \a'\geq C_0.
\end{align}
In face, since $Z^k\Phi$ is $(\delta, \a, t_0, R_1, C_0)$-weak wave
$\forall |k|\leq 4$, choose $\a'=\f12 \a$ and $R_1'$ large enough
such that \eqref{impsmall} holds. Then \eqref{impcond4} and
\eqref{t0} are satisfied if $t_0'$ is sufficiently large. The other
conditions are also satisfied for these new constants $\a'$, $t_0'$,
$R_1'$. Then the radius $R$ can be fixed as
\[
R=t_0'+R_0,
\]
where $R_0$ is the radius of the support of the initial data. To
avoid too many constants, we still use the constants $\a$, $R_1$,
$t_0$ to denote $\a'$, $R_1'$, $t_0'$ respectively in the sequel.

The following several lemmas, which have been proven in
\cite{yang1}, will be used later on.
\begin{lem}
\label{lem1} On $S_\tau\cup\mathcal I_{0}^{\tau}$
\begin{equation*}
\label{sphereinbd} r\int_{\om}|\phi|^2 d\om\leq
\tilde{E}[\phi](\tau).
\end{equation*}
\end{lem}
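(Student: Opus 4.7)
The plan is to prove this as a Hardy-type inequality along the outgoing null generators of $S_\tau$. Fix a point $(u_\tau,v,\om)\in S_\tau$; since the outgoing null direction is $\pa_v$ and $r=v-u_\tau$, I would express $\phi$ at this point as the integral
\[
\phi(u_\tau,v,\om)=-\int_v^\infty \pa_{v'}\phi(u_\tau,v',\om)\,dv'
\]
(assuming $\phi\to 0$ as $v'\to\infty$, see below) and apply Cauchy--Schwarz with weight $(r')^2$. Because $dv'=dr'$ at fixed $u$, the weight $\int_v^\infty(r')^{-2}\,dv'$ collapses to exactly $1/r$, producing the pointwise bound $r\,|\phi(u_\tau,v,\om)|^2\leq \int_v^\infty(r')^2|\pa_{v'}\phi|^2\,dv'$. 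Integrating over $\om$ and comparing with the $S_\tau$-piece of the energy flux $E[\phi](\tau)$ then yields $r\int_\om|\phi|^2\,d\om\leq E[\phi](\tau)\leq \tilde{E}[\phi](\tau)$, which is the stated estimate.

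The one subtle point is justifying $\phi(u_\tau,v',\om)\to 0$ as $v'\to\infty$ along $S_\tau$. Compact support of the initial data in $\{|x|\leq R_0\}$ does not by itself force compact support of $\phi$ on $S_\tau$ (once $\tau\geq R-R_0$ the whole outgoing cone lies inside the forward domain of dependence of the initial support), so one must invoke the standard $1/r$ radiation asymptotics of smooth solutions with compactly supported data, which forces $r\phi^2\to 0$ at null infinity. A calculational alternative that avoids pointwise asymptotics is to use the identity
\[
\pa_v(r\phi^2)=(\pa_v(r\phi))^2-r^2(\pa_v\phi)^2;
\]
integrating from $v$ to $v_1$ and discarding the nonnegative first term gives $(r\phi^2)(v)\leq (r\phi^2)(v_1)+\int_v^{v_1}r^2(\pa_{v'}\phi)^2\,dv'$, after which $v_1\to\infty$ with $(r\phi^2)(v_1)\to 0$ recovers the same estimate, so this step is really the main (and essentially only) obstacle.

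For the null-infinity component $\mathcal I_0^\tau$, the quantity $r\int_\om|\phi|^2\,d\om$ is to be read as the limit $v\to\infty$ along a generator of fixed $u\in[u_0,u_\tau]$; since the Friedlander limit $\lim_{v\to\infty} r\phi$ is finite while $\phi\to 0$, this limit vanishes and the inequality extends to $\mathcal I_0^\tau$ trivially, consistent with the $S_\tau$ bound by continuity in $v$.
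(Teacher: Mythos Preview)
Your argument is correct and is the standard Hardy-type inequality along the outgoing cone. The paper does not give its own proof of this lemma; it simply cites \cite{yang1}, so there is no in-paper argument to compare against. Your Cauchy--Schwarz computation with the $r^{2}$ weight produces exactly the constant~$1$ in the statement, and your identification of the vanishing boundary term $r\phi^2\to 0$ at $v=\infty$ as the only genuine issue is accurate. In the paper's setting (smooth compactly supported data, solutions of linear or semilinear wave equations), the radiation asymptotics you invoke are available, so this is not a gap.

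One minor remark on the $\mathcal I_0^\tau$ piece: your reading that $r\int_\om\phi^2\,d\om$ there is the limit of $r\phi^2$ along an outgoing generator and hence vanishes is fine for solutions with the paper's data. An alternative that does not require knowing the limit a~priori is to integrate in $u$ from $u_0=-R/2$ (where $\phi\equiv 0$ by finite speed of propagation, since $t-r=-R<-R_0$) and apply the same Cauchy--Schwarz with the flux $I[\phi]_0^\tau$ through $\mathcal I_0^\tau$; this gives $r\int_\om\phi^2\,d\om\le I[\phi]_0^\tau\le\tilde E[\phi](\tau)$ directly and is presumably closer to what is in \cite{yang1}, since it naturally produces $\tilde E$ rather than $E$ on the right-hand side. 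Either route is acceptable.
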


\begin{lem}
\label{lem2} If $\phi$ is smooth, then
\begin{equation*}
\label{phiboundH} \int_{r\leq
R}\left(\frac{\phi}{1+r}\right)^2dx+\int_{S_{\tau}}\left(\frac{\phi}{1+r}\right)^2r^2dvd\om\leq
6\tilde{E}[\phi](\tau).
\end{equation*}
\end{lem}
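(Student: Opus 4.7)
The plan is to prove this as a weighted Hardy inequality on $\Sigma_\tau$, exploiting the fact that $\Sigma_\tau$ is naturally parameterized by $r$ on two joined pieces that agree at $r=R$: the flat disk $\{t=\tau,\;0\leq r\leq R\}$ and the outgoing null hypersurface $S_\tau$, on which, at constant $u=u_\tau$, the coordinate $r=v-u_\tau$ ranges over $[R,\infty)$. The key observation is that $\phi$ is continuous across the interface $r=R$, so boundary contributions from integration by parts on the two pieces will cancel, even though the ``radial derivative'' means $\partial_r|_{t=\tau}$ on the disk and $\partial_v|_{u=u_\tau}$ on the cone.

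First I would fix an angular direction $\omega$ and apply the pointwise identity
\[
\partial_r\!\left(\tfrac{r^{2}}{1+r}\phi^{2}\right)=\tfrac{r^{2}+2r}{(1+r)^{2}}\phi^{2}+\tfrac{2r^{2}}{1+r}\phi\,\partial_r\phi
\]
separately on $[0,R]$ and $[R,\infty)$. At $r=0$ the boundary term vanishes by the $r^2$ factor; at $r=\infty$ it vanishes by the compact support of $\phi$ on $\Sigma_\tau$ (finite propagation speed applied to the compactly supported data), with the general finite-energy case handled by density. Adding the two identities, the boundary contributions at $r=R$ cancel because of continuity of $\phi$, giving
\[
\int_{0}^{\infty}\tfrac{r^{2}+2r}{(1+r)^{2}}\phi^{2}\,dr=-\int_{0}^{R}\tfrac{2r^{2}}{1+r}\phi\,\partial_r\phi\,dr-\int_{R}^{\infty}\tfrac{2r^{2}}{1+r}\phi\,\partial_v\phi\,dr.
\]
Using $r^{2}\leq r^{2}+2r$ on the left and Young's inequality on the right (writing $\tfrac{r^{2}}{1+r}|\phi||\cdot|=\tfrac{r|\phi|}{1+r}\cdot r|\cdot|$ and absorbing a fraction of the weighted $\phi^{2}$ integral back into the left-hand side) yields the one-dimensional Hardy-type bound
\[
\int_{0}^{\infty}\tfrac{r^{2}\phi^{2}}{(1+r)^{2}}\,dr\;\leq\;C\!\left(\int_{0}^{R}r^{2}(\partial_r\phi)^{2}\,dr+\int_{R}^{\infty}r^{2}(\partial_v\phi)^{2}\,dr\right),
\]
where $C$ can be tuned by the choice of $\epsilon$ in Young's inequality.

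To conclude, I integrate over $\omega$: the left side becomes exactly $\int_{r\leq R}(\phi/(1+r))^{2}\,dx+\int_{S_\tau}(\phi/(1+r))^{2}r^{2}\,dv\,d\omega$, while the right side is controlled by the energy, since $\int_{r\leq R}(\partial_r\phi)^{2}\,dx\leq \int_{r\leq R}|\partial\phi|^{2}\,dx$ and $\int_{S_\tau}(\partial_v\phi)^{2}r^{2}\,dv\,d\omega$ is one of the terms in $E[\phi](\tau)$; together these give $\leq 6\,\tilde{E}[\phi](\tau)$ for an appropriate choice of $\epsilon$. The only real obstacle is the boundary justification at spatial infinity, which is essentially trivial in the setting of the paper because of the compactly supported initial data; everything else is a routine integration by parts adapted to the piecewise structure of $\Sigma_\tau$.
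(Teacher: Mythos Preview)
Your approach is correct and is the natural Hardy-type argument one would expect; the paper itself does not give a proof here but defers to \cite{yang1}. There is, however, one point you should tighten: your claim that the boundary term at $r=\infty$ vanishes ``by the compact support of $\phi$ on $\Sigma_\tau$'' is not right on the null piece $S_\tau$. Finite speed of propagation gives $\phi=0$ only where $u<-R_0/2$, so once $\tau\geq R-R_0$ the solution is generally nonzero along the \emph{entire} outgoing cone $S_\tau$, all the way to null infinity.

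The fix is immediate and in fact explains the constant $6$. Integrate your identity over $\omega$ first and keep the boundary term at $r=V$:
\[
\int_{\omega}\int_0^V\frac{r^2+2r}{(1+r)^2}\phi^2\,dr\,d\omega
=\int_\omega\frac{r^2}{1+r}\phi^2\Big|_{r=V}\,d\omega
-\int_\omega\int_0^V\frac{2r^2}{1+r}\phi\,\partial\phi\,dr\,d\omega,
\]
where $\partial=\partial_r$ for $r\leq R$ and $\partial=\partial_v$ for $r\geq R$. By Lemma~\ref{lem1} the boundary contribution is bounded by $\int_\omega r\phi^2\,d\omega\big|_{r=V}\leq\tilde E[\phi](\tau)$, uniformly in $V$. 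Now apply Young with $\epsilon=\tfrac12$ to the cross term and absorb as you did:
\[
\tfrac12\int_\omega\int_0^V\frac{r^2}{(1+r)^2}\phi^2\,dr\,d\omega\leq \tilde E[\phi](\tau)+2\,E[\phi](\tau)\leq 3\,\tilde E[\phi](\tau),
\]
and let $V\to\infty$. This gives exactly the stated constant $6$ without needing any decay of $\phi$ at null infinity. Your density remark is a valid alternative route, but invoking Lemma~\ref{lem1} is cleaner and matches the paper's setup.
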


\begin{cor}
 \label{cor1}
In the exterior region $r\geq R$
\begin{equation*}
\label{phipsieq}
\left|\int_{S_\tau}|\pa_v(r\phi)|^2dvd\om-\int_{S_\tau}\phi_v^2
\quad r^2dvd\om \right|\leq 2\tilde{E}[\phi](\tau).
\end{equation*}
\end{cor}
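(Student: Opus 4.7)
The plan is to reduce the difference $|\pa_v(r\phi)|^2 - r^2 \phi_v^2$ to a total $v$-derivative, then use the fundamental theorem of calculus in $v$ along the outgoing null hypersurface $S_\tau = \{u = u_\tau,\ v \geq v_\tau\}$, and finally control the two resulting boundary terms (one on the inner sphere $\{r = R\}$ and one on null infinity) with Lemma \ref{lem1}.

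First I would note that since $v = (t+r)/2$ so that $\pa_v = \pa_t + \pa_r$ satisfies $\pa_v r = 1$, we have
\[
\pa_v(r\phi) = r\,\pa_v\phi + \phi.
\]
Squaring and subtracting gives
\[
|\pa_v(r\phi)|^2 - r^2\phi_v^2 = 2r\phi\,\pa_v\phi + \phi^2 = \pa_v\bigl(r\phi^2\bigr).
\]
This algebraic identity is the entire content of the calculation; the rest is just bookkeeping.

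Next I integrate this identity in $v$ over $S_\tau$ against $d\om$. Since the integrand is a perfect $\pa_v$-derivative, we obtain
\[
\int_{S_\tau}\bigl(|\pa_v(r\phi)|^2 - \phi_v^2\,r^2\bigr)\,dv\,d\om
= \int_\om \Bigl[r\phi^2\Bigr]_{v = v_\tau}^{v = \infty} d\om
= \lim_{v\to\infty} \int_\om r\phi^2\,d\om - R\int_\om \phi^2\bigr|_{v=v_\tau}\,d\om,
\]
using that $r = v - u_\tau = R$ when $v = v_\tau$.

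Finally, both boundary terms are controlled by Lemma \ref{lem1}. The inner boundary lies on $\Si_\tau$ at $r = R$, which is in $S_\tau$, so $R\int_\om \phi^2 d\om \leq \tilde E[\phi](\tau)$. The outer boundary is on $\mathcal I_0^\tau$ (the portion of null infinity up to $\tau$), where Lemma \ref{lem1} again bounds $\lim_{v\to\infty} r\int_\om \phi^2 d\om \leq \tilde E[\phi](\tau)$. Combining these two bounds by the triangle inequality gives the claimed estimate with constant $2$. The only conceptual step is recognizing the perfect derivative structure; once that is in hand the estimate follows immediately from the previous lemma, so there is no real obstacle.
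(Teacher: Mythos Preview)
Your proof is correct and is exactly the natural argument: the identity $|\pa_v(r\phi)|^2 - r^2(\pa_v\phi)^2 = \pa_v(r\phi^2)$ followed by integration in $v$ and Lemma~\ref{lem1} on the two boundary spheres. The paper does not include a proof of this corollary (it is cited from \cite{yang1}), but the same identity and boundary evaluation appear verbatim in the proof of Lemma~\ref{lgbarE}, so your approach coincides with the paper's method.
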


\begin{lem}
\label{prop1} Suppose $f$ and $\chi$ satisfy $$|f|\leq C_1,
\quad|\chi|\leq \frac{C_1}{1+r}, \quad|\chi'|\leq
\frac{C_1}{(1+r)^{2}}$$ for some constant $C_1$, then
\begin{equation*}
\label{bdrycontrol}
\left|\int_{\Sigma_\tau}\tilde{J}_{\mu}^X[\phi]n^{\mu}
d\si\right|\leq 6C_1\tilde{E}[\phi](\tau).
\end{equation*}
\end{lem}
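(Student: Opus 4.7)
The plan is to split $\Si_\tau=\{t=\tau,r\leq R\}\cup S_\tau$ into its flat and null pieces and estimate $\tilde J^X_\mu[\phi]n^\mu$ separately on each. Since $X=f(r)\pa_r$ and $\chi=\chi(r)$, an explicit computation unpacks the modified current into three structurally distinct pieces: a quadratic derivative term $J^X_\mu n^\mu$, the cross term $\chi\phi\,\pa_\mu\phi$ coming from $\tfrac12\chi\pa_\mu\phi^2$, and the zero-order term $-\tfrac12\chi'\phi^2$ (note that $\pa_t\chi=0$ kills this last term on the flat piece). The hypothesis $|f|\leq C_1$, $|\chi|\leq C_1/(1+r)$, $|\chi'|\leq C_1/(1+r)^2$ then reduces the estimate to controlling pointwise quantities of the form $|\pa\phi|^2$, $|\phi||\pa\phi|/(1+r)$, and $\phi^2/(1+r)^2$.

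On the flat piece $n^\mu=\pa_t^\mu$, so $\tilde J^X_\mu n^\mu=f\,\pa_t\phi\,\pa_r\phi+\chi\phi\,\pa_t\phi$; by the Cauchy--Schwarz splitting $\frac{|\phi||\pa\phi|}{1+r}\leq\tfrac12\bigl(\frac{\phi^2}{(1+r)^2}+|\pa\phi|^2\bigr)$, together with the Hardy inequality of Lemma \ref{lem2}, I obtain a bound for $\int_{r\leq R}|\tilde J^X_t|\,dx$ by a constant multiple of $\tilde E[\phi](\tau)$. On $S_\tau$, working in the null coordinates $(u,v,\om)$ with $\pa_v=\pa_t+\pa_r$ (so $\pa_v r=1$ and $\pa_v\chi=\chi'$) and using $\mathbb{T}_{vv}=(\pa_v\phi)^2$ and $\mathbb{T}_{vu}=|\nabb\phi|^2$, the flux takes the form
\[
\tfrac{f}{2}\bigl((\pa_v\phi)^2-|\nabb\phi|^2\bigr)+\chi\phi\,\pa_v\phi-\tfrac12\chi'\phi^2.
\]
The first term is directly bounded in $L^1(S_\tau,r^2dvd\om)$ by a multiple of $E[\phi](\tau)$ via the definition of the $S_\tau$-flux, while the remaining two are controlled by Lemma \ref{lem2} applied to $\phi^2/(1+r)^2$ with measure $r^2dvd\om$. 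Summing the two contributions and using $E\leq\tilde E$ gives the claimed bound by $6C_1\tilde E[\phi](\tau)$.

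The main technical point is the cross term $\chi\phi\,\pa_\mu\phi$: it is neither purely quadratic in $\pa\phi$ nor purely in $\phi$, and it is precisely the term for which the proof requires the weighted Hardy inequality of Lemma \ref{lem2} rather than just a Poincar\'e-type estimate; this is also why the hypothesis demands the pointwise decay $|\chi|\leq C_1/(1+r)$ rather than mere boundedness. Tracking the numerical constants carefully (and using Corollary \ref{cor1} if one wishes to first re-express the $\tfrac12\chi\pa_v\phi^2$ contribution in terms of $\pa_v(r\phi)$) should be enough to recover the clean constant $6$ in the stated bound.
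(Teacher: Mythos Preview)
Your approach is correct and is the natural one: split $\Sigma_\tau$ into its spacelike and null pieces, compute $\tilde J^X_\mu n^\mu$ explicitly on each using $X=f(r)\pa_r$ and $\chi=\chi(r)$, then control the resulting $|\pa\phi|^2$, $\frac{|\phi||\pa\phi|}{1+r}$, and $\frac{\phi^2}{(1+r)^2}$ terms by the energy flux and by Lemma~\ref{lem2}. Note, however, that the paper does not actually give its own proof of this lemma; it is one of the statements said to have been ``proven in \cite{yang1},'' so there is nothing here to compare against beyond confirming that your argument is sound.

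Two minor remarks. First, your explicit identification of the $S_\tau$-integrand as $\tfrac{f}{2}\bigl((\pa_v\phi)^2-|\nabb\phi|^2\bigr)+\chi\phi\,\pa_v\phi-\tfrac12\chi'\phi^2$ is right, and the estimates you outline go through; but if you actually track the constants as written (Cauchy--Schwarz on the cross terms, then Lemma~\ref{lem2} with its constant $6$, plus the direct energy piece), you will land slightly above $6C_1$ rather than exactly at it. Since every subsequent use of this lemma in the paper is only up to implicit constants (absorbed into $\lesssim$), this discrepancy is harmless, but you should either sharpen the splitting or simply state the bound with a universal constant rather than promising the specific~$6$. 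Second, Corollary~\ref{cor1} is not needed here; the direct route via Lemma~\ref{lem2} already handles the $\chi\phi\,\pa_v\phi$ and $\chi'\phi^2$ contributions on $S_\tau$.
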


\begin{remark}
 If $\tilde{E}[\phi](\tau)$ is finite, all the above statements are also valid if we replace
 $\tilde{E}[\phi](\tau)$ with $E[\phi](\tau)$.
\end{remark}

\bigskip

Finally, we denote
\begin{equation*}
 \begin{split}
  &\bar\pa\phi:=(\pa\phi, \frac{\phi}{1+r}), \qquad \overline{\pa_v}\phi:=(\pa_v\phi, \nabb\phi),\\
& g(p, \tau):=\int_{S_\tau}r^{p}|\pa_v\psi|^2dvd\om,
\qquad \bar g(p, \tau):= \int_{S_\tau}r^{p}|\overline{\pa_v}\psi|^2dvd\om,\\
 &G[\b, p]_{\tau_1}^{\tau_2}:=\int_{\tau_1}^{\tau_2}(1+\tau)^{-\b}g(p, \tau)d\tau,
 \qquad \bar G[\b, p]_{\tau_1}^{\tau_2}:=\int_{\tau_1}^{\tau_2}(1+\tau)^{-\b}\bar g(p, \tau)d\tau,\\
&D^\b[F]_{\tau_1}^{\tau_2}:=\int_{\tau_1}^{\tau_2}\int_{\Si_\tau}|F|^2(1+r)^{\b+1}d\vol
\end{split}
\end{equation*}
for $\forall p\geq0,\quad \b\in \mathbb{R}^1$. Here $\psi=r\phi$,
$\pa_v=\pa_t+\pa_r$ and $\bar{\pa}\phi$ is a four dimensional vector
with norm $|\bar{\pa}\phi|^2=|\pa\phi|^2+\frac{\phi^2}{(1+r)^2}$,
similarly for $\overline{\pa_v}\phi$. Throughout this paper, we use
the notation $A\les B$ for the inequality $A\leq C B$ with some
constant $C$, depending on $R$, $A^{\mu\nu}$, $B^{\mu\nu}$, $\a$,
$t_0$, $R_1$, $C_0$.

\section{Weighted Energy Inequalities}
In this section, we use the multiplier method to establish an
integrated local energy inequality and two p-weighted energy
inequalities. The integrated local energy inequality was first
proven by C. S. Morawetz in \cite{mora2}. We follow the method in
\cite{dr3} to obtain the integrated local energy inequality here. In
\cite{newapp}, M. Dafermos and I. Rodnianski introduced the
p-weighted energy inequalities in a neighborhood of null infinity.
These two estimates, which the decay of the energy flux
$E[\phi](\tau)$ relies on, were shown separately in ~\cite{yang1}.
Due to the weak decay of the functions $\Phi(t, x)$, $L^\mu(t, x)$,
we are not able to show these two estimates separately. We hence
consider them together.

\bigskip

Consider the following linear wave equation
\begin{equation}
 \label{LWAVEEQ}
\begin{cases}
\Box\phi+N(\Phi, \phi)+L(\pa\phi)=F,\\
\phi(0, x)=\ep \phi_1(x), \quad\pa_t\phi(0, x)=\ep\phi_1(x),
\end{cases}
\end{equation}
where $ N(\Phi, \phi)=B^{\mu\nu}\pa_\mu\Phi\cdot\pa_\nu\phi$,
$L(\pa\phi)=L^\mu(t,x) \pa_\mu\phi$. We have the following key
estimates.
\begin{prop}
\label{mainprop} Suppose that $\Phi$ is a $(\delta, \a, t_0, R_1,
C_0)$-weak wave for positive constants $\delta, \a, t_0, R_1, C_0$.
Assume the given functions $L^\mu(t,x)$ satisfy
\begin{align*}
|L^\mu(t, x)|\leq C_0,\quad t\leq t_0
\end{align*}
and \textbf{one} of the following two conditions
\[
 |L^\mu(t, x)|\leq \delta \a(1+|x|)^{-1-3\a},\quad t\geq t_0,
\]
or
\[
 |L^\mu (t, x)|\leq C_0
(1+|x|)^{-1-3\a}(1+(t-|x|)_{+})^{-\a},\quad \forall t\geq t_0.
\]
Suppose the constants $\a, t_0, \delta,C_0$ obey the relation
\eqref{t0}. Then there exists $\delta_0>0$, depending only on the
constants $B^{\mu\nu}$, such that for all $ \delta<\delta_0$,
solution $\phi$ of equation \eqref{LWAVEEQ} has the following
properties:
\begin{itemize}
\item[(1)] Integrated local energy estimate
\begin{equation}
 \label{ILE0}
\begin{split}
\int_{\tau_1}^{\tau_2}\int_{\Si_\tau}\frac{|\bar\pa\phi|^2}{(1+r)^{\a+1}}dxd\tau &\les
E[\phi](\tau_1)+D^{\a}[F]_{\tau_1}^{\tau_2}+(1+\tau_1)^{-\a}D^{2\a}[F]_{\tau_1}^{\tau_2}\\
&+(1+\tau_1)^{-1-2\a}\left(g(1+2\a,
\tau_2)+\int_{\tau_1}^{\tau_2}\tau^{\f12\a}D^{2\a}[F]_{\tau}^{\tau_2}d\tau\right).
\end{split}
\end{equation}
\item[(2)] Energy bound
\begin{equation}
 \label{eb}
\begin{split}
E[\phi](\tau_2)&\les E[\phi](\tau_1)+D^{\a}[F]_{\tau_1}^{\tau_2}+(1+\tau_1)^{-\a}D^{2\a}[F]_{\tau_1}^{\tau_2}\\
&+(1+\tau_1)^{-1-2\a}\left(g(1+2\a,
\tau_2)+\int_{\tau_1}^{\tau_2}\tau^{\f12\a}D^{2\a}[F]_{\tau}^{\tau_2}d\tau\right).
\end{split}
\end{equation}
\item[(3)] p-weighted energy inequalities in a neighborhood of null infinity
\begin{align}
\notag
  g(1, \tau_2)+ \int_{\tau_1}^{\tau_2}E[\phi](\tau)d\tau&\les g(1, \tau_1)+\tau_1^{1-\a}
   D^{2\a}[F]_{\tau_1}^{\tau_2}+\tau_1^{1-\a}E[\phi](\tau_1)+\int_{\tau_1}^{\tau_2}(1+\tau)^{-\a}D^{2\a}
   [F]_{\tau_1}^{\tau_2}\\
\label{pWE1}
&+(1+\tau_1)^{-2\a}\int_{\tau_1}^{\tau_2}\tau^{\f12\a}D^{2\a}[F]_{\tau}^{\tau_2}d\tau+(1+\tau_1)^{-2\a}g(1+2\a, \tau_1),\\
\notag
  g(1+2\a, \tau_2)+ \bar G[0, 2\a]_{\tau_1}^{\tau_2}&\les g(1+2\a, \tau_1)+(1+\tau_1)^{1-\a}E[\phi](\tau_1) \\
\label{pWE1a}
&+\int_{\tau_1}^{\tau_2}\tau^{\f12\a}D^{2\a}[F]_{\tau}^{\tau_2}d\tau+(1+\tau_1)^{1+\f12\a}
D^{2\a}[F]_{\tau_1}^{\tau_2}.
\end{align}
\end{itemize}
\end{prop}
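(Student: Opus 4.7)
The three inequalities will be produced in parallel and coupled together. For \eqref{ILE0} I feed \eqref{menergyeq} with $X = f(r)\pa_r$ of Dafermos--Rodnianski type (vanishing for small $r$ and comparable to $(1+r)^{-\a}$ for large $r$) and a companion $\chi(r)\sim (1+r)^{-1-\a}$, chosen so that the bulk on the right of \eqref{menergyeq} is pointwise $\gtrsim |\bar\pa\phi|^2/(1+r)^{1+\a}$ after using Lemma \ref{lem2} to absorb the $\Box\chi\cdot\phi^2$ contribution; Lemma \ref{prop1} dispatches the boundary terms in terms of $E[\phi](\tau_1)$ and $E[\phi](\tau_2)$. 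The energy estimate \eqref{eb} comes from \eqref{energyeqT} with $X=T$. For the $p$-weighted estimates I pass to $\psi = r\phi$, which satisfies $-2\pa_u\pa_v\psi + \lap\psi = r\,\Box\phi$ on each $S_\tau$; multiplying by $r^{p-1}\pa_v\psi$ and integrating over $\bigcup_\tau S_\tau$ produces $g(p,\tau_2) + p\int r^{p-1}|\pa_v\psi|^2 + \frac{2-p}{2}\int r^{p-1}|\nabb\psi|^2$ on the left, and the choices $p=1$ and $p=1+2\a$ yield \eqref{pWE1} and \eqref{pWE1a} respectively.

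In each identity the spacetime source has the form $\int \Box\phi\cdot M\,d\vol$ with $M$ equal to $\pa_t\phi$, $f\pa_r\phi + \chi\phi$, or $r^{p-1}\pa_v\psi$. Substituting $\Box\phi = -N(\Phi,\phi) - L(\pa\phi) + F$ splits the sources into three types. The $F$-terms are handled by Cauchy--Schwarz against $(1+r)^{1+\b}$ with $\b\in\{\a,2\a\}$ matched to the weight of the positive bulk, which produces the $D^\a[F]$, $D^{2\a}[F]$, and $\int_{\tau_1}^{\tau_2}\tau^{\a/2}D^{2\a}[F]_\tau^{\tau_2}d\tau$ contributions on the right. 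For $L(\pa\phi)$, each of \eqref{Lcond1}, \eqref{Lcond2} supplies the pointwise bound $|L(\pa\phi)|\les\delta\a(1+r)^{-1-3\a}|\pa\phi|$ (or the same with an extra $(1+(t-r)_+)^{-\a}$); in all three identities this pairs with $M$ to a quantity absorbable into the positive bulk on the left once $\delta<\delta_0$.

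The main obstacle is the linear term $N(\Phi,\phi) = B^{\mu\nu}\pa_\mu\Phi\,\pa_\nu\phi$. For $t\le t_0$ the crude bound in (i) together with \eqref{t0} confines this regime to a time window of bounded $\delta\a$-weight, and Gronwall closes it. For $t\ge t_0$ the argument splits by region: in the interior $\{|x|\le R_1\}$, condition (iv) gives $|\pa\Phi|\le\delta\a(1+r)^{-1-\a}$, so the $N$-contribution is proportional to $\delta$ and absorbs into the Morawetz bulk, which fixes the threshold $\delta_0$ in terms of $|B^{\mu\nu}|$. In the exterior $\{|x|\ge R_1\}$ the null condition on $B^{\mu\nu}$ is indispensable: it allows one to decompose $B^{\mu\nu}\pa_\mu\Phi\,\pa_\nu\phi$ as a sum of terms each carrying either $\pa_v\Phi$ (which by (iii) enjoys the improved decay $(1+r)^{-1-3\a}$ and is absorbed as in the $L$-argument) or a tangential derivative $\pa_v\phi$, $\nabb\phi$ of the unknown. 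The tangential pieces cannot be absorbed into the Morawetz bulk alone; they must be fed to the $p$-weighted flux $g(1+2\a,\tau_2)$ that appears on the right of \eqref{ILE0} and to the angular bulk $\bar G[0,2\a]$ produced on the left of \eqref{pWE1a}. This is precisely why the three inequalities must be proved as a coupled system, and why \eqref{ILE0} is allowed to carry $g(1+2\a,\tau_2)$ on its right: the $p$-weighted estimate \eqref{pWE1a} reclaims it, while \eqref{pWE1} supplies the $\int E[\phi](\tau)d\tau$ term needed to close the time integration of the absorbed exterior null-form contributions. Choosing $\delta_0$ small depending only on $|B^{\mu\nu}|$ and assembling the three identities then yields the stated bounds.
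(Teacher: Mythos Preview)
Your overall architecture matches the paper's: a Morawetz multiplier $X=f(r)\pa_r$ with $f=\frac{2}{\a}\bigl(1-(1+r)^{-\a}\bigr)$ and $\chi=r^{-1}f$ for the integrated estimate, $X=T$ for the energy, and the $r^{p}\pa_v\psi$ multiplier on $\psi=r\phi$ for the two $p$-weighted inequalities, with the three estimates coupled through the null decomposition of $N$. Your treatment of $N$ in the ILE---smallness of $\pa\Phi$ in the inner region $\{r\le R_1\}$, the null splitting (the paper's Lemma~\ref{lnullQ}) in the outer region---is exactly what the paper does, and your account of how the tangential pieces $|\pa\Phi||\overline{\pa_v}\psi|$ feed the $p$-weighted bulk is correct.

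There is, however, a genuine gap in your handling of the $L(\pa\phi)$ term and of the $|\pa_v\Phi|\,|\pa\psi|$ piece of $N$ inside the $p$-weighted identity. You claim that in all three identities these contributions ``pair with $M$ to a quantity absorbable into the positive bulk on the left once $\delta<\delta_0$.'' This works for the ILE and the $T$-energy, but \emph{fails} for the $p$-weighted estimate: there $M=r^{p}\pa_v\psi$, and both $L(\pa\phi)$ and $\pa_v\Phi\cdot\pa\psi$ carry the full derivative $\pa_u\psi$ of the unknown, which is not controlled by the $p$-weighted bulk $r^{p-1}|\overline{\pa_v}\psi|^2$. No smallness of $\delta$ can rescue this, since the bad derivative does not appear on the left at all; and for small $\a$ the weight $r^{p-1-6\a}$ one would produce by naive Cauchy--Schwarz is far too heavy to be swallowed by the ILE bulk $(1+r)^{-1-\a}$.

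The paper's resolution is more delicate. It Cauchy--Schwarz's these terms against a \emph{$\tau$-weighted} ILE:
\[
\int_{\tau_1}^{\tau_2}\!\int_{S_\tau} r^{p-3\a}|\bar\pa\phi|\,|\pa_v\psi|
\;\les\;\frac{1}{\ep_2}\int_{\tau_1}^{\tau_2}\tau^{1-\a}\!\int_{S_\tau}\frac{|\bar\pa\phi|^2}{(1+r)^{1+\a}}
\;+\;\ep_2\,G[1-\a,\,2p-1-5\a]_{\tau_1}^{\tau_2},
\]
and then controls the first factor by an integration-by-parts identity in $\tau$ (Lemma~\ref{lweightILE}), which converts $\int\tau^{1-\a}\cdot(\text{ILE density})$ into $\tau_1^{1-\a}$ times the ILE on $[\tau_1,\tau_2]$ plus $\int\tau^{-\a}$ times the ILE on $[\tau,\tau_2]$. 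This mechanism is precisely what generates the nested term $\int_{\tau_1}^{\tau_2}\tau^{\a/2}D^{2\a}[F]_{\tau}^{\tau_2}d\tau$ on the right of \eqref{ILE0}--\eqref{pWE1a}, and also produces $\int_{\tau_1}^{\tau_2}E[\phi](\tau)d\tau$ on the right of the $p$-weighted inequalities with a coefficient $\ep_2+\tau_1^{-\a}/\ep_2+\tau_1^{-2\a}/\ep_2^2$. Closing then requires first choosing $\ep_2$ small and then $\tau_1\ge T_0$ large enough, with the finite regime $\tau_1\le T_0$ handled separately by Gronwall. Your proposal correctly anticipates the coupling through the tangential pieces of $N$, but misses that the \emph{transversal} $L$ and $\pa_v\Phi$ pieces in the $p$-weighted identity require this $\tau$-weighted ILE mechanism rather than direct absorption by $\delta$-smallness.
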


\begin{remark}
We mention here that variants and generalizations of estimate \eqref{ILE0} can also be found
in \cite{sogge-metcalfe2}, \cite{sogge-metcalfe}.
\end{remark}

The following corollary will be used to derive the energy decay
estimates when commuting the equation with the vector fields $Z$.
\begin{cor}
 \label{D2aQp}
Assume the given functions $\Phi(t, x)$, $L^\mu(t, x)$ and the
constant $\delta$ satisfy the conditions in the above proposition.
Then for solution $\phi$ of ~\eqref{LWAVEEQ},
 we have estimates for $N=B^{\mu\nu}\pa_\mu\Phi\cdot\pa_\nu\phi$,
$L=L^\mu\pa_\mu\phi$
\begin{equation*}
 \label{D2aQ}
\begin{split}
D^{2\a}[N]_{\tau_1}^{\tau_2}+D^{2\a}[L]_{\tau_1}^{\tau_2}&\les E[\phi](\tau_1)+D^{\a}[F]_{\tau_1}^{\tau_2}+(1+\tau_1)^{-\a}D^{2\a}[F]_{\tau_1}^{\tau_2}\\
&\qquad+(1+\tau_1)^{-1-2\a}\left(g(1+2\a,
\tau_2)+\int_{\tau_1}^{\tau_2}\tau^{\f12\a}D^{2\a}[F]_{\tau}^{\tau_2}d\tau\right).
\end{split}
\end{equation*}
\end{cor}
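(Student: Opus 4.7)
The plan is to estimate $|N|^2$ and $|L|^2$ pointwise and reduce the resulting spacetime integrals to quantities already bounded by \eqref{ILE0}, \eqref{eb}, and \eqref{pWE1a}. Since $B^{\mu\nu}$ satisfies the null condition, a standard null-frame decomposition gives
$$|N| = |B^{\mu\nu}\pa_\mu\Phi\,\pa_\nu\phi| \lesssim |\pa_v\Phi|\,|\pa\phi| + |\pa\Phi|\,|\overline{\pa_v}\phi|,$$
the bad component $|\pa_u\Phi||\pa_u\phi|$ being canceled by the null structure. Combined with the trivial bound $|L|\leq |L^\mu||\pa\phi|$, this splits $D^{2\a}[N]+D^{2\a}[L]$ into a \emph{good-coefficient} piece (coefficient $|\pa_v\Phi|$ or $|L^\mu|$ multiplying $|\pa\phi|^2$) and a \emph{good-derivative} piece (coefficient $|\pa\Phi|$ multiplying $|\overline{\pa_v}\phi|^2$).

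The good-coefficient piece is absorbed into \eqref{ILE0}. In the exterior $\{t\geq t_0,\ r\geq R_1\}$, weak-wave condition (iii) and the assumption on $L^\mu$ give $|\pa_v\Phi|^2+|L^\mu|^2\lesssim (1+r)^{-2-6\a}$, so after multiplication by $(1+r)^{2\a+1}$ the integrand is bounded by $(1+r)^{-1-4\a}|\pa\phi|^2 \leq (1+r)^{-1-\a}|\bar\pa\phi|^2$. In the interior cylinder $\{t\geq t_0,\ r\leq R_1\}$, weak-wave (iv) combined with the boundedness of $r$ reduces the integrand to the same form. In the initial slab $\{t\leq t_0\}$, the uniform bound $|\pa_v\Phi|,|L^\mu|\leq C_0$, the finite time interval, and the finite-speed-of-propagation support of $\phi$ in $\{r\leq R\}$ reduce the contribution to $\int_0^{t_0}E[\phi](\tau)\,d\tau$, itself bounded by $t_0\sup_\tau E[\phi](\tau)$ via \eqref{eb}.

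For the good-derivative piece, weak-wave (ii) gives $|\pa\Phi|^2(1+r)^{2\a+1}\lesssim (1+r)^{2\a}(1+(t-r)_+)^{-1-8\a}$ in the exterior. Converting to null coordinates with $d\vol\sim r^2\,du\,dv\,d\om$, and using Lemma \ref{lem2} and Corollary \ref{cor1} to relate $\int_{S_\tau}r^{2+2\a}|\overline{\pa_v}\phi|^2\,dv\,d\om$ to $\bar g(2\a,\tau)$ plus $E[\phi](\tau)$-type remainders, this contribution reduces to
$$\int_{\tau_1}^{\tau_2}(1+(\tau-R)_+)^{-1-8\a}\bigl(\bar g(2\a,\tau)+E[\phi](\tau)\bigr)\,d\tau.$$
Splitting the $\tau$-integration dyadically at $\tau_1+R$, I bound the piece over $[\tau_1,\tau_1+R]$ directly by $\bar G[0,2\a]_{\tau_1}^{\tau_2}+\sup_\tau E[\phi](\tau)$ (short interval), and the piece over $[\tau_1+R,\tau_2]$ by $(1+\tau_1)^{-1-8\a}\bar G[0,2\a]_{\tau_1}^{\tau_2}$; using $8\a\geq 2\a$ and applying \eqref{pWE1a} and \eqref{eb} then reduces everything to the RHS of \eqref{ILE0}, which is precisely the RHS of the corollary.

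The main obstacle is the exterior good-derivative term: neither the spatial decay of $|\pa\Phi|$ alone nor the integrated local energy estimate alone suffices, and one must simultaneously invoke the p-weighted energy estimate \eqref{pWE1a} (absorbing the $r^{2\a}$ growth from multiplying $|\pa\Phi|^2$ by $(1+r)^{2\a+1}$) and the null decay $(1+(t-r)_+)^{-1-8\a}$ (producing time integrability and the $(1+\tau_1)^{-1-2\a}$ factor). The choice of exponents $3\a$ in (iii), $\f12+4\a$ in (ii), and $1+3\a$ in the $L^\mu$ assumption is precisely tuned so that all the weights balance correctly against the $2\a$-weight in $D^{2\a}$ and the $(1+\tau_1)^{-1-2\a}$ prefactor of $g(1+2\a,\tau_2)$ on the RHS; the rest is routine bookkeeping using Lemma \ref{prop1}.
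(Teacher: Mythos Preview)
Your strategy is the paper's: decompose $N$ by the null condition into a good-coefficient piece ($|\pa_v\Phi|$ or $|L^\mu|$ against $|\pa\phi|^2$), absorbed by \eqref{ILE0}, and a good-derivative piece ($|\pa\Phi|$ against $|\overline{\pa_v}\cdot|^2$), absorbed by \eqref{pWE1a} after extracting the $(1+\tau)^{-1-8\a}$ decay from condition~(ii). The paper packages this slightly differently by passing to $\psi=r\phi$ at the outset via Lemma~\ref{lnullQ}, which produces three terms
\[
|rN|\lesssim |\pa\Phi|\,|\overline{\pa_v}\psi|+|\pa_v\Phi|\,|\pa\psi|+|\pa\Phi|\,|\phi|,
\]
so that a zeroth-order $|\pa\Phi|^2\phi^2$ contribution appears explicitly and is handled by an interpolation estimate for $\int_{S_\tau}r^{2\a}\phi^2\,dvd\om$ between Lemma~\ref{lem2} and \eqref{phi21a}, yielding a term of the form $E[\phi](\tau)^{1-\gamma}g(1+2\a,\tau)^{\gamma}$.

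Your version hides exactly this step in the phrase ``using Lemma~\ref{lem2} and Corollary~\ref{cor1} to relate $\int_{S_\tau}r^{2+2\a}|\overline{\pa_v}\phi|^2\,dvd\om$ to $\bar g(2\a,\tau)$ plus $E[\phi](\tau)$-type remainders.'' The identity $r\pa_v\phi=\pa_v\psi-\phi$ leaves precisely the remainder $\int_{S_\tau}r^{2\a}\phi^2\,dvd\om$, and this is \emph{not} controlled by the lemmas you cite: Lemma~\ref{lem2} bounds only the unweighted $\int_{S_\tau}\phi^2\,dvd\om$, and Corollary~\ref{cor1} compares $|\pa_v\psi|^2$ with $r^2|\pa_v\phi|^2$ only at weight $r^0$. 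With the positive weight $r^{2\a}$ you genuinely need \eqref{phi21a} together with interpolation, which injects $g(1+2\a,\tau)$ into the remainder; after that your $\tau$-integration against $(1+\tau)^{-1-8\a}$ and the appeal to \eqref{pWE1a} close exactly as in the paper. So the route is right, but the specific tools you name do not cover that conversion, and the remainder is not purely an energy term.
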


Under appropriate boostrap assumptions on the nonlinearity $F$, the
above inequalities lead to
 decay of the energy flux $E[\phi](\tau)$. We discuss the integrated local energy inequality and the
 p-weighted energy inequalities separately. And then combine them together to prove the above proposition. The following
  two lemmas will be used frequently. First define
  \[
  A=10\sup\limits_{\mu, \nu} \{|B^{\mu\nu}|\}.
  \]

\begin{lem}
 \label{lnullQ}
Let $N=B^{\mu\nu}\pa_\mu\Phi\pa_\nu\phi$. Then
\begin{equation*}
 \label{nullQ}
|rN|\leq
A\left(|\pa\Phi||\overline{\pa_v}\psi|+|\pa_v\Phi||\pa\psi|+|\pa\Phi||\phi|\right),\quad
\psi=r\phi.
\end{equation*}
\end{lem}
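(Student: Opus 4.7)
The plan is to use the null condition on $B^{\mu\nu}$ to replace an arbitrary derivative of $\phi$ by a ``good'' derivative (i.e.\ one tangential to the outgoing null cone, which is $\pa_v$ or $\nabb$) in one of the two factors of $N$, and then convert from $\phi$ to $\psi=r\phi$.

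First, I would decompose each coordinate derivative in the null frame $\{\pa_v,\pa_u,e_A\}$, writing $\pa_t=\f12(\pa_v+\pa_u)$ and $\pa_r=\f12(\pa_v-\pa_u)$, and expressing each spatial derivative $\pa_i=\omega^i\pa_r+(\text{angular})$. Substituting into $N=B^{\mu\nu}\pa_\mu\Phi\pa_\nu\phi$ and collecting terms yields
\[
N=\tilde B^{vv}\pa_v\Phi\pa_v\phi+\tilde B^{uu}\pa_u\Phi\pa_u\phi+(\text{terms with at least one }\pa_v\text{ or }\nabb),
\]
where $\tilde B^{ab}$ are constants that are linear combinations of $B^{\mu\nu}$. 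The null condition $B^{\mu\nu}\xi_\mu\xi_\nu=0$ for null covectors $\xi$, applied to $\xi=du$, forces $\tilde B^{uu}=0$. Consequently every surviving term contains either $\pa_v\Phi$, $\pa_v\phi$, $\nabb\Phi$, or $\nabb\phi$, and estimating each such product in absolute value by the product of the norms of the full coordinate derivative and the good derivative, I obtain
\[
|N|\le \tfrac{A}{10}\bigl(|\pa\Phi|\,|\overline{\pa_v}\phi|+|\pa_v\Phi|\,|\pa\phi|\bigr),
\]
with the constant absorbed into $A=10\sup|B^{\mu\nu}|$.

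Next, I would multiply through by $r$ and translate to $\psi=r\phi$. Using $\pa_v r=1$ and that $r$ is constant on coordinate spheres, we have $r\pa_v\phi=\pa_v\psi-\phi$ and $r\nabb\phi=\nabb\psi$, so $r|\overline{\pa_v}\phi|\le|\overline{\pa_v}\psi|+|\phi|$. For a general coordinate derivative, $r\pa_\mu\phi=\pa_\mu\psi-\phi\,\pa_\mu r$ with $|\pa r|\le 1$, giving $r|\pa\phi|\le|\pa\psi|+|\phi|$. Substituting these into the previous estimate produces
\[
|rN|\le\tfrac{A}{10}\bigl(|\pa\Phi|(|\overline{\pa_v}\psi|+|\phi|)+|\pa_v\Phi|(|\pa\psi|+|\phi|)\bigr),
\]
and since $|\pa_v\Phi|\le|\pa\Phi|$ the $|\phi|$ terms combine into a single $|\pa\Phi||\phi|$ contribution, yielding the claimed bound after absorbing numerical constants into $A$.

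The only nonroutine ingredient is the null-frame decomposition step, so the main technical point to verify carefully is that the null condition really does eliminate the $\pa_u\Phi\pa_u\phi$ term — everything else is algebraic manipulation. Once that is granted, the translation from $\phi$ to $\psi$ is straightforward given the identities $r\pa_v\phi=\pa_v\psi-\phi$, $r\nabb\phi=\nabb\psi$, $|r\pa\phi|\le|\pa\psi|+|\phi|$.
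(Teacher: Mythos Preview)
Your proof is correct and uses essentially the same ingredients as the paper's: the null condition kills the $\pa_u\Phi\,\pa_u\phi$ term, and the product rule $r\pa_\mu\phi=\pa_\mu\psi-\phi\,\pa_\mu r$ with $|\pa r|\le 1$ produces the $|\pa\Phi||\phi|$ contribution. The only cosmetic difference is the order of operations---the paper first writes $rN=B^{\mu\nu}\pa_\mu\Phi\,\pa_\nu\psi-B^{\mu\nu}\pa_\mu\Phi\,\pa_\nu r\cdot\phi$ and then invokes the null condition on the $\psi$-bilinear term, whereas you apply the null condition at the level of $\phi$ and convert afterward; both routes are equivalent.
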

\begin{proof}
 By our notations
\[
 r N=rB^{\mu\nu}\pa_\mu\Phi\cdot\pa_\nu\phi=B^{\mu\nu}\pa_\mu\Phi\cdot\pa_\nu\psi-B^{\mu\nu}\pa_\mu\Phi\pa_\nu
 r\cdot\phi.
\]
The lemma then follows from the fact that $B^{\mu\nu}$ satisfies the
null condition and the inequality $|\pa r|\leq 1$.
\end{proof}

\begin{lem}[Gronwall's Inequality]
\label{lGronwall} Suppose $A(\tau)$, $E(\tau)$ are nonnegative
functions on $[\tau_1, \tau_2]$ . Assume that $E(\tau)$ is
nondecreasing on this interval and $\b$ is a positive number. If
\[
 A(\tau)\leq  E[\tau]+C\int_{\tau_1}^{\tau}(1+s)^{-1-\b}A(s)ds, \qquad \forall\tau\in[\tau_1,
 \tau_2],
\]
then
\begin{equation*}
 \label{Gronwall}
A(\tau)\leq \exp\left(C{\b}^{-1}(1+\tau_1)^{-\b}\right)E(\tau),
\qquad \forall\tau\in[\tau_1, \tau_2].
\end{equation*}
\end{lem}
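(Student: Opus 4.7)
The strategy is the standard integrating-factor trick for differential (integral) Gronwall inequalities, with the added twist that the weight $(1+s)^{-1-\beta}$ is integrable at infinity (because $\beta>0$), which is exactly what produces the uniform bound $\exp(C\beta^{-1}(1+\tau_1)^{-\beta})$ rather than some growing function of $\tau$.

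First I would introduce the auxiliary quantity
\[
B(\tau):=\int_{\tau_1}^{\tau}(1+s)^{-1-\beta}A(s)\,ds,
\]
so that the hypothesis reads $A(\tau)\leq E(\tau)+CB(\tau)$ and $B$ is absolutely continuous with $B'(\tau)=(1+\tau)^{-1-\beta}A(\tau)$. Substituting the hypothesis into this derivative gives the linear differential inequality
\[
B'(\tau)-C(1+\tau)^{-1-\beta}B(\tau)\leq (1+\tau)^{-1-\beta}E(\tau),\qquad B(\tau_1)=0.
\]

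Next I would multiply by the integrating factor
\[
\mu(\tau):=\exp\!\left(-\int_{\tau_1}^{\tau}C(1+s)^{-1-\beta}ds\right)
=\exp\!\left(-C\beta^{-1}\bigl((1+\tau_1)^{-\beta}-(1+\tau)^{-\beta}\bigr)\right),
\]
so that $(\mu B)'(\tau)\leq \mu(\tau)(1+\tau)^{-1-\beta}E(\tau)$. Using that $E$ is nondecreasing I pull $E(\tau)$ out of the integral on $[\tau_1,\tau]$ and observe the key identity $\mu(s)(1+s)^{-1-\beta}=-C^{-1}\mu'(s)$, which makes the remaining integral telescope to $C^{-1}(1-\mu(\tau))$. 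This yields
\[
B(\tau)\leq C^{-1}E(\tau)\bigl(\mu(\tau)^{-1}-1\bigr),
\]
and substituting back gives $A(\tau)\leq E(\tau)+CB(\tau)\leq E(\tau)\,\mu(\tau)^{-1}$.

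Finally, because $\beta>0$, the exponent in $\mu(\tau)^{-1}=\exp\!\bigl(C\beta^{-1}[(1+\tau_1)^{-\beta}-(1+\tau)^{-\beta}]\bigr)$ is bounded above by $C\beta^{-1}(1+\tau_1)^{-\beta}$ uniformly in $\tau\in[\tau_1,\tau_2]$, which delivers exactly the claimed bound. There is no real obstacle here; the only point that requires a word of care is justifying that $B$ is differentiable almost everywhere (enough for the integrating-factor argument given that $A$ need only be assumed locally integrable, which follows from the hypothesis since $E$ is nondecreasing hence locally bounded on $[\tau_1,\tau_2]$). If one wishes to avoid any differentiability issues altogether, the same conclusion can be obtained by iterating the integral inequality directly and summing the resulting exponential series, using $\int_{\tau_1}^{\infty}(1+s)^{-1-\beta}ds=\beta^{-1}(1+\tau_1)^{-\beta}$ at each step.
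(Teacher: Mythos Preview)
Your argument is correct: the integrating-factor computation is clean, the use of monotonicity of $E$ to pull it outside the integral is exactly right, and the final bound on $\mu(\tau)^{-1}$ via $(1+\tau)^{-\beta}\geq 0$ is the point where $\beta>0$ enters. The paper does not actually give a proof of this lemma; it simply cites a textbook (Sogge), so there is no in-paper argument to compare against. Your write-up is more than adequate as a self-contained replacement for that citation.
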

\begin{proof}
 See ~\cite{sogge}.
\end{proof}

\subsection{Integrated Local Energy Inequality}
We follow the idea used in ~\cite{dr3} by choosing appropriate
functions $f$ and $\chi$ such that the coefficients on the right
hand side of \eqref{menergyeq} are positive. The left hand side can
be controlled by the energy flux $\tilde{E}[\phi]$ by Lemma
\ref{prop1}. We thus end up with an integrated energy inequality in
the whole space time. We now discuss this in detail.

Take
$$f=\b-\frac{\beta}{(1+r)^{\alpha}},\quad \chi=r^{-1}f,\quad \b=\frac{2}{\a}.$$
Notice that
\[
 \frac{(1+r)^\a-1}{r}\geq \frac{\a}{1+r}.
\]
We have
\begin{align*}
 &r^{-1}f+\f12 f'-\chi=\chi-r^{-1}f+\f12 f'=\frac{1}{(1+r)^{1+\a}},\\
&\chi-\f12 f'=\frac{\b\left((1+r)^\a-1\right)}{r(1+r)^{\a}}-\frac{1}{(1+r)^{1+\a}}\geq\frac{1}{(1+r)^{\a+1}},\\
&-\f12\Box \chi=\frac{\a+1}{r(1+r)^{2+\a}}.
\end{align*}
Hen the energy inequalities ~\eqref{menergyeq}, ~\eqref{energyeqT}
together with Lemma \ref{prop1} imply that
\begin{equation}
 \label{ILEQ}
\int_{\tau_1}^{\tau_2}\int_{\Si_\tau}\frac{|\bar\pa\phi|^2}{(1+r)^{1+\a}}d\vol\leq
12\b\tilde{E}[\phi](\tau_1)
+13\b\int_{\tau_1}^{\tau_2}\int_{\Si_\tau}|F-N-L||\bar\pa\phi|d\vol.
\end{equation}
To proceed, we have to estimate the linear terms $N(\Phi, \phi)$,
$L(\pa\phi)$. We first consider the case $\tau_2\geq\tau_1\geq t_0$.
For $\tau\geq t_0$, notice that on $\Si_\tau$
\[
C_0(1+|x|)^{-\a}(1+(t-|x|)_{+})^{-\a}\leq C_0(1+\tau)^{-\a}\leq
C_0(1+t_0)^{-\a}\leq\delta \a
\]
by the inequality \eqref{t0}(we have assumed this inequality in
Proposition \ref{mainprop}). Hence under the conditions on the
functions $L^{\mu}(t, x)$ in Proposition \ref{mainprop}, we always
have
\[
\int_{\tau_1}^{\tau_2}\int_{\Si_\tau}|L(\pa\phi)||\bar\pa\phi|d\vol\leq
\delta\a\int_{\tau_1}^{\tau_2}\int_{\Si_\tau}(1+r)^{-1-\a}|\bar\pa\phi|^2d\vol.
\]
For $N(\Phi,\phi)$, we consider it inside and outside the cylinder
$\{|x|\leq R_1\}$ separately. When $r=|x|\leq R_1$, the null
structure of $N(\Phi, \phi)$ is not necessary. We has to rely on the
smallness of $\delta$. Since $\Phi$ is a weak wave, condition $(iv)$
of Definition \ref{weakwave} implies that
\begin{equation*}
 \label{Qin}
|N|=|B^{\a\b}\pa_\a\Phi\pa_\b\phi|\leq A  \delta\a
(1+r)^{-1-\a}|\pa\phi|.
\end{equation*}
For $r\geq R_1$, the null structure of $N(\Phi, \phi)$ is of
particular importance. By Lemma \ref{lnullQ}, it suffices to
estimate the three terms $r^{-1}|\pa
\Phi||\overline{\pa_v}\psi||\bar\pa\phi|$, $r^{-1}|\pa_v\Phi||\pa
\psi||\bar\pa\phi|$, $r^{-1}|\pa\Phi||\phi||\bar\pa\phi|$. Without
loss of generality, assume $R_1\geq 1$. For the second term,
inequality \eqref{impsmall} shows that
\begin{equation*}
 \label{pavPhipsi}
|\pa_v\Phi||r^{-1}\pa\psi||\bar\pa\phi|\leq 2\delta
\a(1+r)^{-1-\a}|\bar\pa\phi|^2.
\end{equation*}
On $\Si_\tau\cap \{|x|\geq R_1\geq 1\}$, for the first term,  we
have
\begin{align*}
 \notag
r^{-1}|\pa\Phi||\overline{\pa_v}\psi||\bar\pa\phi|&\leq C(1+r)^{-\frac{3}{2}}(1+\tau)^{-\f12-4\a}|\overline{\pa_v}\psi||\bar\pa\phi|\\
\label{paPhipavpsi}
 &\leq \delta\a(1+r)^{-1-\a}|\bar\pa\phi|^2+C(1+\tau)^{-1-8\a}r^{-2+\a}|\overline{\pa_v}
 \psi|^2.
\end{align*}
Here we denote $C$ as a constant depending on $\a$, $R=t_0+R_0$,
$A^{\a\b}$, $B^{\a\b}$, $C_0$, $\delta$. Similarly for the third
term, we have
\begin{equation*}
 \label{paPhiphi}
r^{-1}|\pa\Phi||\phi||\bar\pa\phi|\leq
\delta\a(1+r)^{-1-\a}|\bar\pa\phi|^2+C(1+\tau)^{-1-8\a}r^{-2+\a}|\phi|^2.
\end{equation*}
It remains to control $r^{-2+\a}|\phi|^2$. We use the Hardy's
inequality outside the cylinder $\{|x|\leq R\}$. By Lemma
~\ref{sphereinbd}, we have
\begin{equation}
\label{phi2bd}
\begin{split}
\int_{\om}|\psi|^2(\tau,v,\om)d\om&\leq C\int_{\om}|\psi|^2(\tau, v_\tau, \om)d\om +C\left(\int_{v_\tau}^v\int_{\om}|\pa_v\psi|d\om dv\right)^2\\
 &\leq C\tilde{E}[\phi](\tau) +  C\int_{v_\tau}^v\int_{\om}r^{1+2\a}|\pa_v\psi|^2d\om dv\int_{v_\tau}^v r^{-1-2\a}dv\\
&\leq C\tilde{E}[\phi](\tau)+ C g(1+2\a, \tau),\quad (\tau, v,
\om)\in S_\tau,
\end{split}
\end{equation}
 where $v_\tau=\frac{R+\tau}{2}$, $v=\frac{r+t}{2}$. Hence for all
$p\leq 1+2\a$
\begin{equation}
 \label{phi21a}
\int_{S_\tau}r^{p-3\a}\phi^2dvd\om=\int_{v_\tau}^{\infty}r^{p-2-3\a}\int_{\om}|\psi|^2d\om
dv \leq C\tilde{E}[\phi](\tau)+ C g(1+2\a, \tau).
\end{equation}
On the other hand, Lemma ~\ref{lem2} shows that
\begin{equation*}
 \int_{S_\tau}\phi^2 dvd\om \leq C \tilde{E}[\phi](\tau).
\end{equation*}
Interpolate with \eqref{phi21a} for  $p=1+2\a$. We derive
\begin{equation*}
 \label{phi2a}
\begin{split}
\int_{S_\tau}r^\a\phi^2dvd\om&\leq C \tilde{E}[\phi](\tau)^{1-\gamma}\left(\tilde{E}[\phi](\tau)+g(1+2\a, \tau)\right)^{\gamma}\\
&\leq
C\tilde{E}[\phi](\tau)+C\tilde{E}[\phi](\tau)^{1-\gamma}g(1+2\a,
\tau)^{\gamma},
\end{split}
\end{equation*}
where $\gamma=\frac{\a}{1-\a}$. This gives estimates for $\phi^2$
outside the cylinder $\{|x|\leq R \}$.

In the region $R_1\leq r\leq R$, using Sobolev embedding and Lemma
\ref{sphereinbd}, we get
\begin{equation*}
 \label{phi2bdin}
\int_{\om}\phi^2 d\om\leq C
\left.\int_{\om}\phi^2d\om\right|_{r=R}+C\int_{r\leq
R}|\pa_r\phi|^2dx\\leq C\tilde{E}[\phi](\tau).
\end{equation*}
Therefore we can estimate $r^{-2+\a}|\phi|^2$ outside the cylinder
$\{|x|\leq R_1\}$ as follows
\begin{equation*}
\begin{split}
 \int_{\{r\geq R_1\}\cap \Si_\tau}r^{-2+\a}|\phi|^2d\si&=\int_{R_1\leq r\leq R}r^{-2+\a}\phi^2dx+\int_{S_\tau}r^\a \phi^2dvd\om\\
&\leq
C\tilde{E}[\phi](\tau)+C\tilde{E}[\phi](\tau)^{1-\gamma}g(1+2\a,
\tau)^{\gamma}.
\end{split}
\end{equation*}
Inside the cylinder $\{|x|\leq R_1\}$, we use the assumption that
$\pa\Phi$ is small. Hence combining the above estimates, we can
bound the linear term $(|N(\Phi, \phi)|+|L(\pa\phi)|)|\bar\pa \phi|$
in \eqref{ILEQ} as follows
\begin{align*}
 \b \int_{\tau_1}^{\tau_2}\int_{\Si_\tau}(|N|+|L|)|\bar\pa\phi|d\vol&\leq A\delta\int_{\tau_1}^{\tau_2}\int_{\Si_\tau}\frac{|\bar\pa\phi|^2}{(1+r)^{1+\a}}d\vol+C\int_{\tau_1}^{\tau_2}\frac{\tilde{E}[\phi](\tau)}{(1+\tau)^{1+8\a}}d\tau+ C \bar G[1+8\a, \a]_{\tau_1}^{\tau_2}\\
&+C\left(\int_{\tau_1}^{\tau_2}(1+\tau)^{-1-2\a}\tilde{E}[\phi](\tau)d\tau\right)^{1-\gamma}\left(\int_{\tau_1}^{\tau_2}(1+\tau)^{-7+4\a}g(1+2\a,\tau) d\tau\right)^{\gamma}\\
&\leq A\delta\int_{\tau_1}^{\tau_2}\int_{\Si_\tau}\frac{|\bar\pa\phi|^2}{(1+r)^{1+\a}}d\vol+C\int_{\tau_1}^{\tau_2}\frac{\tilde{E}[\phi](\tau)}{(1+\tau)^{1+2\a}}d\tau\\
&\qquad+ C G[2+2\a, 1+2\a]_{\tau_1}^{\tau_2}+C \bar G[1+2\a,
\a]_{\tau_1}^{\tau_2},
\end{align*}
where we used H$\ddot{o}$lder's inequality and Jensen's inequality
\[
 a^{1-\gamma}b^{\gamma}\leq (1-\gamma)a +\gamma b,\quad \forall a,
 b>0.
\]
For the inhomogeneous term $|F||\bar \pa \phi|$ in \eqref{ILEQ}, we
have
\begin{equation*}
 \int_{\tau_1}^{\tau_2}\int_{\Si_\tau}|F||\bar\pa\phi|d\vol\leq \delta\a \int_{\tau_1}^{\tau_2}\int_{\Si_\tau}\frac{|\bar\pa\phi|^2}{(1+r)^{1+\a}}d\vol+ C D^\a[F]_{\tau_1}^{\tau_2}
\end{equation*}
If we choose
$$\delta_0=\frac{A}{100},$$
then for all $\delta<\delta_0$, inequality \eqref{ILEQ} implies that
\begin{equation*}
\begin{split}
 \int_{\tau_1}^{\tau_2}\int_{\Si_\tau}\frac{|\bar\pa\phi|^2}{(1+r)^{1+\a}}d\vol&\les \tilde{E}[\phi](\tau_1)+ D^\a[F]_{\tau_1}^{\tau_2}+\int_{\tau_1}^{\tau_2}\frac{\tilde{E}[\phi](\tau)}{(1+\tau)^{1+2\a}}d\tau\\
&+  G[2+2\a, 1+2\a]_{\tau_1}^{\tau_2}+ \bar G[1+2\a,
\a]_{\tau_1}^{\tau_2}.
\end{split}
\end{equation*}
Similarly, the energy inequality ~\eqref{energyeqT} shows that
\begin{equation*}
\begin{split}
 \tilde{E}[\phi](\tau_2)&\les \tilde{E}[\phi](\tau_1)+\int_{\tau_1}^{\tau_2}
 \frac{\tilde{E}[\phi](\tau)}{(1+\tau)^{1+2\a}}d\tau+  G[2+2\a, 1+2\a]_{\tau_1}^{\tau_2}+
 \bar G[1+2\a, \a]_{\tau_1}^{\tau_2}+D^\a[F]_{\tau_1}^{\tau_2}.
\end{split}
\end{equation*}
We Gronwall's inequality to control the second term on the right
hand side of the above inequality. We thus have
\begin{equation}
 \label{EING}
\tilde{E}[\phi](\tau_2)\les \tilde{E}[\phi](\tau_1)+  G[2+2\a,
1+2\a]_{\tau_1}^{\tau_2}+ \bar G[1+2\a,
\a]_{\tau_1}^{\tau_2}+D^\a[F]_{\tau_1}^{\tau_2}.
\end{equation}
Then the above integrated local energy inequality is improved to
\begin{equation}
\label{ILEG}
\begin{split}
 \int_{\tau_1}^{\tau_2}\int_{\Si_\tau}\frac{|\bar\pa\phi|^2}{(1+r)^{1+\a}}d\vol&\les
 \tilde{E}[\phi](\tau_1)+ D^\a[F]_{\tau_1}^{\tau_2}+  G[2+2\a, 1+2\a]_{\tau_1}^{\tau_2}+
 \bar G[1+2\a, \a]_{\tau_1}^{\tau_2}.
\end{split}
\end{equation}
We have shown \eqref{EING}, \eqref{ILEG} for all $\tau_2\geq
\tau_1\geq t_0$. We claim that these two inequalities hold for all
$\tau_2\geq \tau_1\geq 0$. In fact, when $\tau_1\leq \tau_2\leq
t_0$, the finite speed of propagation for wave equation
~\cite{sogge} shows that $\phi$ vanishes when $r\geq R=t_0+R_0$.
Hence we can show
\[
 \int_{\tau_1}^{\tau_2}\int_{\Si_\tau}(|N|+|L|)|\bar\pa\phi|dxd\tau\les \int_{\tau_1}^{\tau_2}
 \int_{r\leq R}|\bar\pa\phi|^2dxd\tau\les
 \int_{\tau_1}^{\tau_2}\tilde{E}[\phi](\tau)d\tau,
\]
When considering the energy inequality \eqref{energyeqT},
$\int_{\tau_1}^{\tau_2}\tilde{E}[\phi](\tau)d\tau$ can be absorbed
by using Gronwall's inequality since $\tau_2\leq t_0$. Hence we can
conclude ~\eqref{EING}, ~\eqref{ILEG} for all $0\leq\tau_1\leq
\tau_2\leq t_0$. For the case $\tau_1\leq t_0\leq \tau_2$, split the
interval $[\tau_1, \tau_2]$ into $[\tau_1, t_0]$ and $[t_0,
\tau_2]$, on which we have two separate inequalities. Combining them
together, we get ~\eqref{EING}, ~\eqref{ILEG}. Therefore
\eqref{EING}, \eqref{ILEG} hold for all $0\leq \tau_1\leq \tau_2$.

\bigskip

We end this section by making a remark. We have used the modified
energy flux $\tilde{E}[\phi](\tau)$ instead of $E[\phi](\tau)$ to
make the above argument rigorous.
 We claim that the inequalities \eqref{EING}, ~\eqref{ILEG} hold if we
 replace $\tilde{E}[\phi](\tau)$ with $E[\phi](\tau)$. In fact, it
 is sufficient to consider the case when
\[
\tilde{E}[\phi](\tau_1)+D^\a[F]_{\tau_1}^{\tau_2}+  G[2+2\a,
1+2\a]_{\tau_1}^{\tau_2}+ \bar G[1+2\a, \a]_{\tau_1}^{\tau_2}
\]
is finite. By \eqref{EING}, this shows that
 $\tilde{E}[\phi](\tau)$ is finite for all $\tau\in[\tau_1, \tau_2]$.
  Thus Remark 1 shows that all the above statements hold if we replace
  $\tilde{E}[\phi](\tau)$ with $E[\phi](\tau)$ for $\tau\in[\tau_1,
  \tau_2]$. In the sequel, we no longer use the modified energy flux
  $\tilde{E}[\phi](\tau)$ for the reason argued here.

\subsection{p-weighted Energy inequality}
We revisit the p-weighted energy inequalities developed by M.
Dafermos and I. Rodnianski in ~\cite{newapp}. Rewrite the equation
~\eqref{LWAVEEQ} in null coordinates
\begin{equation}
\label{waveqpsi} -\pa_u \pa_v \psi+\lap \psi=r(F-N-L),\quad
\psi:=r\phi,
\end{equation}
where $\lap$ denotes the Laplacian on the sphere with radius $r$.
Multiplying the equation by $r^p \pa_v\psi$ and integration by parts
in the region bounded by the two null hypersurfaces $S_{\tau_1},
S_{\tau_2}$ and the hypersurface $\{r=R\}$, we obtain
\begin{align}
\notag
&\int_{S_{\tau_2}} r^p (\pa_v\psi)^2 dvd\om +2\int_{\tau_1}^{\tau_2}\int_{S_\tau}r^{p+1}(F-N-L)\pa_v\psi dvd\tau d\om\\
\notag
& +\int_{\tau_1}^{\tau_2}\int_{S_\tau} r^{p-1}  \left (p(\pa_v\psi)^2 +
(2-p) |\nabb\psi|^2\right)dvd\tau d\om +\int_{\mathcal I_{\tau_1}^{\tau_2}} r^p |\nabb\psi|^2 dud\tau d\om\\
\label{pWE} =& \int_{S_{\tau_1}}r^p (\pa_v\psi)^2 dvd\om
+\int_{\tau_1}^{\tau_2} r^p \left (|\nabb\psi|^2-
(\pa_v\psi)^2\right)d\om d\tau |_{r=R}.
\end{align}
We claim that we can estimate the boundary terms on $r=R$ as follows
\begin{equation}
 \begin{split}
  &\left|\int_{\tau_1}^{\tau_2} r^p \left (|\nabb\psi|^2- (\pa_v\psi)^2\right)d\om d\tau \right|_{r=R}\\
&\les E[\phi](\tau_1)+  G[2+2\a, 1+2\a]_{\tau_1}^{\tau_2}+ \bar
G[1+2\a, \a]_{\tau_1}^{\tau_2}+D^\a[F]_{\tau_1}^{\tau_2}.
 \end{split}
\label{bdestR}
\end{equation}
Since $R$ is a fixed constant, it suffices to show \eqref{bdestR}
for $p=0$. Thus take $p=0$ in the identity \eqref{pWE}. The energy
term on the null hypersurfaces $S_{\tau_1}$, $S_{\tau_2}$,
$\mathcal{I}_{\tau_1}^{\tau_2}$ can be bounded by
$\tilde{E}(\tau_2)+\tilde{E}(\tau_1)$, which can be estimated by using the energy
inequality \eqref{EING}. We use the improved integrated local energy estimates for $\nabb\phi$ to bound the third term in \eqref{pWE}.
 Recall that when $r\geq R$, we in fact have the improved lower bound
\[
\frac{1}{r}\les
\frac{\b\left((1+r)^\a-1\right)}{r(1+r)^{\a}}-\frac{1}{(1+r)^{1+\a}}=\chi-\f12
f'
\]
instead of $\frac{1}{(1+r)^{1+\a}}$ we have used in \eqref{menergyeq} to obtain \eqref{ILEQ}.
Thus we actually can show that
\[
\int_{\tau_1}^{\tau_2}\int_{S_\tau} r^{-1}  |\nabb\psi|^2dvd\om
d\tau=\int_{\tau_1}^{\tau_2}\int_{S_\tau}\frac{|\nabb\phi|^2}{r}d\vol\les
E[\phi](\tau_1)+\int_{\tau_1}^{\tau_2}\int_{\Si_\tau}|F-N-L||\bar{\pa}\phi|d\vol.
\]
For the inhomogeneous term, notice that
\[
 \int_{\tau_1}^{\tau_2}\int_{S_\tau}|r(F-N-L)\pa_v\psi|dvd\om d\tau\les \int_{\tau_1}^{\tau_2}\int_{S_\tau}
 |F-N-L||\bar\pa\phi|d\vol.
\]
We have already shown that this term can be bounded by the right hand side of \eqref{bdestR} in the previous section. Thus
the inequality \eqref{bdestR} follows.

\bigskip

Now, to make use of the identity \eqref{pWE}, we need to control the
inhomogeneous term $r^{p+1}(F-N-L)\pa_v\psi$ as
 all the other terms have a positive
sign or are bounded. Due to the different structures of $F$, $N$, $L$, we discuss them separately. The most difficult term
is the linear term $r^{p+1}N(\Phi, \phi)\pa_v\psi$ satisfying the null condition. For this term, by Lemma ~\ref{lnullQ},
it suffices to estimate the following three
terms
\[
r^p|\pa\Phi||\overline{\pa_v}\psi||\pa_v\psi|,\quad
r^p|\pa\Phi||\phi||\pa_v\psi|, \quad r^p|\pa_v\Phi||\pa\psi||\pa_v\psi|.
\]
In application, $p\in (0, 2)$. In particular, the coefficients $p$, $2-p$ in \eqref{pWE} are positive. From
the decay assumptions on $\Phi$( see Definition \ref{weakwave}), we estimate the first term as follows
\begin{equation}
\label{Phibarpsi}
\begin{split}
 2r^p|\pa \Phi||\overline{\pa_v}\psi||\pa_v\psi|&\leq2 r^{p-\f12}(1+(t-|x|)_+)^{-\f12-4\a}|\overline{\pa_v}\psi||\pa_v\psi|\\
&\leq\ep_1
r^{p-1}|\overline{\pa_v}\psi|^2+\frac{C}{\ep_1}r^p(1+\tau)^{-1-8\a}|\pa_v\psi|^2,\quad
\forall \ep_1>0.
\end{split}
\end{equation}
The first term will be absorbed if $p>\ep_1$, $2-p>\ep_1$, while the second term will be controlled by using
Gronwall's inequality. Similarly for the second term $r^p|\pa\Phi||\phi||\pa_v\psi|$, we can show
\begin{equation*}
 2r^p|\pa \Phi||\phi||\pa_v\psi|\les
 r^{p-1+3\a}(1+\tau)^{-6\a}|\pa_v\psi|^2+r^{p-3\a}(1+\tau)^{-1-2\a}|\phi|^2.
\end{equation*}
We use interpolation to further bound the first term on the right hand side of the above inequality. Notice that
\[
 p\cdot \frac{5\a}{1+2\a}\geq p-1+3\a,\quad p\leq 1+2\a.
\]
Using H$\ddot{o}$lder's inequality and Jensen's inequality, we
have
\[
 r^{p-1+3\a}(1+\tau)^{-6\a}\leq \left(r^p(1+\tau)^{-1-\a}\right)^{\frac{5\a}{1+2\a}}\cdot
 \left((1+\tau)^{-\a}\right)^{1-\frac{5\a}{1+2\a}}\leq
 \tau^{-\a}+r^p(1+\tau)^{-1-\a}.
\]
We use estimate \eqref{phi21a} to bound $r^{p-3\a}\phi^2$. Summarizing, we can show that for $p\leq 1+2\a$
\begin{equation}
\label{Phiphip}
\begin{split}
&\int_{\tau_1}^{\tau_2}\int_{S_\tau}r^p|\pa \Phi||\phi||\pa_v\psi|dvd\om d\tau\\
&\les \tau_1^{-\a}G[0, 0]_{\tau_1}^{\tau_2}+G[1+\a, p]_{\tau_1}^{\tau_2}+
 G[1+2\a,
 1+2\a]_{\tau_1}^{\tau_2}+\int_{\tau_1}^{\tau_2}\frac{\tilde{E}[\phi](\tau)}{(1+\tau)^{1+2\a}}d\tau.
\end{split}
\end{equation}
It remains to handle the third term
$r^p|\pa_v\Phi||\pa\psi||\pa_v\psi|$. We estimate this term and the linear term $r^{p+1}L(\pa\phi) \pa_v\psi$ together due to the
similar assumptions on $\pa_v\Phi$, $L^\mu(t, x)$. The difficulty for estimating these two terms is that we are not allowed
 to use Cauchy-Schwartz's inequality as we did previously.
However notice that the integrated local energy is expected to decay in $\tau$( $(1+\tau)^{-1-\a}$). We can put some positive weights
of $\tau$ in the integrated local energy such that it is still bounded. To start with, observe that when $|x|\geq R\geq 1$,
we have
\[
r^p|\pa_v\Phi||\pa\psi|+r^{p+1}|L(\pa\phi)|\les
r^{p-1-3\a}|\pa\psi|+r^{p+1-1-3\a}|\pa\phi|\les r^{p-3\a}|\bar\pa
\phi|.
\]
Thus we can bound
\begin{equation*}
 \begin{split}
  &\int_{\tau_1}^{\tau_2}\int_{S_\tau}r^p|\pa_v \Phi||\pa\psi||\pa_v\psi|+r^{p+1}|L(\pa\phi)||\pa_v\psi|dvd\om d\tau
\les\int_{\tau_1}^{\tau_2}\int_{S_\tau}r^{p-3\a}|\bar\pa\phi||\pa_v\psi|dvd\om d\tau\\
&\les\left(\int_{\tau_1}^{\tau_2}\tau^{1-\a}\int_{S_\tau}\frac{|\bar\pa\phi|^2}{(1+r)^{1+\a}}d\vol\right)^\f12\left(G[1-\a, 2p-1-5\a]_{\tau_1}^{\tau_2}\right)^\f12\\
&\les
\frac{1}{\ep_2}\int_{\tau_1}^{\tau_2}\tau^{1-\a}\int_{S_\tau}\frac{|\bar\pa\phi|^2}{(1+r)^{1+\a}}d\vol+\ep_2
\left(G[0, 0]_{\tau_1}^{\tau_2}\right)^{\frac{2\a}{1+\a}}
 \left(G[1+\a, p]_{\tau_1}^{\tau_2}\right)^{\frac{1-\a}{1+\a}}\\
 &\les \frac{1}{\ep_2}\int_{\tau_1}^{\tau_2}\tau^{1-\a}\int_{S_\tau}\frac{|\bar\pa\phi|^2}{(1+r)^{1+\a}}d\vol+\ep_2
 \left(G[0, 0]_{\tau_1}^{\tau_2}
 +G[1+\a, p]_{\tau_1}^{\tau_2}\right)
 \end{split}
\end{equation*}
for all positive number $\ep_2\leq 1$. Here we have used the fact
\[
 2p-1-5\a-\frac{1-\a}{1+\a}p\leq 0, \quad p\leq 1+2\a.
\]
We now have to show that the first term on the right hand side is bounded. We rely on the following lemma.
\begin{lem}
 \label{lweightILE}
Suppose $f(\tau)$ is smooth. Then for any $\b\neq 0$, we have the identity
\begin{equation*}
\int_{\tau_1}^{\tau_2}s^\b
f(s)ds=\b\int_{\tau_1}^{\tau_2}\tau^{\b-1}\int_{\tau}^{\tau_2}f(s)ds
d\tau+\tau_1^{\b}\int_{\tau_1}^{\tau_2}f(s)ds.
\end{equation*}
\end{lem}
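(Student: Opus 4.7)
The plan is to recognize this as a routine integration by parts identity, with the inner integral playing the role of an antiderivative. Concretely, I introduce the function $F(\tau):=\int_\tau^{\tau_2} f(s)\,ds$, which is smooth since $f$ is, and which satisfies the two properties $F(\tau_2)=0$ and $F'(\tau)=-f(\tau)$. The left hand side of the claimed identity can therefore be rewritten as $\int_{\tau_1}^{\tau_2}s^\beta f(s)\,ds = -\int_{\tau_1}^{\tau_2}s^\beta F'(s)\,ds$.

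Next I integrate by parts, moving the derivative from $F$ onto $s^\beta$. The boundary contribution from the upper endpoint $s=\tau_2$ vanishes because $F(\tau_2)=0$, while the boundary contribution from $s=\tau_1$ yields precisely $\tau_1^\beta F(\tau_1)=\tau_1^\beta\int_{\tau_1}^{\tau_2}f(s)\,ds$, which is the last term on the right. The remaining bulk term is $\int_{\tau_1}^{\tau_2}\beta s^{\beta-1}F(s)\,ds$, and substituting back the definition of $F$ and renaming $s\mapsto\tau$ gives exactly $\beta\int_{\tau_1}^{\tau_2}\tau^{\beta-1}\int_\tau^{\tau_2}f(s)\,ds\,d\tau$, matching the first term on the right.

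Alternatively, one can verify the identity by Fubini: swapping the order of integration in the double integral $\int_{\tau_1}^{\tau_2}\tau^{\beta-1}\int_\tau^{\tau_2}f(s)\,ds\,d\tau$, the $\tau$-integral evaluates to $(s^\beta-\tau_1^\beta)/\beta$ (this is where the assumption $\beta\neq 0$ enters), and rearranging gives the stated formula. Either route is purely mechanical.

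There is no real obstacle here: the assumption $\beta\neq 0$ is needed only so that $s^{\beta-1}$ is integrable at $0$ is not an issue (since $\tau_1$ and $\tau_2$ are positive in the applications) and so that the antiderivative $s^\beta/\beta$ is well defined. The smoothness of $f$ is more than enough to justify the integration by parts; continuity would suffice. The identity will be applied in the sequel to insert weights of $\tau$ into integrated local energy estimates.
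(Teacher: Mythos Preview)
Your proof is correct and follows exactly the same approach as the paper: define $F(\tau)=\int_\tau^{\tau_2} f(s)\,ds$ and integrate by parts. The paper's own proof is just the two-line sketch ``Let $F(\tau)=\int_{\tau}^{\tau_2}f(s)ds$. Integration by parts gives the lemma,'' and you have filled in precisely those details.
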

\begin{proof}
 Let
\[
 F(\tau)=\int_{\tau}^{\tau_2}f(s)ds.
\]
Integration by parts gives the lemma.
\end{proof}
Apply the lemma to $\b=1-\a$,
$f(\tau)=\int_{S_\tau}\frac{|\bar\pa\phi|^2}{(1+r)^{1+\a}}d\si$.
Then the integrated local energy inequality \eqref{ILEG} implies
that
\begin{equation*}
\begin{split}
\int_{\tau_1}^{\tau_2}\tau^{1-\a}\int_{S_\tau}\frac{|\bar\pa\phi|^2}{(1+r)^{1+\a}}d\vol&\les \tau_1^{-\a}\int_{\tau_1}^{\tau_2}E[\phi](\tau)d\tau+\tau_1^{1-\a}E[\phi](\tau_1)+\int_{\tau_1}^{\tau_2}\tau^{-\a}D^\a[F]_{\tau}^{\tau_2}d\tau\\
&\qquad+G[1+2\a, 1+2\a]_{\tau_1}^{\tau_2}+\bar G[2\a,
\a]_{\tau_1}^{\tau_2}+\tau_1^{1-\a} D^\a[F]_{\tau_1}^{\tau_2}.
\end{split}
\end{equation*}
Since in application only two p-weighted energy inequalities
associated to $p=1$ and $p=1+2\a$ are considered, we use interpolation to bound $\bar G[2\a, \a]$
\[
 \bar G[2\a, \a]_{\tau_1}^{\tau_2}\les \ep_2\ep_3 \bar G[2\a, 2\a]_{\tau_1}^{\tau_2}+\frac{1}{\ep_2\ep_3}\tau_1^{-2\a}\bar G[0, 0]_{\tau_1}^{\tau_2}
\]
for all positive $\ep_3$, where $\ep_2$ is the constant appeared before.

Our ultimate goal is to derive the decay of the energy flux
$E[\phi](\tau)$ on $\Si_\tau$. The almost energy flux $\bar{g}(0, \tau)$ on $S_\tau$ is related to $E[\phi](\tau)$ by the following lemma.
\begin{lem}
 \label{lgbarE}
\begin{equation*}
E[\phi](\tau)\les \bar g(0, \tau)+2\int_{r\leq R}|\pa\phi|^2+\phi^2
dx\les \tilde{E}[\phi](\tau).
\end{equation*}
\end{lem}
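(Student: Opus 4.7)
The plan is to establish the two inequalities by carefully using the relationship $\psi = r\phi$ on the null cone $S_\tau$ to trade between $\psi$-derivatives and $\phi$-derivatives, plus a simple trace bound on the sphere $\{r=R\}$.

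For the first inequality, the interior piece $\int_{r\le R}|\pa\phi|^2\,dx$ is already on the right, so the real task is to bound the null-cone piece $\int_{S_\tau}(|\pa_v\phi|^2+|\nabb\phi|^2)r^2\,dvd\om$ by $\bar g(0,\tau)$ plus a controllable error. The angular piece is immediate because $\nabb\psi=r\nabb\phi$ implies $\int_{S_\tau}r^2|\nabb\phi|^2\,dvd\om=\int_{S_\tau}|\nabb\psi|^2\,dvd\om\le\bar g(0,\tau)$. For the $\pa_v$ piece I would use the identity (coming from $\pa_v\psi=\phi+r\pa_v\phi$ and $\pa_v r=1$)
\[
r^2(\pa_v\phi)^2=(\pa_v\psi)^2-\pa_v(r\phi^2),
\]
and integrate in $v$ from $v_\tau$ to $\infty$. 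The boundary term at $v=\infty$ is $-\lim_{v\to\infty} r\phi^2\le 0$ (so one obtains an inequality in the right direction), while the boundary term at $v=v_\tau$ produces $R\int_\om\phi^2\big|_{r=R,\,t=\tau}\,d\om$, which by the standard trace inequality on $B_R\subset\mathbb R^3$ is $\lesssim\int_{r\le R}(\phi^2+|\pa\phi|^2)\,dx$. Combining gives the first inequality.

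For the second inequality, $\bar g(0,\tau)+\int_{r\le R}(|\pa\phi|^2+\phi^2)\,dx\lesssim\tilde E[\phi](\tau)$, I would bound each piece individually. The term $\int_{r\le R}|\pa\phi|^2\,dx$ is controlled by $E[\phi](\tau)\le\tilde E[\phi](\tau)$ by definition, and $\int_{r\le R}\phi^2\,dx\le(1+R)^2\int_{r\le R}(\phi/(1+r))^2\,dx\lesssim\tilde E[\phi](\tau)$ by Lemma \ref{lem2}. For $\bar g(0,\tau)=\int_{S_\tau}(|\pa_v\psi|^2+|\nabb\psi|^2)\,dvd\om$, the angular part equals $\int_{S_\tau}r^2|\nabb\phi|^2\,dvd\om\le E[\phi](\tau)$, while Corollary \ref{cor1} gives $\int_{S_\tau}|\pa_v\psi|^2\,dvd\om\le\int_{S_\tau}r^2\phi_v^2\,dvd\om+2\tilde E[\phi](\tau)\le 3\tilde E[\phi](\tau)$.

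There is not really a hard step here; the only subtle point is the boundary term at null infinity in the integration-by-parts identity, and it is handled for free because $r\phi^2\ge 0$, so the only sign that matters for the inequality is at $v=v_\tau$. The remaining work is the trace bound on $\{r=R\}$, which is standard since $R$ is a fixed constant depending only on the allowed parameters.
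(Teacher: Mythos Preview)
Your proposal is correct and follows essentially the same route as the paper. Both arguments hinge on the identity $(\pa_v\psi)^2=r^2(\pa_v\phi)^2+\pa_v(r\phi^2)$, integrate it over $S_\tau$, and then handle the resulting boundary term at $r=R$ by the elementary trace bound $R\int_\om\phi^2|_{r=R}\,d\om\le\int_{r\le R}(|\pa_r\phi|^2+2\phi^2)\,dx$. The only cosmetic difference is in the second inequality: the paper rewrites the middle expression as an exact identity $E[\phi](\tau)+[\int_\om r\phi^2\,d\om]_{v_\tau}^{\infty}+\int_{r\le R}(|\pa\phi|^2+2\phi^2)\,dx$ and bounds the boundary terms via Lemma~\ref{lem1} and Lemma~\ref{lem2}, whereas you bound $\int_{S_\tau}|\pa_v\psi|^2\,dvd\om$ directly through Corollary~\ref{cor1}; since Corollary~\ref{cor1} is itself derived from the same identity, the two arguments coincide.
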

\begin{proof}
In fact note that
\begin{equation*}
\begin{split}
 \bar g(0, \tau)+2\int_{r\leq R}|\pa\phi|^2+\phi^2 dx&=\int_{S_\tau}r^2(\pa_v\phi)^2+\pa_v(r\phi^2) +r^2|\nabb\phi|^2dvd\om +2\int_{r\leq R}|\pa\phi|^2+\phi^2 dx\\
&=E[\phi](\tau)+\left.\int_{\om}r\phi^2d\om\right|_{v_\tau}^{\infty}+\int_{r\leq
R}|\pa\phi|^2+2\phi^2dx.
\end{split}
\end{equation*}
Lemma ~\ref{sphereinbd} and Lemma \ref{phiboundH} imply that
\[
 \bar g(0, \tau)+2\int_{r\leq R}|\pa\phi|^2+\phi^2 dx\les
 \tilde{E}[\phi](\tau).
\]
To prove the other side of the inequality, it suffices to show that
\[
\left.\int_{\om}r\phi^2(\tau, R, \om)d\om\right|_{r=R}\leq
\int_{r\leq R}|\pa_r\phi|^2+2\phi^2 dx.
\]
Without loss of generality, assume $R\geq 2$. Notice that
\begin{equation*}
 R^3\int_{\om}\phi^2(\tau, R, \om)d\om=\int_{0}^{R}\int_{\om}\pa_r(r^3\phi^2)d\om dr\leq 3\int_{r\leq R}
 \phi^2dx + R\int_{r\leq R}|\pa_r\phi|^2+\phi^2dx.
\end{equation*}
 Hence
\[
 R\int_{\om}\phi^2(\tau, R, \om)d\om\leq \int_{r\leq R}|\pa_r\phi|^2+2\phi^2 dx\leq \int_{r\leq R}|\pa\phi|^2+2\phi^2
 dx.
\]
Thus the lemma holds.
\end{proof}
Since $G[0, 0]_{\tau_1}^{\tau_2}\leq \bar G[0, 0]_{\tau_1}^{\tau_2}$, using Lemma \ref{lgbarE}, we can control $\bar{G}[0, 0]$ in terms of $E[\phi](\tau)$
\[
 G[0, 0]_{\tau_1}^{\tau_2}\leq\bar G[0, 0]_{\tau_1}^{\tau_2}\leq \int_{\tau_1}^{\tau_2}\tilde{E}[\phi](\tau)d\tau.
\]
Summarizing, we can show that
\begin{equation}
 \begin{split}
  &\int_{\tau_1}^{\tau_2}\int_{S_\tau}r^p|\pa_v \Phi||\pa\psi||\pa_v\psi|+r^{p+1}|L(\pa\phi)||\pa_v\psi|dvd\om d\tau\\
&\les
\left(\ep_2+\frac{\tau_1^{-\a}}{\ep_2}+\frac{\tau_1^{-2\a}}{\ep_2^2\ep_3}\right)\int_{\tau_1}^{\tau_2}
E[\phi](\tau)d\tau+\tau_1^{1-\a}E[\phi](\tau_1)+\int_{\tau_1}^{\tau_2}\tau^{-\a}D^\a[F]_{\tau}^{\tau_2}d\tau\\
&\qquad+\tau_1^{1-\a}
D^\a[F]_{\tau_1}^{\tau_2}+\frac{1}{\ep_2}G[1+2\a,
1+2\a]_{\tau_1}^{\tau_2}+ \ep_3\bar G[2\a,
2\a]_{\tau_1}^{\tau_2}+G[1+\a, p]_{\tau_1}^{\tau_2}.
 \end{split}
\label{ppavPhi}
\end{equation}
Here we used the argument in the end of previous section to replace $\tilde{E}[\phi](\tau)$ with $E[\phi](\tau)$. We must
remark here that the implicit constants before the other terms on
the right hand side of \eqref{ppavPhi} may also depend on $\ep_i$.
However, since $\ep_i$ will be chosen to depend only on $R$, $\a$,
$B^{\a\b}$, $C_0$, the omitted dependence will not affect the argument in the sequel.

\bigskip

Finally, we treat the inhomogeneous term $r^{p+1}F\cdot \pa_v
\psi$ in \eqref{pWE}. Since $D^{2\a}[F]_{\tau_1}^{\tau_2}$ is expected to decay in $\tau$, we put some positive weights of $\tau$
in $D^{2\a}[F]_{\tau}^{\tau_2}$ and estimate it by using Lemma
\ref{lweightILE} applied to $\b=p-\frac{3}{2}\a$,
$f(\tau)=\int_{S_\tau}r^{1+2\a}|F|^2d\si$. We can show that
\begin{equation}
\label{pWEFi}
\begin{split}
 &\int_{\tau_1}^{\tau_2}\int_{S_\tau}2r^{p+1}F\cdot\pa_v\psi dvd\tau d\om\\
 &\leq
 \ep_4 G[p-\frac{3}{2}\a, 2p-1-2\a]_{\tau_1}^{\tau_2} +
 \frac{1}{\ep_4}\int_{\tau_1}^{\tau_2}(1+\tau)^{p-\frac{3}{2} \a}\int_{S_\tau}|F|^2
 r^{1+2\a}d\vol\\
 &\les \ep_4 G[p-\frac{3}{2}\a,
 2p-1-2\a]_{\tau_1}^{\tau_2}+\frac{1}{\ep_4}\int_{\tau_1}^{\tau_2}\tau^{p-1-\frac{3}{2}\a}D^{2\a}[F]_{\tau}^{\tau_2}d\tau
 +\frac{1}{\ep_4}\tau_1^{p-\frac{3}{2}\a}D^{2\a}[F]_{\tau_1}^{\tau_2}
 \end{split}
\end{equation}
for any $0<\ep_4\leq 1$ and $\tau_2\geq\tau_1\geq t_0$.

\subsection{Proof of Proposition \ref{mainprop}}
Having controlled $\int_{\tau_1}^{\tau_2}\int_{S_\tau}r^{p+1}(F-N-L)\pa_v\psi dvd\om d\tau$, we are now
able to prove Proposition \ref{mainprop}. First let
$$\ep_1= \frac{1-2\a}{2A},\quad A=\max \{|B^{\mu\nu}|\}.$$
Hence for $p=1$ or $1+2\a$, the third term in ~\eqref{pWE} dominates
the first term on the right hand side of ~\eqref{Phibarpsi}. Set
$p=1+2\a$ in ~\eqref{pWE} and $\ep_4=1$ in ~\eqref{pWEFi}. Combining
the estimates ~\eqref{bdestR}, ~\eqref{Phibarpsi}, ~\eqref{Phiphip},
~\eqref{ppavPhi}, we infer that
\begin{equation*}
 \begin{split}
  g(1+2\a, \tau_2)+ \bar G[0, 2\a]_{\tau_1}^{\tau_2}&\les g(1+2\a, \tau_1)+\left(\ep_2+\frac{\tau_1^{-\a}}{\ep_2}+\frac{\tau_1^{-2\a}}{\ep_2^2\ep_3}\right)\int_{\tau_1}^{\tau_2}
E[\phi](\tau)d\tau+\int_{\tau_1}^{\tau_2}\tau^{\f12\a}D^{2\a}[F]_{\tau}^{\tau_2}d\tau\\
&+\tau_1^{1-\a}E[\phi](\tau_1)+\tau_1^{1+\f12\a}
D^{2\a}[F]_{\tau_1}^{\tau_2}+ \ep_3\bar G[0, 2\a]_{\tau_1}^{\tau_2}
+\frac{1}{\ep_2}G[1+\f12\a, 1+2\a]_{\tau_1}^{\tau_2}.
 \end{split}
\end{equation*}
 Now suppose the implicit constant before
$\ep_3 \bar G[0, 2\a]$ is $C_1$, which is independent of $\ep_2$,
$\ep_3$. Take
$$\ep_3=\frac{1}{2C_1}.$$
We remark here that we can choose different $\ep_i$ for different
values of $p$. In particular, we conclude that $\ep_3 \bar G[0, 2\a]$ can be absorbed by the
left hand side. Then apply Gronwall's inequality( Lemma \ref{lGronwall}). We can control the
last term $\frac{1}{\ep_2}G[1+\f12\a,
1+2\a]_{\tau_1}^{\tau_2}$ and conclude that
\begin{equation}
\label{pWE1abarG}
 \begin{split}
  g(1+2\a, \tau_2)+ \bar G[0, 2\a]_{\tau_1}^{\tau_2}&\les g(1+2\a, \tau_1) +\tau_1^{1+\f12\a} D^{2\a}[F]_{\tau_1}^{\tau_2}+\int_{\tau_1}^{\tau_2}\tau^{\f12\a}D^{2\a}[F]_{\tau}^{\tau_2}d\tau\\
&+\exp\left(\frac{2\tau_1^{-\f12\a}}{\a\ep_2}\right)\left(\ep_2+\frac{\tau_1^{-\a}}{\ep_2}+\frac{\tau_1^{-2\a}}{\ep_2^2}\right)\int_{\tau_1}^{\tau_2}
E[\phi](\tau)d\tau+\tau_1^{1-\a}E[\phi](\tau_1).
 \end{split}
\end{equation}
The integral of the energy on the right hand side can be estimated
when we combine \eqref{pWE1abarG} with the p-weighted energy
inequality for $p=1$.

\bigskip

Now take $p=1$ in \eqref{pWE}. First, we use interpolation to estimate
the first term $G[1-\frac{3}{2}\a, 1-2\a]_{\tau_1}^{\tau_2}$ on the
right hand side of ~\eqref{pWEFi}
\[
 G[1-\frac{3}{2}\a, 1-2\a]_{\tau_1}^{\tau_2}\leq\left(G[1+\f12 \a, 1]_{\tau_1}^{\tau_2}\right)^{1-2\a}
\left(G[0, 0]_{\tau_1}^{\tau_2}\right)^{2\a}\leq G[1+\f12 \a,
1]_{\tau_1}^{\tau_2}+G[0, 0]_{\tau_1}^{\tau_2}.
\]
To retrieve the full energy $E[\phi](\tau)$ from $\bar{g}(0, \tau)$,
by Lemma \ref{lgbarE}, add
\[
 2\int_{\tau_1}^{\tau_2}\int_{r\leq R} |\pa\phi|^2+\phi^2dxd\tau
\]
to both sides of ~\eqref{pWE}. Then the integrated local energy
estimate ~\eqref{ILEG} restricted to the region $r\leq R$ and
Gronwall's inequality imply that
\begin{align*}
  g(1, \tau_2)+ \int_{\tau_1}^{\tau_2}E[\phi](\tau)d\tau&\les g(1, \tau_1)+\left(\ep_2+\frac{\tau_1^{-\a}}{\ep_2}+\ep_4+\frac{\tau_1^{-2\a}}{\ep_2^2}\right)\int_{\tau_1}^{\tau_2}
E[\phi](\tau)d\tau+\int_{\tau_1}^{\tau_2}\tau^{-\a}D^{2\a}[F]_{\tau}^{\tau_2}d\tau\\
&+\tau_1^{1-\a}E[\phi](\tau_1)+\tau_1^{1-\a}
D^{2\a}[F]_{\tau_1}^{\tau_2}+\tau_1^{-2\a} \bar G[0,
2\a]_{\tau_1}^{\tau_2}+\frac{1}{\ep_2}G[1+2\a,
1+2\a]_{\tau_1}^{\tau_2},
\end{align*}
where we choose $\ep_3=1$. Assume the implicit constant before
$\int_{\tau_1}^{\tau_2}E[\phi](\tau)d\tau$ in the above inequality
is $C_3$, which is independent of $\ep_2$ and $\ep_4$. Then take
\[\ep_4=\frac{1}{2C_3}.\]
We get
\begin{align}
\notag
  &g(1, \tau_2)+ \int_{\tau_1}^{\tau_2}E[\phi](\tau)d\tau\les g(1, \tau_1)+\left(\ep_2+\frac{\tau_1^{-\a}}{\ep_2}+\frac{\tau_1^{-2\a}}{\ep_2^2}\right)\int_{\tau_1}^{\tau_2}
E[\phi](\tau)d\tau+\int_{\tau_1}^{\tau_2}\tau^{-\a}D^{2\a}[F]_{\tau}^{\tau_2}d\tau\\
\label{pWE1barG}
&\qquad\qquad\qquad+\tau_1^{1-\a}E[\phi](\tau_1)+\tau_1^{1-\a}
D^{2\a}[F]_{\tau_1}^{\tau_2}+\tau_1^{-2\a} \bar G[0,
2\a]_{\tau_1}^{\tau_2}+\frac{1}{\ep_2}G[1+2\a,
1+2\a]_{\tau_1}^{\tau_2}.
\end{align}
Now let $C_4$ be the implicit constant before
$\int_{\tau_1}^{\tau_2} E[\phi](\tau)d\tau$ in both
~\eqref{pWE1abarG} and ~\eqref{pWE1barG}, which is independent of
$\ep_2$. Then let
\[
 \ep_2=\frac{1}{4C_4}
\]
and choose a constant $T_0\geq t_0$ such that
\[
 T_0^{-\f12\a}\leq\frac{\a}{2}\ep_2.
\]
In particular, for $\tau_1\geq T_0$, we have
\[
 C_4\left(\ep_2+\frac{\tau_1^{-\a}}{\ep_2}+\frac{\tau_1^{-2\a}}{\ep_2^2}\right)
 \leq C_4\left(\ep_2+\frac{T_0^{-\a}}{\ep_2}+\frac{T_0^{-2\a}}{\ep_2^2}\right)\leq
 \f12.
\]
We combine ~\eqref{pWE1abarG} and ~\eqref{pWE1barG} together to control $\int_{\tau_1}^{\tau_2}E[\phi](\tau)d\tau$.
For $\tau_2\geq \tau_1\geq T_0$, we first estimate $\bar G[0,
2\a]_{\tau_1}^{\tau_2}$, $G[1+2\a, 1+2\a]_{\tau_1}^{\tau_2}$ in
~\eqref{pWE1barG} by using ~\eqref{pWE1abarG}. Then combining all them
together, we can show that the coefficient of
$\int_{\tau_1}^{\tau_2}E[\phi](\tau)d\tau$ on the right hand side
can be bounded by
\[
\f12+C_4T_0^{-2\a}
\frac{e}{2}+\frac{C_4}{\ep_2}\frac{T_0^{-2\a}}{2\a}\frac{e}{2}<\frac{3}{4}.
\]
Thus $\int_{\tau_1}^{\tau_2}E[\phi](\tau)d\tau$ can be absorbed and we can conclude that
\begin{align*}
  g(1, \tau_2)+ \int_{\tau_1}^{\tau_2}E[\phi](\tau)d\tau&\les g(1, \tau_1)+\tau_1^{1-\a} D^{2\a}[F]_{\tau_1}^{\tau_2}+\tau_1^{1-\a}E[\phi](\tau_1)+\int_{\tau_1}^{\tau_2}\tau^{-\a}D^{2\a}[F]_{\tau}^{\tau_2}d\tau\\
&\qquad+\tau_1^{-2\a}\int_{\tau_1}^{\tau_2}\tau^{\f12\a}D^{2\a}[F]_{\tau}^{\tau_2}d\tau+\tau_1^{-2\a}g(1+2\a,
\tau_1),
\end{align*}
which, in turn, improves ~\eqref{pWE1abarG} to
\begin{equation*}
 \begin{split}
  g(1+2\a, \tau_2)+ \bar G[0, 2\a]_{\tau_1}^{\tau_2}&\les g(1+2\a, \tau_1)+\tau_1^{1-\a}E[\phi](\tau_1)+
  \tau_1^{1+\f12\a}
  D^{2\a}[F]_{\tau_1}^{\tau_2}+\int_{\tau_1}^{\tau_2}\tau^{\f12\a}D^{2\a}[F]_{\tau}^{\tau_2}d\tau.
 \end{split}
\end{equation*}
This proves ~\eqref{pWE1} and ~\eqref{pWE1a} for
all $\tau_2\geq\tau_1\geq T_0$.

\bigskip

For $t_0\leq \tau_1\leq \tau_2\leq T_0$, we make use of the boundedness of $\tau$. Let $\ep_2=1$. Inequality
~\eqref{pWE1abarG} shows that
\begin{equation*}
 g(1+2\a, \tau_2)+\bar G[0, 2\a]_{\tau_1}^{\tau_2}\les g(1+2\a, \tau_1)+\int_{\tau_1}^{\tau_2} E[\phi]
 (\tau)d\tau+ D^{2\a}[F]_{\tau_1}^{\tau_2}+E[\phi](\tau_1).
\end{equation*}
By Lemma ~\ref{lgbarE}, we have
\begin{equation*}
 \bar G[1+2\a, \a]_{\tau_1}^{\tau_2}\les \bar G[0, 2\a]_{\tau_1}^{\tau_2}+ \bar G[0,0]_{\tau_1}^{\tau_2}
 \les g(1+2\a, \tau_1)+\int_{\tau_1}^{\tau_2} E[\phi](\tau)d\tau+
 D^{2\a}[F]_{\tau_1}^{\tau_2}+E[\phi](\tau_1).
\end{equation*}
Combining with the energy inequality ~\eqref{EING}, we obtain
\[
 E[\phi](\tau_2)\les g(1+2\a, \tau_1)+\int_{\tau_1}^{\tau_2} E[\phi](\tau)d\tau+
 D^{2\a}[F]_{\tau_1}^{\tau_2}+E[\phi](\tau_1).
\]
Thus Gronwall's inequality indicates that
\begin{equation*}
 \int_{\tau_1}^{\tau_2}E[\phi](\tau)d\tau\les g(1+2\a, \tau_1)+
 D^{2\a}[F]_{\tau_1}^{\tau_2}+E[\phi](\tau_1)
\end{equation*}
as $\tau_1\leq \tau_2\leq T_0$. Hence \eqref{pWE1} and
\eqref{pWE1a}
 follow from ~\eqref{pWE1abarG}, ~\eqref{pWE1barG}.

 \bigskip

 For $\tau_1\leq \tau_2\leq t_0$, the finite speed of propagation for
wave equation ~\cite{sogge} shows that $g(p, \tau)$ vanishes. Thus
~\eqref{pWE1}, ~\eqref{pWE1a} hold. For general $\tau_2\geq
\tau_1\geq 0$, divide the interval $[\tau_1, \tau_2]$ into
three(possibly two) such intervals: $[\tau_1, t_0]$, $[t_0, T_0]$
and $[T_0, \tau_2]$. Then ~\eqref{pWE1}, ~\eqref{pWE1a} follow by
combining those three(or two) inequalities together. This completes the proof for ~\eqref{pWE1}, ~\eqref{pWE1a}.

\bigskip

Having proven ~\eqref{pWE1} and ~\eqref{pWE1a}, we can improve the
integrated local energy inequality ~\eqref{EING} and the energy
inequality ~\eqref{ILEG} as follows: Integrate \eqref{pWE1a} from
$\tau_1$ to $\tau_2$. We obtain
\begin{align*}
 &G[2+2\a, 1+2\a]_{\tau_1}^{\tau_2}+\bar G[1+2\a, \a]_{\tau_1}^{\tau_2}\leq G[2+2\a, 1+2\a]_{\tau_1}^{\tau_2}+(1+\tau_1)^{-1-2\a} \bar G[0, 2\a]_{\tau_1}^{\tau_2}\\
&\les
E[\phi](\tau_1)+\tau_1^{-\a}D^{2\a}[F]_{\tau_1}^{\tau_2}+(1+\tau_1)^{-1-2\a}
\left(g(1+2\a,
\tau_2)+\int_{\tau_1}^{\tau_2}\tau^{\f12\a}D^{2\a}[F]_{\tau}^{\tau_2}d\tau\right),
\end{align*}
which, together with \eqref{EING}, ~\eqref{ILEG}, implies ~\eqref{ILE0}, ~\eqref{eb}. We thus finished proof for Proposition
\ref{mainprop}.

\bigskip

To show Corollary ~\ref{D2aQp}, take $p=1+2\a$ in ~\eqref{phi21a}.
Interpolation shows that
\begin{equation*}
\int_{S_\tau}r^{2\a}\phi^2dvd\om\les
E[\phi](\tau)+E[\phi](\tau)^{1-\frac{2\a}{1-\a}}g(1+2\a,
\tau)^{\frac{2\a}{1-\a}}.
\end{equation*}
Using Jensen's inequality, we have
\begin{align*}
 \int_{\tau_1}^{\tau_2}\int_{S_\tau}\frac{r^{2\a}\phi^2}{(1+\tau)^{1+8\a}}dvd\om d\tau&
 \les \int_{\tau_1}^{\tau_2}\frac{E[\phi](\tau)}{(1+\tau)^{1+2\a}}d\tau+G[4-\a,
 1+2\a]_{\tau_1}^{\tau_2}.
\end{align*}
Therefore for $\tau_2\geq \tau_1\geq t_0$, Lemma ~\ref{nullQ} and
Proposition ~\ref{mainprop} imply that
\begin{align*}
 D^{2\a}[N]_{\tau_1}^{\tau_2}&=\int_{\tau_1}^{\tau_2}\int_{\Si_\tau}|B^{\a\b}\pa_\a\Phi\cdot\pa_\b\phi|^2(1+r)^{1+2\a}d\vol\\
&\les\int_{\tau_1}^{\tau_2}\int_{\Si_\tau}\frac{|\bar\pa\phi|^2}{(1+r)^{1+\a}}d\vol+\int_{\tau_1}^{\tau_2}\int_{S_\tau}|rN|^2 r^{1+2\a}dvd\om d\tau\\
&\les \int_{\tau_1}^{\tau_2}\int_{\Si_\tau}\frac{|\bar\pa\phi|^2}{(1+r)^{1+\a}}d\vol+\int_{\tau_1}^{\tau_2}\int_{S_\tau}\frac{r^{2\a}|\overline{\pa_v}\psi|^2}{(1+\tau)^{1+8\a}}+\frac{|\pa\psi|^2}{(1+r)^{1+4\a}}+\frac{r^{2\a}\phi^2}{(1+\tau)^{1+8\a}} dvd\om d\tau\\
&\les \int_{\tau_1}^{\tau_2}\int_{\Si_\tau}\frac{|\bar\pa\phi|^2}{(1+r)^{1+\a}}d\vol+\bar G[1+8\a, 2\a]_{\tau_1}^{\tau_2}+\int_{\tau_1}^{\tau_2}\frac{E[\phi](\tau)}{(1+\tau)^{1+2\a}}d\tau+G[2+\a, 1+2\a]_{\tau_1}^{\tau_2}\\
&\les E[\phi](\tau_1)+D^{\a}[F]_{\tau_1}^{\tau_2}+(1+\tau_1)^{-\a}D^{2\a}[F]_{\tau_1}^{\tau_2}\\
&\qquad+(1+\tau_1)^{-1-2\a}\left(g(1+2\a,
\tau_2)+\int_{\tau_1}^{\tau_2}\tau^{\f12\a}
D^{2\a}[F]_{\tau}^{\tau_2}d\tau\right).
\end{align*}
For $\tau_1\leq \tau_2\leq t_0$, notice that
\[
 D^{2\a}[N]_{\tau_1}^{\tau_2}\les\int_{\tau_1}^{\tau_2}E[\phi](\tau)d\tau.
\]
For the linear terms $L(\pa\phi)$, we can show
\begin{align*}
D^{2\a}[L]_{\tau_1}^{\tau_2}=\int_{\tau_1}^{\tau_2}\int_{\Si_\tau}|L(\pa\phi)|^2(1+r)^{1+2\a}d\vol&\les
\int_{\tau_1}^{\tau_2}\int_{\Si_\tau}
(1+r)^{-2-6\a}|\pa\phi|^2(1+r)^{1+2\a}d\vol\\
&\les
\int_{\tau_1}^{\tau_2}\int_{\Si_\tau}\frac{|\pa\phi|^2}{(1+r)^{1+\a}}d\vol.
\end{align*}
The corollary then follows from \eqref{ILE0} and ~\eqref{eb}.

\section{Decay of the Solution}
Under appropriate assumptions on the inhomogeneous term $F$,
Proposition \ref{mainprop} leads to the decay of the energy flux
$E[\phi](\tau)$. After commuting the equation with the vector fields
$Z$, we obtain the pointwise decay of the solution outside the
cylinder $\{(t, x)| |x|\leq R\}$ by using Sobolev embedding and
inside the cylinder by using elliptic estimates.

\begin{prop}
 \label{energydecay}
Suppose there is a constant $C_1$ such that
\[
 D^{2\a}[F]_{\tau_1}^{\tau_2}\leq C_1(1+\tau_1)^{-1-\a}, \qquad \forall \tau_2\geq \tau_1\geq
 0.
\]
Then for solution $\phi$ of the linear wave equation
\eqref{LWAVEEQ}, we have energy flux decay
\[
 E[\phi](\tau)\les (\ep^2 E_0+C_1)(1+\tau)^{-1-\a}.
\]
\end{prop}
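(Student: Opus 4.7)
The overall strategy is to apply the Dafermos--Rodnianski $p$-weighted hierarchy from Proposition \ref{mainprop}, combined with pigeonhole and the energy bound \eqref{eb}, to bootstrap the decay rate of $E[\phi](\tau)$ from constant through $\tau^{-k\a}$ up to the desired $\tau^{-1-\a}$.

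First, I would use \eqref{pWE1a} with $\tau_1=0$ to obtain a uniform bound on the weighted flux $g(1+2\a,\tau)$. By finite speed of propagation from data supported in $\{|x|\leq R_0\}\subset\{|x|<R\}$, one has $\psi\equiv 0$ on $S_0$, so $g(1+2\a,0)=0$, while $E[\phi](0)\les\ep^2 E_0$. The hypothesis $D^{2\a}[F]_{\tau_1}^{\tau_2}\leq C_1(1+\tau_1)^{-1-\a}$, together with
\[
\int_0^{\tau}s^{\frac{\a}{2}}\,D^{2\a}[F]_{s}^{\tau}\,ds\les C_1\int_0^{\tau}(1+s)^{-1-\frac{\a}{2}}\,ds\les C_1,
\]
then yields $g(1+2\a,\tau)\les \ep^2 E_0+C_1=:M_0$ for all $\tau\geq 0$.

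Next, since $r\geq v_\tau$ on $S_\tau$ and $v_\tau=(\tau+R)/2$, one has the free interpolation $g(1,\tau)\leq(1+\tau)^{-2\a}g(1+2\a,\tau)\les M_0(1+\tau)^{-2\a}$. Inserting these bounds into \eqref{pWE1} applied between $\tau_1$ and $2\tau_1$, and estimating the $D^{2\a}[F]$ terms via the hypothesis, I obtain
\[
\int_{\tau_1}^{2\tau_1}E[\phi](\tau)\,d\tau\les M_0(1+\tau_1)^{-2\a}+(1+\tau_1)^{1-\a}E[\phi](\tau_1)+C_1(1+\tau_1)^{-\a}.
\]
Pigeonhole then produces some $\tau^{\ast}\in[\tau_1,2\tau_1]$ with
\[
E[\phi](\tau^{\ast})\les M_0(1+\tau_1)^{-1-2\a}+(1+\tau_1)^{-\a}E[\phi](\tau_1)+C_1(1+\tau_1)^{-1-\a}.
\]
Feeding $\tau^{\ast}$ into the energy bound \eqref{eb}, together with the uniform bound on $g(1+2\a,\cdot)$ from the first step, propagates this pointwise estimate to all later $\tau$, and in particular onto $[2\tau_1,4\tau_1]$.

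This sets up an iteration: if at some stage we know $E[\phi](\tau)\les M_0(1+\tau)^{-s}$ for large $\tau$, the preceding step upgrades the estimate to $E[\phi](\tau)\les M_0(1+\tau)^{-\min\{s+\a,\,1+2\a\}}+C_1(1+\tau)^{-1-\a}$. Starting from the trivial bound $s=0$ (provided by \eqref{eb} applied from $\tau_1=0$) and iterating $\lceil(1+\a)/\a\rceil$ times yields $E[\phi](\tau)\les(\ep^2 E_0+C_1)(1+\tau)^{-1-\a}$, as desired. The main obstacle is precisely the growing factor $(1+\tau_1)^{1-\a}E[\phi](\tau_1)$ on the right of \eqref{pWE1}: this rules out a single-shot derivation of the sharp rate and forces a bootstrap that gains only $\a$ in the decay exponent per pigeonhole cycle. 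A secondary book-keeping point is verifying that every $D^{2\a}[F]$-tail appearing in \eqref{eb} and \eqref{pWE1} inherits the required $(1+\tau_1)^{-1-\a}$ decay from the hypothesis, which is routine.
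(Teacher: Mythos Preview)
Your proposal contains a genuine gap in the second paragraph. You assert that ``$r\geq v_\tau$ on $S_\tau$'' and deduce the ``free interpolation'' $g(1,\tau)\leq(1+\tau)^{-2\a}g(1+2\a,\tau)$. This is false: on $S_\tau=\{u=u_\tau,\ v\geq v_\tau\}$ one has $r=v-u_\tau$, so at the inner edge $v=v_\tau$ the radius is $r=v_\tau-u_\tau=R$, a fixed constant independent of $\tau$. Thus $r$ ranges over $[R,\infty)$ on $S_\tau$, and no $\tau$-dependent lower bound on $r$ is available. Consequently the claimed pointwise-in-$\tau$ bound $g(1,\tau)\les M_0(1+\tau)^{-2\a}$ does not follow, and without it your application of \eqref{pWE1} and the ensuing iteration have no starting point for controlling the $g(1,\tau_1)$ term.

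The paper circumvents exactly this obstacle by extracting, in addition to $g(1+2\a,\tau)\les M_0$, the \emph{integrated} bound $\bar G[0,2\a]_{0}^{\tau_2}\les M_0$ from \eqref{pWE1a}. A pigeonhole on this spacetime integral produces a dyadic sequence $\tau_n$ along which $g(2\a,\tau_n)\les M_0\,\tau_n^{-1}$; H\"older interpolation between this and $g(1+2\a,\tau_n)\les M_0$ then gives $g(1,\tau_n)\les M_0\,\tau_n^{-2\a}$ at those special times. Feeding this into \eqref{pWE1} on $[\tau_n,\tau]$ and combining with \eqref{eb} yields $E[\phi](\tau_{n+1})\les M_0\,\tau_n^{-1-\a}$ in essentially one pass, rather than requiring your $\lceil(1+\a)/\a\rceil$-step bootstrap. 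The key missing idea in your attempt is precisely this use of the integrated $p$-weighted flux to manufacture decay of $g(2\a,\cdot)$ (and hence of $g(1,\cdot)$) along a good sequence of times.
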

\begin{proof} Since the initial data are supported in the region $\{|x|\leq R_0\leq R\}$, the finite speed of propagation shows that $g(1+2\a, 0)$ vanishes. Take $\tau_1=0$ in ~\eqref{pWE1a}. We get
\begin{equation}
\label{pwe1a}
 g(1+2\a, \tau)=\int_{S_\tau}r^{1+2\a}(\pa_v\psi)^2dvd\om\les C_1+\ep^2E_0
\end{equation}
and
\begin{equation}
\label{pwe1ai}
 \int_{\tau_1}^{\tau_2}\int_{S_\tau}r^{2\a} (\pa_v\psi)^2dvd\om d\tau\leq\bar G[0, 2\a]_{0}^{\tau_2}\les C_1+\ep^2
 E_0.
\end{equation}
We claim that we can choose a dyadic sequence
$\{\tau_n\rightarrow\infty\}$ such that
\begin{equation}
\label{dyadicseq}
 \int_{S_{\tau_n}}r^{2\a}(\pa_v\psi)^2dvd\om\leq(1+\tau_n)^{-1}\left(C_1+\ep^2
 E_0\right),
\end{equation}
where $\tau_n$ satisfies the inequality
$\ga^{-2}\tau_n\leq\tau_{n-1}\leq\ga^2\tau_n$ for some large
constant $\ga$. In fact, there exists $\tau_n\in[\ga^{n},
\ga^{n+1}]$ such that \eqref{dyadicseq} holds. Otherwise
$$\int_{\ga^{n}}^{\ga^{n+1}}\int_{S_\tau}r^{2\a}(\pa_v\psi)^2dvd\om d\tau \geq \ln\ga \left(\ep^2 E_0+ C_1\right),$$
which contradicts to ~\eqref{pwe1ai} if $\ga$ is large enough.

Take $\tau=\tau_n$ in ~\eqref{pwe1a}. Interpolate with ~\eqref{dyadicseq}. We obtain
\begin{equation*}
 \int_{S_{\tau_n}}r(\pa_v\psi)^2dvd\om\les (1+\tau_n)^{-2\a}\left(\ep^2 E_0
 +C_1\right).
\end{equation*}
Then the inequality ~\eqref{pWE1} implies that for $\tau\geq\tau_n$
\begin{equation}
\label{pwe10}
 \begin{split}
  \int_{S_\tau}r(\pa_v\psi)^2dvd\om+\int^{\tau}_{\tau_{n}}E[\phi](s)ds&\les
   (1+\tau_{n})^{-2\a}\left(\ep^2E_0+C_1\right)+\tau_{n}^{1-\a}E[\phi](\tau_{n}).
 \end{split}
\end{equation}
On the other hand the energy inequality \eqref{eb} shows that for
all $s\leq \tau$
\[
 E[\phi](\tau)\les E[\phi](s)+(1+s)^{-1-\a}\left(\ep^2 E_0
 +C_1\right).
\]
In particular
\[
 E[\phi](\tau_1)\les E[\phi](0)+\ep^2 E_0 +C_1\les \ep^2 E_0 +C_1.
\]
By ~\eqref{pwe10}, we have
\begin{equation}
\label{tautaun}
 (\tau-\tau_n)E[\phi](\tau)-\int^{\tau}_{\tau_{n}}(1+s)^{-1-\a}\left(\ep^2 E_0 +C_1\right)ds
 \les(1+ \tau_n)^{-2\a}\left(\ep^2
 E_0+C_1\right)+\tau_n^{1-\a}E[\phi](\tau_n).
\end{equation}
In particular for $n=1$
\begin{equation*}
 E[\phi](\tau)\les (1+\tau)^{-1}\left(\ep^2 E_0+C_1\right).
\end{equation*}
Let $\tau=\tau_{n+1}$ in ~\eqref{tautaun}. We obtain
\begin{equation*}
 (\tau_{n+1}-\tau_n)E[\phi](\tau_{n+1})\les (1+\tau_{n})^{-\a}\left(\ep^2
 E_0+C_1\right).
\end{equation*}
Since $\tau_n$ are dyadic, we have
\begin{equation*}
 E[\phi](\tau_{n})\les \tau_{n}^{-1-\a}\left(\ep^2 E_0+C_1\right),\quad \forall
 n.
\end{equation*}
Finally, for $\tau\in[\tau_n, \tau_{n+1}]$, we can show
\[
 E[\phi](\tau)\les E[\phi](\tau_n)+(1+\tau_n)^{-1-\a}\left(\ep^2 E_0+C_1\right)\les (1+\tau_n)^{-1-\a}
 \left(\ep^2 E_0+C_1\right)\les (1+\tau)^{-1-\a}\left(\ep^2
 E_0+C_1\right).
\]
\end{proof}
With the energy flux decay, we can obtain the decay of the spherical
average of the solution.
\begin{cor}
 \label{ptdcoutc}
Assume that there is a constant $C_1$ such that
\[
 D^{2\a}[F]_{\tau_1}^{\tau_2}\leq C_1(1+\tau_1)^{-1-\a}, \qquad \forall \tau_2\geq \tau_1\geq
 0.
\]
Then on the hypersurface $S_\tau$, we have
\begin{align*}
&\int_{\om}|r\phi|^2d\om\les\ep^2 E_0 + C_1, \qquad\qquad \qquad
\qquad r\geq R, \\
&\int_{\om}r|\phi|^2d\om\les(1+\tau)^{-1-\a}\left(\ep^2 E_0 +
C_1\right), \qquad r\geq R.
\end{align*}
\end{cor}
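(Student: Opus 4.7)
The plan is to prove the two bounds separately: the boundedness of $\int_\omega |r\phi|^2\, d\omega$ follows from the sphere estimate already established inside the proof of Proposition \ref{mainprop}, while the $\tau$-decay of $\int_\omega r|\phi|^2\, d\omega$ follows immediately from Lemma \ref{lem1} once we upgrade $\tilde{E}[\phi](\tau)$ to $E[\phi](\tau)$ via the remark after Lemma \ref{prop1}. In both cases the starting point is the energy flux decay $E[\phi](\tau) \les (1+\tau)^{-1-\a}(\ep^2 E_0+C_1)$ from Proposition \ref{energydecay}, together with the already-derived bound $g(1+2\a,\tau) \les \ep^2 E_0 + C_1$ from \eqref{pwe1a}.

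For the first bound, note that $\int_\omega |r\phi|^2\, d\omega = \int_\omega \psi^2\, d\omega$ with $\psi = r\phi$. The argument leading to \eqref{phi2bd} inside the proof of Proposition \ref{mainprop} gives, on $S_\tau$,
\[
\int_\omega |\psi|^2(\tau, v, \omega)\, d\omega \les \tilde{E}[\phi](\tau) + g(1+2\a, \tau).
\]
Thus it suffices to show that $\tilde{E}[\phi](\tau)$ is uniformly bounded by $\ep^2 E_0 + C_1$. Applying the energy inequality \eqref{EING} with $\tau_1 = 0$ and using the hypothesis $D^{2\a}[F]_0^{\tau_2} \leq C_1$ (which also forces $D^\a[F]_0^{\tau_2} \leq C_1$ since $(1+r)^{\a+1} \leq (1+r)^{2\a+1}$), together with the already-bounded $g(1+2\a,\tau)$ used to control the $G$ and $\bar G$ terms, yields $\tilde{E}[\phi](\tau) \les \ep^2 E_0 + C_1$ uniformly in $\tau$. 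Combining gives the first claim.

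For the second bound, I would apply Lemma \ref{lem1}, which gives $r\int_\omega |\phi|^2\, d\omega \leq \tilde{E}[\phi](\tau)$ on $S_\tau$. By the remark following Lemma \ref{prop1}, since $\tilde{E}[\phi](\tau)$ is finite (established above), the modified energy on the right may be replaced by $E[\phi](\tau)$. The assumed energy flux decay from Proposition \ref{energydecay} then immediately gives $r\int_\omega |\phi|^2\, d\omega \leq E[\phi](\tau) \les (1+\tau)^{-1-\a}(\ep^2 E_0 + C_1)$, as desired. The only subtlety is the uniform boundedness of $\tilde{E}[\phi](\tau)$ under the single integrated hypothesis on $F$, but this is routine once one observes that the summability of $(1+\tau)^{-1-\a}$ together with the bounded $p$-weighted energy feeds cleanly into \eqref{EING}.
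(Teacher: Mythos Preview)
Your proposal is correct and follows essentially the same route as the paper: \eqref{phi2bd} together with \eqref{pwe1a} for the first bound, and Lemma~\ref{lem1} together with Proposition~\ref{energydecay} for the second. The only unnecessary detour is your appeal to \eqref{EING} to bound $\tilde{E}[\phi](\tau)$ in the first part; you already have $E[\phi](\tau)\les(\ep^2E_0+C_1)(1+\tau)^{-1-\a}$ from Proposition~\ref{energydecay}, which (after replacing $\tilde{E}$ by $E$ via the remark you invoke anyway) gives the uniform bound directly without revisiting the $G$, $\bar G$ terms.
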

\begin{proof} By Proposition \ref{energydecay}, the first inequality follows from ~\eqref{phi2bd} and
~\eqref{pwe1a}. The second one follows from Lemma ~\ref{lem1}.
\end{proof}

In order to obtain the pointwise decay of the solution which is
usually a consequence of Sobolev embedding, we need energy estimates for
the derivative of the solution. For this purpose, we commute the equation with
the vector fields $\Om$ and $T$. Under appropriate assumptions on
the inhomogeneous term $F$, we hope to derive the same energy decay
for $\Om^k T^j\phi$. Denote
\[
 N(\phi_1, \phi_2)=B^{\mu\nu}\pa_\mu\phi_1\cdot\pa_\nu\phi_2, \qquad \forall \phi_1, \phi_2 \in
 C^{\infty}(\mathbb{R}^{3+1}),
\]
where we recall that the constants $B^{\a\b}$ satisfy the null
condition.

\begin{lem}
\label{nullstructure} Let $Z$ be $\Om$ or $T$. Then
\[
 Z^\b N(\phi_1, \phi_2)=\sum\limits_{\b_1+\b_2=\b} N(Z^{\b_1}\phi_1,
 Z^{\b_2}\phi_2).
\]
\end{lem}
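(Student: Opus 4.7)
The plan is a short induction on $|\b|$, with the base case $|\b|=0$ trivial. It suffices to establish the one-step identity for a single application of $Z$ and iterate; the whole point is that the null structure of $N$ is preserved under commutation with each $Z\in\{T,\Om_{ij}\}$.

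For $Z=T=\pa_t$: the vector field $T$ commutes with every $\pa_\mu$, so the Leibniz rule gives at once
\[
T\,N(\phi_1,\phi_2)=N(T\phi_1,\phi_2)+N(\phi_1,T\phi_2),
\]
with no correction term.

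For $Z=\Om_{ij}$: use the commutator $[\Om_{ij},\pa_\mu]=-\delta_{i\mu}\pa_j+\delta_{j\mu}\pa_i$ (with $\delta_{i0}\equiv 0$). The Leibniz rule then produces
\[
\Om_{ij}N(\phi_1,\phi_2)=N(\Om_{ij}\phi_1,\phi_2)+N(\phi_1,\Om_{ij}\phi_2)+\hat B^{\mu\nu}\pa_\mu\phi_1\pa_\nu\phi_2,
\]
where $\hat B^{\mu\nu}$ is built from $B^{\mu\nu}$ by a specific swap of the $i,j$ spatial indices with appropriate signs. The main obstacle is to check that the correction $\hat B^{\mu\nu}\pa_\mu\phi_1\pa_\nu\phi_2$ is itself a null form, so that it can legitimately be absorbed into an $N(\phi_1,\phi_2)$ term on the right-hand side (reading $N$ as any bilinear form of the same $B^{\mu\nu}\pa_\mu\cdot\pa_\nu$-type whose constants satisfy the null condition). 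To verify this I would test $\hat B^{\mu\nu}\xi_\mu\xi_\nu$ against an arbitrary null covector $\xi$ and use that the null condition on $B$ forces the symmetric part $B^{(\mu\nu)}$ to be proportional to the inverse Minkowski metric $m^{\mu\nu}$; a short computation then collapses $\hat B^{\mu\nu}\xi_\mu\xi_\nu$ to a combination of terms of the form $\xi^i\xi_j-\xi_i\xi^j$, which vanish because the Euclidean and Minkowski metrics agree on spatial indices. Geometrically this is nothing but the standard fact that $\Om_{ij}$ is a Minkowski Killing field and so preserves the class of null forms.

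Iterating this one-step identity $|\b|$ times, each commutator error that appears at an intermediate stage is again a null form in some pair $(Z^{\b_1}\phi_1,Z^{\b_2}\phi_2)$, and relabeling (together with the fact that sums of null forms remain null forms) exhibits $Z^\b N(\phi_1,\phi_2)$ as a finite sum of null forms in $(Z^{\b_1}\phi_1,Z^{\b_2}\phi_2)$ with $\b_1+\b_2=\b$, as claimed. The specific universal constants in this expansion depend only on $B^{\mu\nu}$ and on the commutator structure of $\Om_{ij}$; none of them matter for the applications in subsequent sections beyond the fact that they again satisfy the null condition.
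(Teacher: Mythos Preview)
Your proof is correct and, in one respect, more explicit than the paper's. The paper's one-line argument works in the polar frame: it notes $[\Om,\pa_r]=[\Om,\pa_t]=[\Om,\nabb]=0$ and invokes the null condition on $B^{\a\b}$, the point being that the symmetric null form $Q_0(\phi_1,\phi_2)=m^{\mu\nu}\pa_\mu\phi_1\pa_\nu\phi_2$ can be written as $-\pa_t\phi_1\pa_t\phi_2+\pa_r\phi_1\pa_r\phi_2+\nabb\phi_1\cdot\nabb\phi_2$, which manifestly satisfies the exact Leibniz rule under $\Om$. You instead stay in Cartesian coordinates, compute the commutator correction $\hat B^{\mu\nu}\pa_\mu\phi_1\pa_\nu\phi_2$ directly, and verify it is again a null form. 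Your route is more transparent for the antisymmetric pieces $Q_{\mu\nu}=\pa_\mu\phi_1\pa_\nu\phi_2-\pa_\nu\phi_1\pa_\mu\phi_2$, where the polar-frame coefficients depend on $\om$ and the paper's commutation argument does not literally apply without further comment; there one really does get a correction (e.g.\ $\Om_{12}Q_{13}$ produces a $Q_{23}$ term), exactly as you describe. Both approaches lead to the same conclusion, and your reading of the lemma---that the right-hand side is a finite sum of null forms in $(Z^{\b_1}\phi_1,Z^{\b_2}\phi_2)$ with constants depending only on $B^{\mu\nu}$---is the correct one and is all that the subsequent estimates require.
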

\begin{proof}
 Notice that $[\Om, \pa_r]=[\Om, \pa_t]=[\Om, \nabb]=0$. The lemma then follows from the fact
 that $B^{\a\b}$ satisfy the null condition.
\end{proof}

Based on Corollary ~\ref{D2aQp}, we are able to prove the decay of
the energy flux of $Z^\b\phi$ after commuting the linear equation
\eqref{LWAVEEQ} with $Z^\b$.
\begin{prop}
\label{energydecaycom}
 Assume that there is a constant $C_1$ such that the inhomogeneous term $F$ in ~\eqref{LWAVEEQ} satisfies
  the following condition
\[
D^{2\a}[Z^\b F]_{\tau_1}^{\tau_2}\leq C_1(1+\tau_1)^{-1-\a}, \quad
\forall \tau_2\geq \tau_1\geq 0, \quad \forall \b\leq \b_0
\]
for some multiple indices $|\b_0|\leq 4$. Assume $\Phi$, $L^\mu(t,
x)$ satisfy the conditions in Theorem \ref{maintheorem2}. Then we
have
\begin{align}
 \label{inductionE}
&E[Z^\b\phi](\tau)\les \left(C_1+\ep^2 E_0\right)(1+\tau_1)^{-1-\a},\\
\label{inductionD} &D^{2\a}[N(Z^{\b_1}\Phi,
Z^{\b}\phi)]_{\tau_1}^{\tau_2}+D^{2\a}[Z^{\b_1}L^\mu\cdot
Z^{\b}\pa_\mu\phi)]_{\tau_1}^{\tau_2}\les \left(C_1+\ep^2
E_0\right)(1+\tau_1)^{-1-\a}
\end{align}
for $\forall \b\leq \b_0$, $|\b_1|\leq 4$.
\end{prop}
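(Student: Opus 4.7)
\medskip
\noindent\textbf{Proof proposal.}
The plan is to argue by induction on $|\b|$, treating $Z^\b\phi$ as a solution of a perturbed version of \eqref{LWAVEEQ} whose source term inherits the required $(1+\tau_1)^{-1-\a}$ decay from the inductive hypothesis, and then invoking Proposition \ref{energydecay} and the argument of Corollary \ref{D2aQp}. The base case $|\b|=0$ is precisely Proposition \ref{energydecay} together with Corollary \ref{D2aQp}, once we set $C_1$ as in the hypothesis. The key simple fact behind the commutation is that both $T=\pa_t$ and $\Om_{ij}$ are Killing, so $[Z,\Box]=0$ and therefore $\Box(Z^\b\phi)=Z^\b\Box\phi$.

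For the inductive step, I would apply $Z^\b$ to the equation and use Lemma \ref{nullstructure} to write
\[
Z^\b N(\Phi,\phi)=N(\Phi,Z^\b\phi)+\sum_{\substack{\b_1+\b_2=\b\\ |\b_1|\geq 1}}N(Z^{\b_1}\Phi,Z^{\b_2}\phi),
\]
and similarly expand $Z^\b(L^\mu\pa_\mu\phi)$, using $[T,\pa_\mu]=0$ and $[\Om_{ij},\pa_k]=\delta_{ik}\pa_j-\delta_{jk}\pa_i$, into a principal piece $L^\mu\pa_\mu Z^\b\phi$ plus lower-order pieces of the schematic form $Z^{\b_1}L^\nu\cdot Z^{\b_2}\pa_\mu\phi$ with $|\b_2|<|\b|$. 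Rewriting the equation as
\[
\Box(Z^\b\phi)+N(\Phi,Z^\b\phi)+L^\mu\pa_\mu(Z^\b\phi)=F_\b,
\]
we identify $F_\b$ with $Z^\b F$ minus the lower-order error terms above.

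The main estimate is then to bound $D^{2\a}[F_\b]_{\tau_1}^{\tau_2}\les(C_1+\ep^2 E_0)(1+\tau_1)^{-1-\a}$. The $Z^\b F$ part is given. For each error term $N(Z^{\b_1}\Phi,Z^{\b_2}\phi)$ with $|\b_2|<|\b|$, the inductive hypothesis \eqref{inductionD} applied at level $|\b_2|$ (with $|\b_1|\leq 4$ allowed) gives exactly the desired $(1+\tau_1)^{-1-\a}$ decay. The analogous lower-order $L$-pieces are controlled in the same way, using that $Z^{\b_1}L^\mu$ satisfies the same pointwise bound as $L^\mu$ by the hypotheses of Theorem \ref{maintheorem2}. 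Once $D^{2\a}[F_\b]$ has the right decay, Proposition \ref{energydecay} applied to $Z^\b\phi$ yields \eqref{inductionE}.

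Finally, to close the induction and obtain \eqref{inductionD} at level $|\b|$, I would run the proof of Corollary \ref{D2aQp} verbatim for $Z^\b\phi$ in place of $\phi$, but with $\Phi$ replaced by $Z^{\b_1}\Phi$ and $L^\mu$ replaced by $Z^{\b_1}L^\mu$. This is legitimate because, by hypothesis, $Z^{\b_1}\Phi$ is itself a $(\delta,\a,t_0,R_1,C_0)$-weak wave for every $|\b_1|\leq 4$ and $Z^{\b_1}L^\mu$ satisfies the same decay assumptions as $L^\mu$; the null structure used in Lemma \ref{lnullQ} is unchanged. The anticipated main obstacle is the bookkeeping in the commutator expansion, especially ensuring that the extra derivative terms produced by $[\Om_{ij},\pa_\mu]$ remain within the induction ceiling $|\b|\leq 4$ so that the inductive hypothesis is applicable at every intermediate order, and that the $L$-type error terms can genuinely be absorbed into $F_\b$ rather than contributing new linear terms with unfavorable decay.
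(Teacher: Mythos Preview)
Your proposal is correct and follows essentially the same route as the paper: induction on the order of $Z^\b$, commutation via Lemma \ref{nullstructure} and the Leibniz rule for the $L$-term to obtain an equation of the form $\Box(Z^\b\phi)+N(\Phi,Z^\b\phi)+L(\pa Z^\b\phi)=F_\b$ with lower-order source, then Proposition \ref{energydecay} for \eqref{inductionE} and Corollary \ref{D2aQp} (applied with $Z^{\b_1}\Phi$, $Z^{\b_1}L^\mu$ in place of $\Phi$, $L^\mu$) for \eqref{inductionD}. Your remark about the commutators $[\Om_{ij},\pa_\mu]$ is well taken and in fact makes explicit a point the paper leaves implicit; these extra terms are of the schematic form $L^\nu\cdot\pa_\mu Z^{\b'}\phi$ with $|\b'|<|\b|$ and are harmless for the $D^{2\a}$ estimate.
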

\begin{proof} We prove the proposition by induction. When $\b=0$, ~\eqref{inductionE} follows from Proposition
~\ref{energydecay}. Since $Z^{\b_1}\Phi$ is $(\delta, \a, t_0, R_1,
C_0)$-weak wave, $\forall |\b_1|\leq 4$,  Corollary ~\ref{D2aQp} and
inequality ~\eqref{pwe1a} imply that
\[
 D^{2\a}[N(Z^{\b_1}\Phi, \phi)]_{\tau_1}^{\tau_2}+D^{2\a}[Z^{\b_1}L^\mu\cdot\pa_\mu\phi]_{\tau_1}^{\tau_2}
 \les (C_1+\ep^2 E_0)(1+\tau_1)^{-1-\a},\quad \forall |\b_1|\leq 4.
\]
Assume that ~\eqref{inductionE}, ~\eqref{inductionD} hold for all
$\b'<\b$. Commute the equation \eqref{LWAVEEQ} with $Z^{\b}$. Using
Lemma ~\ref{nullstructure}, we have the equation for $Z^\b\phi$
\begin{equation}
\label{waveeqomktj}
 \Box(Z^\b\phi)+N(\Phi,Z^\b \phi)+L(Z^\b\phi)=Z^\b F-\sum\limits_{\b_1+\b_2\leq\b, \b_2<\b}
 N(Z^{\b_1}\Phi, Z^{\b_2}\phi)+Z^{\b_1}L^\mu\cdot Z^{\b_2}\pa_\mu\phi.
\end{equation}
Since $\b_2<\b$, by the induction assumptions, we get
\begin{equation*}
 D^{2\a}\left[Z^\b F-\sum\limits_{\b_2<\b} N(Z^{\b_1}\Phi, Z^{\b_2}\phi)+Z^{\b_1}L^\mu\cdot
  Z^{\b_2}\pa_\mu\phi\right]_{\tau_1}^{\tau_2}\les\left(C_1+\ep^2 E_0\right)(1+\tau_1)^{-1-\a}.
\end{equation*}
Hence for $Z^\b\phi$, inequality ~\eqref{inductionE} follows
from Proposition ~\ref{energydecay} and inequality \eqref{inductionD} follows from Corollary ~\ref{D2aQp} and
Proposition ~\ref{mainprop}.
\end{proof}

\bigskip

Since the angular momentum $\Om$ is vanishing for $r=0$, we are not able to obtain the pointwise bound of the solution
in the cylinder $\{|x|\leq R\}$ by commuting the equation with $\Om$. We instead rely on elliptic estimates and
 the vector $T=\pa_t$ as commutators.
\begin{lem}
 \label{H2phil}
Assume that there is a constant $C_1$ such that
\[
 D^{2\a}[F]_{\tau_1}^{\tau_2}+ D^{2\a}[\pa_t F]_{\tau_1}^{\tau_2}\leq C_1(1+\tau_1)^{-1-\a},
 \qquad \forall \tau_2\geq \tau_1\geq 0.
\]
Then for solution of the linear wave equation ~\eqref{LWAVEEQ}, we
have
\begin{equation*}
\int_{r\leq R}|\pa^2\phi|^2dx=\sum\limits_{\mu,
\nu=0}^{3}\int_{r\leq R}|\pa_{\mu\nu}\phi|^2dx\les
\left(E_0\ep^2+C_1\right)(1+\tau)^{-1-\a}.
\end{equation*}
\end{lem}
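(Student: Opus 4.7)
My strategy is to use interior elliptic regularity combined with energy control for $\pa_t\phi$ obtained by commuting the equation with $T=\pa_t$. I first apply Proposition \ref{energydecaycom} with $\b=T$ (which uses precisely the hypothesis $D^{2\a}[\pa_t F]\les C_1(1+\tau_1)^{-1-\a}$ that the lemma assumes). This yields
$$E[\pa_t\phi](\tau)\les (C_1+\ep^2 E_0)(1+\tau)^{-1-\a},$$
so combined with Proposition \ref{energydecay}, every second derivative of $\phi$ involving at least one $\pa_t$ already has the desired $L^2(r\leq R)$ decay. Only the pure spatial second derivatives $\pa_i\pa_j\phi$ remain.

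For those, I would rewrite the wave equation as $\De\phi = \pa_t^2\phi + F - N(\Phi,\phi) - L(\pa\phi)$, where $\De=\sum_{i=1}^{3}\pa_i^2$. Picking a smooth cutoff $\chi$ identically $1$ on $\{r\leq R\}$ and supported in $\{r\leq R+1\}$ (enlarging the foliation radius by a fixed amount if needed, which only changes implicit constants), one has $\chi\phi\in C_c^\infty(\mathbb R^3)$, so the flat-space identity $\|\pa^2(\chi\phi)\|_{L^2}^2 = \|\De(\chi\phi)\|_{L^2}^2$ follows by integration by parts. Expanding $\De(\chi\phi) = \chi\De\phi + 2\nabla\chi\cdot\nabla\phi + \phi\De\chi$ and inserting the wave equation gives
$$\int_{r\leq R}|\pa^2\phi|^2\,dx \les \int_{r\leq R+1}\bigl(|\pa_t^2\phi|^2 + |F|^2 + |N|^2 + |L|^2 + |\pa\phi|^2 + \phi^2\bigr)\,dx.$$
Since $|\pa\Phi|,|L^\mu|\leq C_0$ uniformly on $\{r\leq R+1\}$ by the weak-wave definition and the $L$-hypotheses of Theorem \ref{maintheorem2}, one has $|N|+|L|\les |\pa\phi|$. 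Then $E[\pa_t\phi]$, $E[\phi]$, and Lemma \ref{lem2} control the terms $|\pa_t^2\phi|^2$, $|\pa\phi|^2$ and $\phi^2$ with the claimed $(1+\tau)^{-1-\a}$ decay.

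The principal obstacle is controlling $\int_{r\leq R+1}|F(\tau,x)|^2 dx$ pointwise in $\tau$, since the hypothesis provides only the time-integrated bound $D^{2\a}[F]_{\tau_1}^{\tau_2}\les C_1(1+\tau_1)^{-1-\a}$; this is where the assumption on $D^{2\a}[\pa_t F]$ plays its essential role. Setting
$$g(\tau) = \int_{r\leq R+1}|F|^2\,dx,\qquad h(\tau) = \int_{r\leq R+1}|\pa_t F|^2\,dx,$$
the hypothesis gives $\int_{\tau/2}^{\tau}(g+h)\,ds\les C_1(1+\tau)^{-1-\a}$ for $\tau\geq 1$ (for smaller $\tau$ the conclusion is immediate from finite speed of propagation and boundedness of the coefficients). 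By pigeonhole one selects $\tau_*\in[\tau/2,\tau]$ with $g(\tau_*)\les C_1(1+\tau)^{-2-\a}$, and then the fundamental theorem of calculus together with the elementary bound $|g'(s)|=|2\int F\,\pa_t F\,dx|\leq g(s)+h(s)$ yields
$$g(\tau)\leq g(\tau_*) + \int_{\tau_*}^{\tau}\bigl(g(s)+h(s)\bigr)\,ds \les C_1(1+\tau)^{-1-\a},$$
which is the needed pointwise-in-$\tau$ bound on the $F$-contribution. Assembling the three steps proves the lemma.
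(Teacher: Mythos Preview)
Your proof is correct and follows essentially the same strategy as the paper: commute with $T$ to obtain $E[\pa_t\phi](\tau)\les (E_0\ep^2+C_1)(1+\tau)^{-1-\a}$, rewrite $\De\phi=\pa_{tt}\phi+F-N-L$ and invoke elliptic regularity on a slightly larger ball, and convert the fixed-time $L^2$ norm of $F$ into a spacetime integral using the control on $\pa_t F$ (your pigeonhole/FTC argument is equivalent to the paper's one-dimensional Sobolev-in-$t$ step). The one technical difference is that the paper, rather than ``enlarging the foliation radius,'' controls $\int_{r\leq 2R}|\pa_{tt}\phi|^2\,dx$ by applying the energy identity with $X=T$ on the region bounded by $\Si_{\tau-R}$ and $\{t=\tau\}$; this gives the bound directly from $E[\pa_t\phi](\tau-R)$ and the already-established integrated estimates, without re-running the whole machinery at a different radius. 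Your shortcut is legitimate since $R$ is fixed with some freedom, but the paper's version is self-contained.
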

\begin{proof} We first assume that $\tau\geq R$. Take $\b_0=(0, 1)$ in Proposition ~\ref{energydecaycom}. We have
\[
 E[T^j\phi](\tau_1)+\int_{\tau_1}^{\tau_2}\int_{\Si_\tau}\frac{|\bar\pa T^j\phi|^2}{(1+r)^{1+\a}}dxd\tau+
 D^{2\a}[T^j N(\Phi, \phi)+T^j L(\pa\phi)]_{\tau_1}^{\tau_2}\les
 \left(E_0\ep^2+C_1\right)(1+\tau_1)^{-1-\a}
\]
for all $j\leq 1$. Using elliptic estimates ~\cite{elliptic}, we
can show that
\begin{align}
\notag
 &\int_{r\leq R}|\pa^2\phi|^2dx=\int_{r\leq R}\sum\limits_{i, j=1}^{3}|\pa_{ij}\phi|^2dx+2\sum\limits_{\a=0}^{3}\int_{r\leq R}|\pa_\a\pa_t\phi|^2dx\\
\notag
&\qquad\les E[\pa_t\phi](\tau)+\int_{r\leq 2R}|\Delta \phi|^2+|\phi|^2dx\\
\label{H2phi2R}
&\qquad\les E[\pa_t\phi](\tau)+\int_{r\leq 2R}|\pa_{tt}\phi+F-N(\Phi, \phi)-L(\pa\phi)|^2+\phi^2dx\\
\notag &\qquad\les E[\pa_t\phi](\tau)+\int_{r\leq
2R}|\pa_{tt}\phi|^2dx+ \sum\limits_{j\leq
1}\int_{\tau}^{\tau+R}\int_{r\leq 2R}|T^j
F|^2+|T^j\phi|^2+|T^jN|^2+|T^j L|^2dxdt.
\end{align}
Consider the region bounded by $\Si_{\tau-R}$ and $t=\tau$. Take
$X=T$ in ~\eqref{energyeq}. Note that the vector field $T$ is
killing, that is $K^T[\pa_t\phi]=0$. We can conclude that
\begin{equation*}
\label{energyineq2R}
\begin{split}
\int_{r\leq 2R}J^T_\mu[\pa_t\phi]n^\mu d\si&=\int_{\Si_{\tau-R}\cap
\{t\leq \tau\}}J^T_\mu[\pa_t\phi]n^\mu d\si
+\int_{\tau-R}^{\tau}\int_{r\leq R +t-\tau}(TN+TL-\pa_t
F)\pa_{tt}\phi d\vol.
\end{split}
\end{equation*}
Apply Cauchy-Schwartz inequality to the last term. We obtain
\begin{equation*}
 \begin{split}
 & \qquad\int_{\tau-R}^{\tau}\int_{r\leq R +t-\tau}|\pa_t F-TN-TL||\pa_{tt}\phi|d\vol\\
&\les\int_{\tau-R}^{\tau}\int_{\Si_t}|\pa_t F-TN-TL|^2(1+r)^{\a+1}+\frac{|\pa\pa_t\phi|^2}{(1+r)^{\a+1}}dxdt\\
&\les \int_{\tau-R}^{\tau}\int_{\Si_t}\frac{|\pa\pa_t\phi|^2}{(1+r)^{1+\a}}dxdt+
D^{2\a}[\pa_t F]_{\tau-R}^{\tau}+D^{2\a}[TN]_{\tau-R}^{\tau}+D^{2\a}[TL]_{\tau-R}^{\tau}\\
&\les\left(E_0\ep^2+C_1\right)(1+\tau)^{-1-\a}.
 \end{split}
\end{equation*}
Hence we can estimate
\begin{equation*}
\begin{split}
 \int_{r\leq 2R}|\pa_{tt}\phi|^2dx\leq 2\int_{r\leq 2R}J^T_\mu[\pa_t\phi]n^\mu d\si
&\les E[\pa_t\phi](\tau-R)+\int_{\tau-R}^{\tau}\int_{r\leq R +t-\tau}|\pa_t F-TN-TL||\pa_{tt}\phi|d\vol\\
&\les\left(E_0\ep^2+C_1\right)(1+\tau_1)^{-1-\a}.
\end{split}
\end{equation*}
Then from ~\eqref{H2phi2R}, we get
\begin{align*}
 \int_{r\leq R}|\pa^2\phi|^2dx&\les\left(E_0\ep^2+C_1\right)(1+\tau_1)^{-1-\a}+
\sum\limits_{j\leq 1}\int_{\tau-R}^{\tau+R}\int_{\Si_t}\frac{|\bar\pa\phi|^2}{(1+r)^{1+\a}}
+|T^j F|^2+|T^jN|^2+|T^jL|^2\\
&\les \left(E_0\ep^2+C_1\right)(1+\tau_1)^{-1-\a}+
\sum\limits_{j\leq 1}D^{\a}[T^j F]_{\tau-R}^{\tau+R}+D^{2\a}[T^j N]_{\tau-R}^{\tau+R}+D^{2\a}[T^j L]_{\tau-R}^{\tau+R}\\
&\les  \left(E_0\ep^2+C_1\right)(1+\tau_1)^{-1-\a}.
\end{align*}
Thus we have proven the lemma for $\tau\geq R$. When $\tau\leq R$,
the finite speed of propagation shows that the solution of
~\eqref{LWAVEEQ} vanishes when $|x|\geq t+R_0$. Thus we can replace
$\tau-R$ with 0 in the above argument. And the lemma still holds.
\end{proof}

A corollary of the above lemma is the following pointwise decay of
the solution in the cylinder $\{r\leq R\}$.

\begin{cor}
 \label{ptdcinc}
Assume that there is a constant $C_1$ such that
\[
 D^{2\a}[F]_{\tau_1}^{\tau_2}+ D^{2\a}[\pa_t F]_{\tau_1}^{\tau_2}\leq C_1(1+\tau_1)^{-1-\a},
  \qquad \forall \tau_2\geq \tau_1\geq 0.
\]
Then for solution $\phi$ of ~\eqref{LWAVEEQ}, we have
\begin{equation*}
|\phi|^2\les \left(C_1+\ep^2E_0\right)(1+\tau)^{-1-\a}, \qquad r\leq
R.
\end{equation*}
\end{cor}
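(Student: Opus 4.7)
The plan is to combine the energy decay for $\phi$ itself (Proposition~\ref{energydecay}) with the $H^2$ bound on the interior cylinder (Lemma~\ref{H2phil}) and then invoke three-dimensional Sobolev embedding $H^2(B_R) \hookrightarrow L^\infty(B_R)$.

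First, the hypothesis on $F$ in Corollary~\ref{ptdcinc} is stronger than, and in particular implies, the hypothesis of Proposition~\ref{energydecay}. Thus I immediately obtain
\[
E[\phi](\tau) \les \left(C_1 + \ep^2 E_0\right)(1+\tau)^{-1-\a}.
\]
By the definition of $E[\phi](\tau)$, this directly controls $\int_{r \leq R}|\pa \phi|^2\,dx$ with the same decay rate. To control $\int_{r \leq R}\phi^2\,dx$, I apply Lemma~\ref{lem2}, which gives
\[
\int_{r \leq R}\phi^2\,dx \leq (1+R)^2\int_{r\leq R}\Bigl(\frac{\phi}{1+r}\Bigr)^2 dx \les E[\phi](\tau) \les \left(C_1 + \ep^2 E_0\right)(1+\tau)^{-1-\a}.
\]

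Next, exactly the same hypothesis on $F$ and $\pa_t F$ is what Lemma~\ref{H2phil} requires, so I may apply it to conclude
\[
\int_{r\leq R}|\pa^2 \phi|^2\,dx \les \left(C_1+\ep^2 E_0\right)(1+\tau)^{-1-\a}.
\]
Adding the three estimates yields $\|\phi\|_{H^2(B_R)}^2 \les (C_1+\ep^2 E_0)(1+\tau)^{-1-\a}$, where $B_R = \{r \leq R\}$ is the closed ball of radius $R$ in $\mathbb{R}^3$.

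Finally, since the spatial dimension is three, the Sobolev embedding $H^2(B_R) \hookrightarrow L^\infty(B_R)$ (valid because $2 \cdot 2 > 3$ and $B_R$ has smooth boundary) gives
\[
\sup_{r\leq R}|\phi(\tau, x)|^2 \les \|\phi\|_{H^2(B_R)}^2 \les \left(C_1 + \ep^2 E_0\right)(1+\tau)^{-1-\a},
\]
which is the claimed bound. There is no real obstacle here: all the hard analytic work (the elliptic estimate combined with the killing identity for $T$) has already been carried out in Lemma~\ref{H2phil}, and the present corollary is simply an application of a standard Sobolev inequality on the fixed bounded domain $B_R$, together with the pointwise inequality $1 \leq (1+R)/(1+r) \cdot (1+r)$ to transfer the weighted $L^2$ bound from Lemma~\ref{lem2} into an unweighted one.
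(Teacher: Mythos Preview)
Your proof is correct and follows essentially the same strategy as the paper: combine Lemma~\ref{H2phil} with the Sobolev embedding $H^2(B_R)\hookrightarrow L^\infty(B_R)$. The only (minor) difference is in how the zeroth-order term $\int_{r\leq R}\phi^2\,dx$ is controlled: you invoke Lemma~\ref{lem2} together with the energy decay of Proposition~\ref{energydecay}, whereas the paper instead uses a one-dimensional Sobolev inequality in $t$ to write $\int_{r\leq R}\phi^2\,dx \les \int_{\tau}^{\tau+R}\int_{r\leq R}(|\pa_t\phi|^2+\phi^2)\,dx\,dt$ and then appeals to the integrated local energy inequality~\eqref{ILE0}. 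Both routes are valid and of comparable length; yours is arguably a bit more direct since the Hardy-type Lemma~\ref{lem2} is already available.
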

\begin{proof} Using Sobolev embedding and Lemma ~\ref{H2phil}, when $|x|\leq R$, we
can estimate
\begin{align*}
 \phi^2&\les\int_{r\leq R}\sum\limits_{i, j=1}^{3}|\pa_{ij}\phi|^2+\phi^2dx\\
&\les \int_{r\leq R}|\pa^2\phi|^2dx+\int_{\tau}^{\tau+R}\int_{r\leq R}|\pa_t\phi|^2+|\phi|^2dxdt\\
&\les \left(C_1+\ep^2E_0\right)(1+\tau)^{-1-\a},
\end{align*}
where the last step follows from the integrated local energy
inequality ~\eqref{ILE0} restricted to the region $r\leq R$.
\end{proof}

\section{Boostrap Argument}
To solve our nonlinear problem \eqref{THEWAVEEQ}, we use the
standard Picard iteration process. We prove, by a boostrap argument,
that the nonlinear term $D^{2\a}[F]_{\tau_1}^{\tau_2}$ decays, which
leads to the decay of the solution $\phi$. We still denote the
quadratic nonlinearity $A^{\a\b}\pa_\a\phi\pa_\b\phi$ of $F$ in
~\eqref{THEWAVEEQ} as $N(\phi,
\phi)=A^{\mu\nu}\pa_\mu\phi\pa_\nu\phi$, in which the constants
$A^{\mu\nu}$ satisfy the null condition.

\begin{prop}
\label{propboostrap}
 Suppose  $Z^k\Phi$ is $(\delta, \a, t_0, R_1, C_1)-$weak wave
 for all $|k|\leq 3$. Assume
 $$|\pa^2 Z^\b\Phi|\leq C_1,\quad \forall |\b|\leq 1.$$
 Assume the functions $L^\mu(t, x)$, $h^{\mu\nu}(t, x)$ satisfy the conditions in Theorem \ref{maintheorem2}.
If the nonlinearity $F$ in ~\eqref{THEWAVEEQ} satisfies
  \begin{align*}
 &D^{2\a}[Z^\b F]_{\tau_1}^{\tau_2}\leq 2E_0\ep^2 (1+\tau_1)^{-1-\a},
 \quad \forall |\b|\leq 3, \quad \forall \tau_2\geq \tau_1\geq0,\\
 &\int_{r\leq R}|\nabla Z^\b F|^2dx\leq 2E_0 \ep^2(1+\tau)^{-1-\a},
\quad \forall |\b|\leq 1, \quad \forall \tau\geq 0,
\end{align*}
then
\begin{align}
 \label{nullbd}
&D^{2\a}[Z^\b F]_{\tau_1}^{\tau_2}\les E_0^2 \ep^4
(1+\tau_1)^{-1-\a}, \quad \forall |\b|\leq 3, \quad
\forall \tau_2\geq
 \tau_1\geq 0,\\
\label{nullbdnabla} &\int_{r\leq R}|\nabla Z^\b F|^2dx\les E_0^2
\ep^4(1+\tau)^{-1-\a}, \quad \forall |\b|\leq 1, \quad \forall
\tau\geq 0.
\end{align}
\end{prop}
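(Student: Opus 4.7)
The plan is a standard bootstrap: feed the assumed bounds on $Z^\b F$ into the linear theory of Propositions \ref{mainprop} and \ref{energydecaycom} to obtain energy and pointwise decay of $\phi$, then re-estimate $F$ using the specific structure of the nonlinearity and verify the improved bounds \eqref{nullbd}, \eqref{nullbdnabla}. Specifically, with $C_1 = 2E_0\ep^2$, the first bootstrap hypothesis is exactly what Proposition \ref{energydecaycom} requires, yielding $E[Z^\b\phi](\tau)\les \ep^2 E_0(1+\tau)^{-1-\a}$ for all $|\b|\leq 3$, together with the p-weighted bound $g(1+2\a,\tau)\les \ep^2 E_0$ from \eqref{pWE1a}. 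Corollary \ref{ptdcoutc} applied to each $Z^\b\phi$ combined with Sobolev embedding on the 2-spheres (which commute with $\Om_{ij}$) then yields pointwise estimates of the form
\[
|\phi|\les\sqrt{E_0}\ep(1+r)^{-1},\qquad |\pa^{\leq 2}\phi|\les \sqrt{E_0}\ep(1+r)^{-1/2}(1+(t-|x|)_+)^{-1/2-\a/4}
\]
in the exterior $\{r\geq R\}$; in the interior $\{r\leq R\}$, the second bootstrap hypothesis together with Corollary \ref{ptdcinc} and Lemma \ref{H2phil} (applied to $T^j\phi$, $j\leq 1$) provide analogous pointwise control.

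To close \eqref{nullbd}, decompose $F = N(\phi,\phi)+Q(\pa\phi)+O(|\pa\phi|^3)$. For the null quadratic, Lemma \ref{nullstructure} distributes $Z^\b N(\phi,\phi)$ as $\sum N(Z^{\b_1}\phi, Z^{\b_2}\phi)$, and Lemma \ref{lnullQ} trades one bad $\pa$ for a $\pa_v$ or angular derivative. Mimicking the proof of Corollary \ref{D2aQp} (with $Z^{\b_1}\Phi$ there replaced by $Z^{\b_1}\phi$ here), I would bound the factor carrying fewer derivatives pointwise by the estimates above and the other factor by the energy flux and the bound on $g(1+2\a,\cdot)$, producing the desired $E_0^2\ep^4(1+\tau_1)^{-1-\a}$ bound. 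For the non-null quadratic $Q$, the decay $|Z^k h^{\mu\nu}|\les (1+r)^{-3\a/2}$ supplies just enough spatial weight: after pointwise-bounding one factor of $\pa\phi$, the weight $(1+r)^{1+2\a}$ in $D^{2\a}[\cdot]$ is absorbed, reducing matters to an integrated local energy estimate already controlled by the bootstrap input. The cubic and higher terms are strictly easier, since pointwise-bounding two factors of $\pa\phi$ already produces an extra $\ep^2 E_0$ factor.

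The interior bound \eqref{nullbdnabla} is handled in the same spirit: for $|\b|\leq 1$, $|\nabla Z^\b F|^2$ involves at most three derivatives of $\phi$ on the compact slab $\{r\leq R\}$, and interior Sobolev embedding and elliptic estimates (as used in Lemma \ref{H2phil}) reduce it to pointwise bounds on $\pa^{\leq 2}\phi$ already established, yielding the required $\ep^4 E_0^2(1+\tau)^{-1-\a}$ decay.

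The main obstacle is the non-null quadratic $Q(\pa\phi) = h^{\mu\nu}\pa_\mu\phi\pa_\nu\phi$. Without the null condition, there is no $r^{-1}$ gain, and the only available smallness comes from the weak spatial falloff $r^{-3\a/2}$ of $h^{\mu\nu}$ combined with the pointwise decay of $\pa\phi$ at null infinity. Since $\a$ can be arbitrarily small, the margin between these decay rates and the growing weight $(1+r)^{1+2\a}$ inherent in $D^{2\a}[\cdot]$ is narrow, and the bookkeeping must be done carefully; this is precisely why the hypothesis on $h^{\mu\nu}$ in Theorem \ref{maintheorem2} is calibrated to the rate $3\a/2$.
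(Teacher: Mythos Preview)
Your overall architecture is right, but the step ``mimic Corollary~\ref{D2aQp} with $Z^{\b_1}\phi$ in place of $Z^{\b_1}\Phi$'' has a genuine gap. That corollary's proof uses the weak-wave hypothesis $|\pa_v\Phi|\les (1+r)^{-1-3\a}$ to dispose of the term $|\pa_v\Phi|\,|\pa\psi|$ coming from Lemma~\ref{lnullQ}; for $\phi$ you only have $r|\pa_v\phi|\les \sqrt{E_0}\,\ep$ pointwise (from the bound on $\pa_v\psi$), which is weaker by a factor $r^{3\a}$ and leaves $\int_{S_\tau} r^{1+2\a}|\pa\phi_2|^2\,dvd\om$ uncontrolled by integrated local energy. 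The same arithmetic undermines your treatment of $Q$: after putting one factor of $\pa\phi$ in $L^\infty$ the surviving integrand carries weight $r^{-\a}$ on $|\pa\phi_2|^2$, whereas \eqref{ILE0} supplies only $r^{-1-\a}$.

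The paper closes this gap with two additional ingredients you did not anticipate. First, instead of Lemma~\ref{lnullQ} it uses the symmetric expansion of Lemma~\ref{lemnullform}, which isolates the dangerous contributions as $|\pa_v\psi_1|^2|\pa_u\psi_2|^2$ (and its mirror). Second, the $\pa_u\psi$ factor is controlled in $L^2$ along $v=\textnormal{const}$ via the flux estimate of Lemma~\ref{crossnullen}, which is where the $(1+\tau_1)^{-1-\a}$ decay actually enters; the $\pa_v\psi$ factor is then handled by a $\sup_u$ bound obtained by integrating the wave equation \eqref{waveqpsi} in $u$ and invoking \eqref{pwe1a}--\eqref{pwe1ai}. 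For $Q$, the paper does not rely on the $(1+r)^{-3\a/2}$ decay of $h$ directly but again rewrites $|\pa\phi_1||\pa\phi_2|$ in null coordinates, reducing to the null-form case plus lower-order pieces involving undifferentiated $\phi$ or $\pa_v\phi$, both of which genuinely carry an extra $r^{-1}$. Your interior argument for \eqref{nullbdnabla} is essentially what the paper does.
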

\begin{remark}
\label{remarkpropb} If the given function $\Phi$ is assumed as in
Theorem \ref{maintheorem2}, that is,  $Z^\b\Phi$ is $(\delta, \a,
t_0, R_1, C_1)-$weak wave for all $|\b|\leq 4$ and $|\pa^2Z^\b
\Phi|\leq C_1$ for all $|\b|\leq 2$, then the above proposition
holds if we replace $|\b|\leq 3$, $|\b|\leq 1$ with $|\b|\leq 4$,
$|\b|\leq 2$ respectively. The reason that we formulate the
proposition as above is that three derivatives are the minimum to
close the boostrap argument. Four derivatives is needed to obtain
$C^2$ solution of the equation \eqref{THEWAVEEQ}.
\end{remark}

The proof for Proposition \ref{propboostrap} is quite similar to
that in \cite{yang1}. For completeness, we repeat it here. Since
  higher order nonlinearity decays much better, we only consider the quadratic nonlinearities
  $N(\phi, \phi)$ and $Q(\pa\phi)=h^{\mu\nu}(t, x)\pa_\mu\phi\pa_\nu\phi$.
  First, Lemma ~\ref{nullstructure} and the
assumptions on $h^{\mu\nu}(t, x)$
\[
|Z^\b h^{\mu\nu}|\les (1+r)^{-\frac{3}{2}\a},\quad \forall |\b|\leq
4
\]
 imply that
\begin{equation}
 \label{nullbdsplit}
\begin{split}
D^{2\a}[Z^\b
F]_{\tau_1}^{\tau_2}&\les\sum\limits_{\b_1+\b_2\leq\b}\int_{\tau_1}^{\tau_2}\int_{\Si_\tau}
|N(Z^{\b_1}\phi, Z^{\b_2}\phi)|^2(1+r)^{1+2\a}+|\pa Z^{\b_1}\phi|^2|\pa Z^{\b_2}|^2(1+r)^{1-\a}dxd\tau\\
&\les \sum\limits_{\b_1+\b_2\leq\b}\int_{\tau_1}^{\tau_2}\int_{r\leq
R}|\pa\phi_1|^2|\pa\phi_2|^2dxd\tau
+\sum\limits_{\b_1+\b_2\leq\b}\int_{\tau_1}^{\tau_2}\int_{S_\tau}|N(\phi_1,
\phi_2)|^2 r^{1+2\a}dxd\tau\\
&\qquad+\sum\limits_{\b_1+\b_2\leq\b}\int_{\tau_1}^{\tau_2}\int_{S_\tau}|\pa\phi_1|^2|\pa\phi_2|^2(1+r)^{1-\a}dxd\tau,
\end{split}
\end{equation}
where we denote $\phi_1=Z^{\b_1}\phi$, $\phi_2=Z^{\b_2}\phi$. We
estimate the three integrals on the right hand side of
~\eqref{nullbdsplit} separately. We use elliptic estimates as well
as the extra boostrap assumption ~\eqref{nullbd} to bound the first
integral. Estimates of the second integral rely on the null
structure of $N(\phi_1, \phi_2)$. The third integral follows from
the integrated local energy inequality \eqref{ILE0}.

\subsection{Proof of the ~\eqref{nullbd} in the region $r\leq R$}
When $r\leq R$, we use elliptic estimates to
obtain the pointwise bound of the solution. However since one can only
obtain elliptic estimates in a smaller region, we divide the region
$r\leq R$ into two parts: $r\leq \frac{R}{2}$ and $r\geq
\frac{R}{2}$. In the smaller region $r\leq \frac{R}{2}$, we use elliptic estimates while in the region
$r\geq \frac{R}{2}$, we use Sobolev embedding.

\bigskip

Recall that $\phi_1=Z^{\b_1}\phi$, $\phi_2=Z^{\b_2}\phi$,
$|\b_1|+|\b_2|\leq 3$. Without loss of generality, assume
$|\b_1|\leq |\b_2|$. In particular we have $|\b_1|\leq 1$. For
$r\leq \frac{R}{2}$, we claim that
\begin{equation}
 \label{ptbdpaphi}
|\pa\phi_1|^2\les E_0\ep^2(1+\tau)^{-1-\a},\quad r\leq \frac{R}{2}.
\end{equation}
We first verify ~\eqref{ptbdpaphi} for $\pa_t\phi_1$. By ~\eqref{waveeqomktj},  $\pa_t\phi_1$ satisfies the following equation
\[
 \Box (\pa_t\phi_1)+N(\Phi, \pa_t\phi_1)+L(\pa_t \phi_1)=F_1.
\]
Since $|\b_1+(0, 2)|\leq 3$, estimates ~\eqref{inductionD} imply
that
\[
 D^{2\a}[F_1]_{\tau_1}^{\tau_2}+ D^{2\a}[\pa_t F_1]_{\tau_1}^{\tau_2}\les E_0\ep^2
 (1+\tau_1)^{-1-\a}.
\]
Thus by Corollary ~\ref{ptdcinc}, we have
\[
 |\pa_t\phi_1|^2\les E_0\ep^2 (1+\tau)^{-1-\a}, \qquad r\leq
 \frac{R}{2}.
\]
For $\nabla \phi_1$, notice that $\phi_1=Z^{\b_1}\phi$, $|\b_1|\leq
1$ and $|\pa^2 Z^{\b_1}\Phi|\les 1$, $|\pa Z^{\b_1}L^\mu|\les1$.
Using elliptic estimates and Lemma \ref{H2phil}, we have for
$|x|\leq \frac{R}{2}$
\begin{equation}
\label{Caest}
\begin{split}
\|\nabla\phi_1\|_{C^\f12(B_{\f12 R})}^2&\les \int_{r\leq \frac{R}{2}}\sum\limits_{i, j=1}^{3}|\pa_{ij}\nabla\phi_1|^2+|\nabla\phi_1|^2dx\\
&\les \int_{r\leq R}|\nabla\Delta\phi_1|^2+ |\nabla\phi_1|^2dx\\
&\les \int_{r\leq R}|\nabla\left(\pa_{tt}\phi_1+Z^{\b_1}F-Z^{\b_1}N(\Phi, \phi)-Z^{\b_1}L(\pa\phi)\right)|^2+ |\nabla\phi_1|^2dx\\
&\les E[\pa_{tt}\phi_1](\tau)+E[\phi_1](\tau)+\int_{r\leq R}|\nabla
Z^{\b_1}F|^2+|\nabla^2 \phi_1|^2+|\nabla^2\phi|^2+|\nabla
\phi_1|^2+|\nabla\phi|^2dx\\
&\les E_0\ep^2(1+\tau_1)^{-1-\a},
\end{split}
\end{equation}
where $B_{\f12 R}$ denotes the ball $\{r\leq \f12 R\}$ in $\mathbb{R}^3$.
 Hence we have proven ~\eqref{ptbdpaphi}, which implies that
\begin{align*}
\int_{\tau_1}^{\tau_2}\int_{r\leq \frac{R}{2}}|\pa\phi_1|^2|\pa\phi_2|^2dx &\les\int_{\tau_1}^{\tau_2}(1+\tau)^{-1-\a}E_0 \ep^2 \int_{r\leq\frac{
R}{2}}|\pa\phi_2|^2dxd\tau\\
&\les E_0 \ep^2\int_{\tau_1}^{\tau_2}(1+\tau)^{-1-\a}E[\phi_2](\tau)d\tau\\
&\les E_0^2 \ep^4(1+\tau)^{-1-\a}.
\end{align*}

\bigskip

In the region $\frac{R}{2}\leq r\leq R$, we use the angular momentum
$\Om$. By Sobolev embedding on the unit sphere, we have
\begin{equation}
 \label{SemSphere}
\int_{\om}|\pa\phi_1|^2\cdot|\pa\phi_2|^2d\om\les\sum\limits_{1'}\int_{\om}|\pa\phi_{1'}|^2
d\om\cdot\int_{\om}|\pa\phi_{2}|^2d\om,
\end{equation}
where we still denote $\phi_{1'}=Z^{\b_{1'}}\phi$ for $\b_{1'}\leq
\b_1+(2, 0)$.
 Notice that $|\b_{1'}|+|\b_2|\leq 3+2=5$. Without loss of generality, we assume $|\b_2|\leq 2$. Thus
  by Lemma ~\ref{H2phil}, we have
\begin{align}
\label{SemrR} &\int_{\om}|\pa\phi_{2}|^2d\om\les
\int\limits_{\frac{R}{2}\leq r\leq R}|\pa\phi_{2}|^2+
|\pa_r\pa\phi_{2}|^2dx\les E[\phi_{2}](\tau)+\int_{r\leq
R}|\pa^2\phi_{2}|^2dx\les (1+\tau)^{-1-\a}E_0 \ep^2,
\end{align}
where we have used ~\eqref{inductionD}, ~\eqref{waveeqomktj} and the
assumption $|\b_2|\leq 2$ to verify the conditions in Lemma
~\ref{H2phil}. Since $|\b_{1'}|\leq 3$, we can show that
\begin{align*}
\int_{\tau_1}^{\tau_2}\int_{\f12 R\leq r\leq R}|\pa\phi_1|^2|\pa\phi_2|^2dxd\tau &\les\int_{\tau_1}^{\tau_2}\int_{\frac {R}{2}}^{R}\int_{\om}|\pa\phi_1|^2|\pa\phi_2|^2d\om \quad r^2drd\tau\\
&\les\sum\limits_{1'} \int_{\tau_1}^{\tau_2}\int_{\frac {R}{2}}^{R}\int_{\om}|\pa\phi_{1'}|^2d\om\int_{\om}|\pa\phi_{2}|^2d\om\quad r^2drd\tau\\
&\les \sum\limits_{1'}\int_{\tau_1}^{\tau_2}(1+\tau)^{-1-\a}E_0 \ep^2 \int_{r\leq R}|\pa\phi_{1'}|^2dxd\tau\\
&\les E_0^2 \ep^4(1+\tau)^{-1-\a}.
\end{align*}
 Summarizing, we have shown
\[
\sum\limits_{\b_1+\b_2\leq\b}\int_{\tau_1}^{\tau_2}\int_{r\leq
R}|\pa\phi_1|^2|\pa\phi_2|^2dxd\tau\les E_0^2 \ep^4(1+\tau)^{-1-\a}.
\]
\begin{remark}
 We remark here that ~\eqref{SemrR} is only true when $r$ is bigger that a constant. That is why we need to distinguish the two cases $r\leq \f12 R$ and $r\geq \f12 R$.
\end{remark}

\subsection{Proof of ~\eqref{nullbdnabla}}
Note that when $|\b|\leq 1$ and $\b_1+\b_2=\b$, we have $\b_1=0$ or
$\b_2=0$. By~\eqref{SemSphere}, we have
\begin{equation*}
\begin{split}
 \int_{\om}|\nabla Z^\b F|^2d\om &\les\int_{\om}|\pa Z^\b\phi|^2\cdot|\pa^2\phi|^2+|\pa\phi|^2|\pa^2 Z^\b\phi|^2d\om\\
&\les\sum\limits_{|\b'|\leq 2}\int_{\om}|\pa
Z^\b\phi|^2d\om\cdot\int_{\om}|\pa^2
Z^{\b'}\phi|^2d\om+\int_{\om}|\pa
Z^{\b'}\phi|^2d\om\cdot\int_{\om}|\pa^2 Z^{\b}\phi|^2d\om,
\end{split}
\end{equation*}
where as pointed out previously, we only have to consider the
quadratic nonlinearities $N(\phi, \phi)$, $Q(\pa\phi)$. For $r\leq
\f12 R$, the inequality ~\eqref{ptbdpaphi} shows that
\[
 |\pa Z^\b\phi|\les \ep^2 E_0(1+\tau)^{-1-\a}, \quad \forall |\b|\leq
 1.
\]
For $\f12 R\leq r\leq R$, the inequality ~\eqref{SemrR} implies that
$$\int_{\om}|\pa Z^{\b'}\phi|^2d\om\les \ep^2 E_0(1+\tau)^{-1-\a}, \quad \forall |\b'|\leq
2.
$$
On the other hand, using Lemma ~\ref{H2phil}, we obtain
$$
\int_{r\leq R}|\pa^2 Z^{\b'}\phi|^2dx\les \ep^2
E_0(1+\tau)^{-1-\a},\quad \forall|\b'|\leq 2.
$$
Therefore, for all $|\b|\leq 1$, we can estimate
\begin{align*}
 \int_{r\leq R}|\nabla Z^\b F|^2dx&\les\ep^2 E_0(1+\tau)^{-1-\a}\sum\limits_{|\b'|\leq 2}
 \int_{r\leq R}|\pa^2 Z^{\b'}\phi|^2dx\les
 E_0^2\ep^4(1+\tau)^{-1-\a}.
\end{align*}
Hence we have proven ~\eqref{nullbdnabla}.

\subsection{Proof of ~\eqref{nullbd} in the region $r\geq R$}
We first consider the quadratic term $N(\phi_1, \phi_2)$. The
p-weighted energy inequality is about $\psi=r\phi$ instead of
$\phi$. For this reason, we expand $N(\phi_1, \phi_2)$ in terms of
$\psi$.
\begin{lem}
\label{lemnullform} Suppose $N(\phi_1,
\phi_2)=A^{\a\b}\pa_\a\phi_1\pa_\b\phi_2$ with constants $A^{\a\b}$ satisfying
the null condition. Then
\begin{equation}
\label{nullform}
 r^4|N(\phi_1, \phi_2)|^2\les\phi_1^2\phi_2^2+\phi_1 ^2\cdot
r^2\pa_r^2\phi_2+|\nabb\psi_1|^2|\nabb\psi_2|^2+|\pa_v\psi_1|^2|\pa_u\psi_2|^2+|\pa_u\psi_1|^2|\pa_v\psi_2|^2,
\end{equation}
 where $v=\frac{t+r}{2}, u=\frac{t-r}{2}$.
\end{lem}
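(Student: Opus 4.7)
The plan is to decompose $N(\phi_1,\phi_2)$ in the null frame $\{\pa_v,\pa_u,\nabb\}$, exhibit the classical property that each resulting product contains at least one \emph{good} derivative ($\pa_v$ or $\nabb$), and then rewrite everything in terms of $\psi_i = r\phi_i$ via the Leibniz identities that couple $\phi$ and $\psi$.

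First I would split $A^{\a\b}$ into its symmetric and antisymmetric parts. The antisymmetric part satisfies the null condition automatically. The symmetric part, being a constant symmetric bilinear form that vanishes as a quadratic form on the null cone, must be a constant multiple of the inverse Minkowski metric $m^{\a\b}$. Hence $N(\phi_1,\phi_2)$ is a linear combination of the standard null forms
\[
Q_0(\phi_1,\phi_2)=m^{\a\b}\pa_\a\phi_1\pa_\b\phi_2, \qquad Q_{\a\b}(\phi_1,\phi_2)=\pa_\a\phi_1\pa_\b\phi_2 - \pa_\b\phi_1\pa_\a\phi_2.
\]

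Next, using $\pa_t=\f12(\pa_v+\pa_u)$, $\pa_r=\f12(\pa_v-\pa_u)$, and $\pa_i = \omega_i\pa_r+(\nabb)_i$ with $\omega_i = x_i/r$, I would expand each null form. For $Q_0$ the $\pa_v\pa_v$ and $\pa_u\pa_u$ contributions coming from $-\pa_t\phi_1\pa_t\phi_2 + \pa_r\phi_1\pa_r\phi_2$ cancel, leaving
\[
Q_0(\phi_1,\phi_2) = -\f12\pa_v\phi_1\pa_u\phi_2 - \f12\pa_u\phi_1\pa_v\phi_2 + \nabb\phi_1\cdot\nabb\phi_2.
\]
For $Q_{ij}$ the radial-radial piece vanishes by antisymmetry in $(i,j)$, and for $Q_{0i}$ a direct computation shows the radial contribution collapses to $\frac{\omega_i}{2}(\pa_u\phi_1\pa_v\phi_2 - \pa_v\phi_1\pa_u\phi_2)$; all remaining terms carry a $\nabb$ factor. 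The upshot is the pointwise bound
\[
|N(\phi_1,\phi_2)| \les |\pa_v\phi_1||\pa_u\phi_2| + |\pa_u\phi_1||\pa_v\phi_2| + |\nabb\phi_1||\pa\phi_2| + |\pa\phi_1||\nabb\phi_2|.
\]
Multiplying by $r^2$ and squaring, I then switch to $\psi_i = r\phi_i$ using $r\pa_v\phi = \pa_v\psi - \phi$, $r\pa_u\phi = \pa_u\psi + \phi$, $r\nabb\phi = \nabb\psi$, and $|r\pa\phi|\les |\pa\psi|+|\phi|$. The good-good angular product supplies $|\nabb\psi_1|^2|\nabb\psi_2|^2$; the good-bad products supply $|\pa_v\psi_1|^2|\pa_u\psi_2|^2 + |\pa_u\psi_1|^2|\pa_v\psi_2|^2$; and the $O(\phi)$ corrections produced by the rescaling generate only cross terms of schematic form $\phi_i^2|\pa\psi_j|^2$ and $\phi_1^2\phi_2^2$, which are absorbed into the $\phi_1^2\phi_2^2$ and $\phi_1^2\cdot r^2\pa_r^2\phi_2$ terms on the right-hand side.

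The main obstacle is the algebraic cancellation inside the null frame: one must verify that the ``doubly bad'' product $\pa_u\phi_1\pa_u\phi_2$ is completely absent from every component of $N(\phi_1,\phi_2)$, which is precisely where the null condition on $A^{\a\b}$ is used (both through the reduction of the symmetric part to $m^{\a\b}$ and through the automatic cancellation of radial-radial pieces in $Q_{\a\b}$). Once this cancellation is in hand, the remainder is a Leibniz-rule bookkeeping exercise; the only mild care required is to group the residual $O(\phi)$ cross terms so that they sort themselves into the five categories prescribed on the right-hand side of the inequality.
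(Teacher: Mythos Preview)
Your argument is correct and uses the same two ingredients as the paper: the null-frame cancellation (no $\pa_u\phi_1\pa_u\phi_2$ term) and the passage $\phi_i\mapsto\psi_i=r\phi_i$. The only difference is the order in which you apply them. The paper first rescales and observes the algebraic identity
\[
r^2 N(\phi_1,\phi_2)=N(\psi_1,\psi_2)\pm\phi_1\phi_2\pm r\,\pa_r(\phi_1\phi_2),
\]
and then applies the null-form bound directly to $N(\psi_1,\psi_2)$, which immediately yields the $|\pa_v\psi_1||\pa_u\psi_2|$, $|\pa_u\psi_1||\pa_v\psi_2|$, $|\nabb\psi_1||\nabb\psi_2|$ terms with no cross-term bookkeeping; the lower-order remainder already sits in the form $\phi_1\phi_2$ and $r\phi_i\pa_r\phi_j$. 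You instead expand $N(\phi_1,\phi_2)$ in the null frame first and then substitute $r\pa\phi_i=\pa\psi_i+O(\phi_i)$, which is equivalent but forces you to chase the $O(\phi)$ corrections through every product. Both routes land on the same five-term schematic bound; the paper's ordering just saves the Leibniz bookkeeping you flagged as the residual obstacle.
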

\begin{proof} In fact, notice that
$$r^2 N(\phi_1, \phi_2)=\phi_1\phi_2+r(\phi_1\phi_2)_r+N(\psi_1, \psi_2)
$$
and
$$|N(\psi_1, \psi_2)|\les
|\pa_v\psi_1|\cdot|\pa_u\psi_2|+|\pa_u\psi_1|\cdot|\pa_v\psi_2|+|\nabb\psi_1|\cdot|\nabb\psi_2|.
$$
Hence the lemma holds.
\end{proof}

To estimate the second term in ~\eqref{nullbdsplit}, it suffices to
handle the terms on the right hand side of ~\eqref{nullform}. We
estimate the first three terms in a uniform way.
   Let $\Phi_1$ be $\phi_1$ or $\nabb\psi_1$; $\Phi_2$ be $\phi_2$, $r\pa_r\phi_2$ and $\nabb\psi_2$ respectively. Recall that $\phi_1=Z^{\b_1} \phi$, $\phi_2=Z^{\b_2}\phi$, $|\b_1|+|\b_2|\leq 3$. Using Sobolev embedding on the unit sphere, we have
\begin{equation*}
 \int_{\om}|\Phi_1|^2|\Phi_2|^2d\om\les\sum\limits_{1',
2'}\int_{\om}|\Phi_{1'}|^2d\om\cdot\int_{\om}|\Phi_{2'}|^2d\om,
\end{equation*}
where we let
\begin{equation}
\label{bs}
\begin{cases}
 \b_{1'}\leq \b_1+(2, 0), \quad\b_{2'}=\b_2,\quad \textnormal{if } |\b_1|\leq 1,\\
\b_{1'}=\b_1$, \quad $\b_{2'}\leq \b_2+(2, 0),\quad \textnormal{if }
|\b_2|\leq 1.
\end{cases}
\end{equation}
In particular $|\b_{1'}|+|\b_{2'}|\leq 5$. For the third case when
$\Phi_1=\nabb\psi_1$, $\Phi_2=\nabb\psi_2$, without loss of
generality, we assume $|\b_{1'}|\leq 2$. Since
$\nabb\psi_1=\Om\phi_1$, $\Phi_{1'}$ can always be written as
$Z^\b\phi$ for some $|\b|\leq 3$. Thus
 by Corollary ~\ref{ptdcoutc}, we have
$$
r^2\int_{\om}|\Phi_{1'}|^2d\om\les\ep^2 E_0,\qquad r\geq R.
$$
Recall that $\Phi_{2'}=\phi_{2'}$, $r\pa_r\phi_{2'}$ or
$\nabb\psi_{2'}$. We always have
\[
 \frac{|\Phi_{2'}|^2}{(1+r)^{3+\a}}\les
 \frac{|\bar\pa\phi_{2'}|^2}{(1+r)^{1+\a}}.
\]
Then the integrated energy inequality ~\eqref{ILE0} implies that
\begin{equation*}
\begin{split}
\int_{\tau_1}^{\tau_2}\int_{S_\tau}r^{2\a-3}\Phi_{1}^2\Phi_{2}^2 d\vol&=\int_{\tau_1}^{\tau_2}\int_{v_\tau}^{\infty}\int_{\om}r^{2\a-1}\Phi_{1}^2\Phi_{2}^2 dvd\om d\tau\\
&\les\sum\limits_{1', 2'} \int_{\tau_1}^{\tau_2}\int_{v_\tau}^{\infty} r^{2\a-3}r^2
\int_{\om}|\Phi_{1'}|^2d\om\int_{\om}|\Phi_{2'}|^2d\om dvd\tau\\
&\les\ep^2 E_0\sum\limits_{1', 2'}\int_{\tau_1}^{\tau_2}\int_{S_\tau}\frac{|\Phi_{2'}|^2}{r^{3-2\a}}dvd\om d\tau\\
&\les\ep^2 E_0 \sum\limits_{1', 2'}\int_{\tau_1}^{\tau_2}\int_{S_\tau}\frac{|\Phi_{2'}|^2}{(1+r)^{3+\a}}d\vol\\
&\les \ep^4 E_0^2(1+\tau_1)^{-1-\a},
\end{split}
\end{equation*}
where we recall that $\a\leq \frac{1}{4}$. We hence have estimated
the first three terms in \eqref{nullform}.

\bigskip

It remains to handle the last two terms
$|\pa_v\psi_1|^2|\pa_u\psi_2|^2$, $|\pa_u\psi_1|^2|\pa_v\psi_2|^2$.
Since they are symmetric,
it suffices to consider $|\pa_v\psi_1|^2|\pa_u\psi_2|^2$. Recall
 that $\psi_1=rZ^{\b_1}\phi$, $\psi_2=rZ^{\b_2}\phi$, $|\b_1|+|\b_2|\leq 3$. Define $\b_{1'}, \b_{2'}$ as in ~\eqref{bs}.
 In particular $|\b_{1'}|+|\b_{2'}|\leq 5$. We have two cases according to $\b_{i'}$, $i=1, 2$.

\bigskip

We first consider the case when $|\b_{1'}|\leq 2$. The idea is that
we bound $|\pa_v\psi_1|$ uniformly and then control
$|\pa_u\psi_2|^2$ by the energy flux through the null hypersurface
$v=constant$. The following lemma shows that the energy flux through
$v=constant$ is bounded.
\begin{lem}
\label{crossnullen}
 Consider the region $D=[u_1, u_2]\times[v_1, \infty)\subset S_\tau \times [\tau_1, \tau_2]$. Under the conditions of proposition ~\ref{propboostrap}, we have the energy flux estimate through the hypersurface $v=const$
$$\int_{u_1}^{u_2}\int_{\om}(\pa_u\psi_2)^2dud\om \les \ep^2
E_0(1+\tau_1)^{-1-\a},
$$
where $\psi_2=r\phi_2=rZ^{\b_2}\phi$.
\end{lem}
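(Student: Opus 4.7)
The idea is to extract the desired flux bound on the incoming null cone $\{v=v_1\}$ from an energy identity with the Killing multiplier $T=\pa_t$ applied to the wave equation \eqref{waveeqomktj} satisfied by $\phi_2=Z^{\b_2}\phi$. Since $T$ is Killing, $K^T\equiv 0$, so Stokes' theorem in $D$ yields an identity of the schematic form
\begin{equation*}
\mathcal F_{v_1}\;\les\;\mathcal F_{u_1}+\mathcal F_{u_2}+\mathcal F_\infty+\left|\int_D T\phi_2\cdot\bigl(F_2-N(\Phi,\phi_2)-L(\phi_2)\bigr)\,d\vol\right|,
\end{equation*}
where $\mathcal F_\Sigma$ denotes the $T$-energy flux through $\Sigma\cap D$ and $F_2$ is the right hand side of \eqref{waveeqomktj}. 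A direct computation in null coordinates identifies $\mathcal F_{v_1}$, up to a harmless numerical constant, with $\int_{u_1}^{u_2}\!\int_\om r^2(\pa_u\phi_2)^2\,du\,d\om\big|_{v=v_1}$. Since $\pa_u\psi_2=r\pa_u\phi_2-\phi_2$, Lemma~\ref{lem1} then gives $\int_{u_1}^{u_2}\int_\om(\pa_u\psi_2)^2\,du\,d\om\les\mathcal F_{v_1}+\tilde E[\phi_2](\tau)$, reducing matters to bounding each term on the right hand side above by $\ep^2 E_0(1+\tau_1)^{-1-\a}$.

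For the boundary fluxes, $\mathcal F_{u_i}$ is a piece of the $E$-flux on $S_{\tau_i}$ with $\tau_i=2u_i+R$, so $\mathcal F_{u_i}\les E[\phi_2](\tau_i)$; and $\mathcal F_\infty\les I[\phi_2]_{\tau_1}^{\tau_2}$. Proposition~\ref{energydecaycom}, combined with the boostrap hypothesis on $D^{2\a}[Z^\b F]$, gives $E[\phi_2](\tau)\les\ep^2 E_0(1+\tau_1)^{-1-\a}$. The analogous decay of $I[\phi_2]_{\tau_1}^{\tau_2}$ follows from the $T$-energy identity on $[\tau_1,\tau_2]$: $I[\phi_2]_{\tau_1}^{\tau_2}\les E[\phi_2](\tau_1)+|\int T\phi_2\cdot\Box\phi_2|$, whose last term is controlled exactly as the bulk term below.

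For the bulk integral, Cauchy--Schwarz with weight $(1+r)^{-(1+\a)/2}$ gives
\begin{equation*}
\left|\int_D T\phi_2\cdot(F_2-N-L)\,d\vol\right|\;\les\;\int_D\frac{|\pa\phi_2|^2}{(1+r)^{1+\a}}\,d\vol+D^{2\a}[F_2]_{\tau_1}^{\tau_2}+D^{2\a}[N]_{\tau_1}^{\tau_2}+D^{2\a}[L]_{\tau_1}^{\tau_2}.
\end{equation*}
The first integral is handled by the integrated local energy estimate \eqref{ILE0} applied to $\phi_2$; Corollary~\ref{D2aQp} (for $\phi_2$) controls $D^{2\a}[N]+D^{2\a}[L]$; and $D^{2\a}[F_2]$ decays by the boostrap assumption on $D^{2\a}[Z^\b F]$ together with \eqref{inductionD} applied to the lower-order source terms $N(Z^{\b_1'}\Phi,Z^{\b_2'}\phi)$ and $Z^{\b_1'}L^\mu\cdot Z^{\b_2'}\pa_\mu\phi$ (with $|\b_2'|<|\b_2|$) that appear on the right hand side of \eqref{waveeqomktj}. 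Each contribution then decays at the required rate $\ep^2 E_0(1+\tau_1)^{-1-\a}$.

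The main difficulty is purely bookkeeping: tracking the schematic lower-order terms generated by commuting the original equation with $Z^{\b_2}$ and verifying each falls into the scope of Proposition~\ref{energydecaycom} and Corollary~\ref{D2aQp}. No genuinely new analytic input is required beyond the estimates already established.
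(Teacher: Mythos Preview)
Your proposal is correct and follows essentially the same approach as the paper: apply the $T$-multiplier energy identity on the region $D$, bound the outgoing-cone fluxes by $E[\phi_2](\tau_i)$ and the bulk term via Cauchy--Schwarz against the integrated local energy and $D^{2\a}$ of the source (using \eqref{inductionD} and the bootstrap hypothesis), then convert $r^2(\pa_u\phi_2)^2$ to $(\pa_u\psi_2)^2$ via the boundary terms $\int_\om r\phi_2^2\,d\om$. The paper handles this last conversion by citing Corollary~\ref{ptdcoutc} directly rather than Lemma~\ref{lem1} plus Proposition~\ref{energydecaycom}, and treats the flux at null infinity implicitly through \eqref{eb}, but these are cosmetic differences.
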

\begin{proof} Back to the energy equation ~\eqref{energyeq}, take $X=T$ on the region $D$. We have
\begin{align*}
\int_{u_1}^{u_2}J_\mu^T[\phi_2]n^{\mu}d\si + \iint\limits_{v\geq v_1, u=u_1 }J_\mu^T[\phi_2]n^{\mu}d\si&=\iint\limits_{v\geq v_1,u=u_2 }J_\mu^T[\phi_2]n^{\mu}d\si+\int_{I_{\tau_1}^{\tau_2}}J_\mu^T[\phi_2]n^{\mu}d\si\\
&\quad+ \int_D \Box\phi_2\cdot \pa_t\phi_2d\vol.
\end{align*}
Using the estimates ~\eqref{inductionD}, we conclude that
\[
 D^{2\a}[\Box\phi_2]_{\tau_1}^{\tau_2}\les E_0\ep^2(1+r)^{-1-\a}.
\]
Then by the integrated local energy inequality \eqref{ILE0} and the
energy inequality ~\eqref{eb}, we can show that
\begin{align*}
\int_{u_1}^{u_2}\int_{\om}r^2(\pa_u\phi_2)^2d\om du &\leq
2\int_{u_1}^{u_2}J_\mu^T[\phi_2]n^{\mu}d\si\les E_0\ep^2
(1+\tau_1)^{-1-\a},
\end{align*}
where notice that $D\subset S_\tau\times[\tau_1, \tau_2]$. Thus by
Corollary ~\ref{ptdcoutc}, we get
\begin{align*}
 \int_{u_1}^{u_2}\int_{\om}(\pa_u\psi_2)^2d\om du &= \int_{u_1}^{u_2}\int_{\om}r^2(\pa_u\phi_2)^2dud\om +
 \left.\int_{\om}r\phi_2^2d\om \right|_{u_1}^{u_2}\les
 E_0\ep^2(1+\tau_1)^{-1-\a}.
\end{align*}
\end{proof}

\bigskip

We continue our proof of \eqref{nullbd} when $|\b_{1'}|\leq 2$.
Lemma \ref{crossnullen} and Sobolev embedding on the unit sphere
show that
\begin{equation*}
\begin{split}
&\int_{\tau_1}^{\tau_2}\int_{S_\tau}r^{2\a-3}|\pa_v\psi_1|^2|\pa_u\psi_2|^2
d\vol\\
&=\int_{v_{\tau_1}}^{\infty}\int_{u_{\tau_1}}^{u(v)}r^{2\a-1}\int_{\om}|\pa_u\psi_2|^2|\pa_v\psi_1|^2d\om du dv\\
 &\leq \int_{v_{\tau_1}}^{\infty}\int_{u_{\tau_1}}^{u(v)}|\pa_u\psi_{2'}|^2 d\om du\cdot \sup\limits_{u}\int_{\om}r^{2\a-1}|\pa_v\psi_{1'}|^2d\om\quad dv\\
& \les \ep^2
E_0(1+\tau_1)^{-1-\a}\int_{v_{\tau_1}}^{\infty}\sup\limits_{u}\int_{\om}r^{2\a-1}|\pa_v\psi_{1'}|^2d\om
dv\\
&\les \ep^2
E_0(1+\tau_1)^{-1-\a}\int_{v_{\tau_1}}^{\infty}\sup\limits_{u}\int_{\om}r^{2\a}|\pa_v\psi_{1'}|^2d\om
dv,
\end{split}
\end{equation*}
where $\b_{1'}, \b_{2'}$ are defined as in ~\eqref{bs}. Now, for all
$ u\in [u_{\tau_1}, u(v)]$, we have
\begin{align*}
r^{2\a}(\pa_v\psi_{1'})^2&\les\left.r^{2\a}(\pa_v\psi_{1'})^2\right|_{u=u_1} + \int_{u_1}^{u_2}r^{2\a}(\pa_v\psi_{1'})^2du \\
&\quad\quad+ \int_{u_1}^{u_2}r^{2\a}(\pa_u\pa_v\psi_{1'})^2du + \int_{u_1}^{u_2}r^{2\a-1}(\pa_v\psi_{1'})^2du\\
&\les \left.r^{2\a}(\pa_v\psi_{1'})^2\right|_{u=u_1} + \int_{u_1}^{u_2}r^{2\a}(\pa_v\psi_{1'})^2du\\
& \quad\quad+ \int_{u_1}^{u_2}r^{2\a}(\lap\psi_{1'})^2+
r^{2\a+2}|Z^{\b_{1'}}(F-N-L)|^2du,
\end{align*}
where we use the wave equation ~\eqref{waveqpsi} in the last step
and $u_1=u_\tau, u_2=u(v)$. Integrate on the unit sphere. We obtain
\begin{align*}
 &\int_{v_{\tau_1}}^{\infty}\sup\limits_{u}\int_{\om}r^{2\a}|\pa_v\psi_{1'}|^2 d\om dv
\les \int_{S_{\tau_{1}}}r^{2\a}(\pa_v\psi_{1'})^2dvd\om\\
& +\int_{\tau_{1}}^{\tau_2}\int_{S_\tau}r^{2\a}(\pa_v\psi_{1'})^2 +
(\nabb\Om\phi_{1'})^2r^{2\a} +r^{2\a+2}|Z^{\b_{1'}}(F-N-L)|^2 dvd\om
d\tau,
\end{align*}
where note that $\nabb=\frac{\Om}{r}$.

We claim that the above inequality can be bounded by $\ep^2 E_0$(up
to a constant). In fact, the first term can be bounded by $\ep^2
E_0$ by ~\eqref{pwe1a}; the second term can be bounded by
 $\ep^2 E_0$  by ~\eqref{pwe1ai}; the third term can be controlled by $(1+\tau_1)^{-1-\a}\ep^2 E_0$
 by the integrated local energy inequality ~\eqref{ILE0}(notice that $|\b_{1'}|+1\leq3$
 and $2\a\leq 1-\a$ as $\a\leq \frac{1}{4}$); the last term can be
 estimated as
\[
 D^{2\a}[Z^{\b_{1'}}F]_{\tau_1}^{\tau_2}+D^{2\a}[Z^{\b_{1'}}N]_{\tau_1}^{\tau_2}+D^{2\a}
 [Z^{\b_{1'}}L]_{\tau_1}^{\tau_2}\les E_0\ep^2(1+\tau_1)^{-1-\a},
\]
where we use the inequality ~\eqref{inductionD}. Summarizing, we
have shown
$$\int_{v_{\tau_1}}^{\infty}\sup\limits_{u}\int_{\om}r^{2\a}|\pa_v\psi_1|^2 d\om dv\les \ep^2
E_0.
$$
In particular, for fixed $r\geq R$, we have
\begin{equation}
\label{pavphipt} \int_{t_2}^{t_1}\int_{\om}|\pa_v\psi_1|^2(t, r,
\om)d\om dt\les \ep^2 E_0,\quad \psi_1=r Z^\b\phi, \quad |\b|\leq 2.
\end{equation}
Therefore
\begin{equation*}
\int_{\tau_1}^{\tau_2}r^{2\a-3}\int_{S_\tau}|\pa_v\psi_1|^2|\pa_u\psi_2|^2
d\vol\les E_0^2\ep^4(1+\tau_1)^{-1-\a}.
\end{equation*}

\bigskip

\textbf{When} $|\b_{2'}|\leq 2$, we control $|\pa_v\psi_1|^2$ by the
energy and bound $\pa_u\psi_2$ uniformly. Similarly, using
~\eqref{pwe1a}, ~\eqref{pwe1ai} and Sovolev embedding, we obtain
\begin{align}
 \notag
 &\int_{\tau_1}^{\tau_2}\int_{S_\tau}r^{2\a-1}|\pa_v\psi_1|^2|\pa_u\psi_2|^2 dvd\om d\tau\\
\label{pro3sup1}
&\les \int_{\tau_1}^{\tau_2}\int_{S_\tau}r^{2\a-1}|\pa_v\psi_{1'}|^2 \cdot\int_{\om}|\pa_u\psi_{2'}|^2d\om \quad dvd\om d\tau\\
\notag
&\les \int_{\tau_1}^{\tau_2}\int_{S_\tau}r^{3\a-1}|\pa_v\psi_{1'}|^2 \cdot r^{-\a}\int_{\om}|\pa_u\psi_{2'}|^2d\om \quad dvd\om d\tau\\
\notag &\les \ep^2 E_0\int_{\tau_1}^{\tau_2}
\sup\limits_{v}r^{-\a}\int_{\om}|\pa_u\psi_{2'}|^2d\om d\tau,
\end{align}
where $\b_{1'}, \b_{2'}$ are defined as in ~\eqref{bs}. For all $v$,
we have
\begin{align*}
 r^{-\a}(\pa_u\psi_{2'})^2&\les \left.r^{-\a}(\pa_u\psi_{2'})^2\right|_{v=v_{\tau_2}}+\left|\int_{v}^{v_{\tau_2}}r^{-1-\a}|\pa_u\psi_{2'}|^2dv\right|\\
&\quad\quad +2\left|\int_{v}^{v_{\tau_2}}r^{-\a}|\pa_u\psi_{2'}\cdot\pa_v\pa_u\psi_{2'}|dv\right|\\
&\les\left.r^{-\a}(\pa_u\psi_{2'})^2\right|_{v=v_{\tau_2}}+\int_{v_\tau}^{\infty}r^{-1-\a}|\pa_u\psi_{2'}|^2dv\\
&\quad\quad + \int_{v_\tau}^{\infty}r^{-1-\a}(\pa_u\psi_{2'})^2dv + \int_{v_\tau}^{\infty}r^{1-\a}(\pa_v\pa_u\psi_{2'})^2dv\\
&\les\left.r^{-\a}(\pa_u\psi_{2'})^2\right|_{v=v_{\tau_2}}+ \int_{v_\tau}^{\infty}\frac{(\pa_u\psi_{2'})^2}{r^{1+\a}}dv\\
&\quad\quad +\int_{v_\tau}^{\infty}r^{1-\a}(\lap\psi_{2'})^2dv+
\int_{v_\tau}^{\infty}r^{3-\a}|Z^{\b_{2'}}(F-N-L)|^2dv,
\end{align*}
where $v_{\tau_2}=\frac{R+\tau_2}{2}$. Integrate on the unit
sphere. We get
\begin{align*}
 &\int_{\tau_1}^{\tau_2}
\sup\limits_{v}r^{-\a}\int_{\om}|\pa_u\psi_{2'}|^2d\om d\tau\les
\int_{\tau_1}^{\tau_2}\int_{\om}\left.r^{-\a}(\pa_u\psi_{2'})^2\right|_{v=v_{\tau_2}}d\tau \\
&\quad\quad+\int_{\tau_1}^{\tau_2}\int_{S_\tau}\frac{(\pa_u\psi_{2'})^2}{r^{1+\a}}+r^{1-\a}
(\nabb\Om\phi_{2'})^2+r^{3-\a}|Z^{\b_{2'}}(F-N-L)|^2dvd\om d\tau.
\end{align*}
We claim that it can be bounded by  $(1+\tau_1)^{-1-\a}\ep^2
E_0$ up to a constant. In fact, the first term can be estimated as follows
$$\int_{\tau_1}^{\tau_2}\int_{\om}\left.r^{-\a}(\pa_u\psi_{2'})^2\right|_{v=v_{\tau_2}}d\tau=
\int_{u_{\tau_1}}^{u_{\tau_2}}\int_{\om}r^{-\a}|\pa_u\psi_{2'}|^2d\om
du\leq\int_{u_{\tau_1}}^{u_{\tau_2}}\int_{\om}|\pa_u\psi_{2'}|^2d\om
du \les(1+\tau_1)^{-1-\a}\ep^2 E_0
$$
by Lemma ~\ref{crossnullen}; the second and third term can also be
controlled by $(1+\tau_1)^{-1-\a}\ep^2 E_0$ by the integrated local
energy estimates ~\eqref{ILE0}(notice that $|\b_{2'}|\leq 2$); the
last term can be estimated as follows
\[
 D^{2\a}[Z^{\b_{1'}}F]_{\tau_1}^{\tau_2}
+D^{2\a}[Z^{\b_{1'}}N]_{\tau_1}^{\tau_2}+
D^{2\a}[Z^{\b_{1'}}L]_{\tau_1}^{\tau_2}\les
E_0\ep^2(1+\tau_1)^{-1-\a}
\]
by the inequality ~\eqref{inductionD}. Hence
\begin{equation}
\label{dupsi}
 \int_{\tau_1}^{\tau_2}  \sup\limits_{v}r^{-\a}\int_{\om}|\pa_u\psi_{2'}|^2d\om d\tau\les
 E_0\ep^2(1+\tau_1)^{-1-\a}.
\end{equation}
Plug this into ~\eqref{pro3sup1}. We obtain
$$\int_{\tau_1}^{\tau_2}\int_{S_\tau}r^{2\a-3}|\pa_v\psi_1|^2|\pa_u\psi_2|^2 d\vol\les \ep^4
E_0^2(1+\tau_1)^{-1-\a}.
$$
Therefore using lemma \ref{lemnullform}, we have shown that
\[
 \int_{\tau_1}^{\tau_2}\int_{S_\tau}|N(\phi_1, \phi_2)|^2 r^{1+2\a}d\vol\les \ep^4
 E_0^2(1+\tau_1)^{-1-\a}.
\]

\bigskip

To show \eqref{nullbd}, it remains to estimate the quadratic term
$Z^\b Q(\pa\phi)$. By \eqref{nullbdsplit}, it suffices to consider
the third integral in \eqref{nullbdsplit}. Notice that
\[
|\pa\phi_1||\pa\phi_2|\les
r^{-2}|\pa_v\psi_1||\pa_u\psi_2|+\sum\limits_{\substack{|\b_{1'}|+|\b_{2'}|\leq
4\\\b_{1'},\b_{2'}\leq 3}}|Z^{\b_{1'}}\phi||\pa
Z^{\b_{2'}}\phi|+(|Z\phi|+|\pa_v\phi|)|\pa Z^3\phi|.
\]
The first term has already been estimated considering that
$r^{1-\a}\leq r^{1+2\a}$(on $S_\tau$, $r\geq R\geq 1$). For the
second term, if $|\b_{1'}|\leq 1$ or $|\b_{2'}|\leq 1$, then using
Sobolev inequality on the unit sphere, we have
\[
\int_{\om}|\phi_{1'}|^2|\pa
\phi_{2'}|^2d\om\les\sum\limits_{|\b_{1''}|, |\b_{2''}|\leq
3}\int_{\om}|\phi_{1''}|^2d\om\cdot
\int_{\om}|\pa\phi_{2''}|^2d\om\les r^{-2}\ep^2E_0\sum\limits_{
|\b_{2''}|\leq 3}\int_{\om}|\pa \phi_{2''}|^2d\om
\]
by \eqref{phi2bd} and \eqref{pwe1a}. Then using the integrated local
energy inequality \eqref{ILE0}, we obtain
\begin{align*}
\int_{\tau_1}^{\tau_2}\int_{S_\tau}|\pa\phi_1|^2|\pa\phi_2|^2
(1+r)^{1-\a}d\vol\les \ep^2
E_0\int_{\tau_1}^{\tau_2}\int_{S_\tau}\frac{|\pa\phi_{2''}|^2}{(1+r)^{1+\a}}d\vol\les
\ep^4 E_0^2(1+\tau_1)^{-1-\a}.
\end{align*}
If both $|\b_{1'}|\geq 2$, $|\b_{2'}|\geq 2$, recall that
$|\b_{1'}|+|\b_{2'}|\leq|\b|+1\leq 4$. We conclude that
$|\b_{1'}|=|\b_{2'}|=2$, for which we use the embedding
\[
\int_{\om}|Z^2\phi|^2|\pa Z^2\phi|^2d\om\les
\|Z^2\phi\|_{H^1(S^2)}^2\|\pa Z^2\phi\|_{H^1(S^2)}^2\les r^{-2}\ep^2
E_0 \|\pa Z^2\phi\|_{H^1(S^2)}^2.
\]
Hence
\begin{align*}
\int_{\tau_1}^{\tau_2}\int_{S_\tau}|\pa\phi_1|^2|\pa\phi_2|^2
(1+r)^{1-\a}d\vol\les \sum\limits_{|\b|\leq 1}\ep^2
E_0\int_{\tau_1}^{\tau_2}\int_{S_\tau}\frac{|\pa Z^2
\Om^\b\phi|^2}{(1+r)^{1+\a}}d\vol\les \ep^4 E_0^2(1+\tau_1)^{-1-\a}.
\end{align*}
For the third term $(|Z\phi|+|\pa_v\phi|)|\pa Z^3\phi|$, using the integrated local energy inequality,
it suffices to show that $$(|Z\phi|+|\pa_v\phi|)^2(1+r)^{2}\les
\ep^2 E_0.$$ In fact, by Corollary \ref{ptdcoutc} and Corollary
\ref{ptdcinc}, we have
\[
(1+r)^2|Z\phi|^2\les \sum\limits_{|\b|\leq 3}\int_{\om}|Z^\b
\phi|d\om\les \ep ^2 E_0.
\]
To bound $|\pa_v\phi|$, notice that $r\geq R$. Inequality
\eqref{pavphipt} implies that
\[
\sum\limits_{|\b|\leq
2}\int_{t_2}^{t_1}\int_{\om}|Z^\b\pa_v\psi|d\om dt\les \ep^2
E_0,\quad \psi=r \phi.
\]
Then using Sobolev embedding on $[t_1, t_2]\times S^2$, we obtain
\[
r^2|\pa_v\phi|^2\leq |\pa_v\psi|^2+|\phi|^2\les\ep^2 E_0.
\]
Therefore
\begin{equation*}
(1+r)^2|\pa \phi|^2\les (1+r)^2(|Z\phi|^2+|\pa_v\phi|^2)\les \ep^2
E_0.
\end{equation*}
 In sum, ~\eqref{nullbd} follows from ~\eqref{nullbdsplit}. And have
 proven Proposition ~\ref{propboostrap}.
\begin{remark}
 We in fact can show that
\[
 D^{1+\a}[Z^\b F]_{\tau_1}^{\tau_2}\les
 E_0^2\ep^4(1+\tau_1)^{-1-\a}.
\]
However, it is sufficient to consider $D^{2\a}[Z^\b
F]_{\tau_1}^{\tau_2}$ in order to close the boostrap argument.
\end{remark}

\section{Proof of the Main Theorems }
We used the foliation $\Si_\tau$, part of which is null, in the
previous argument. However, we do not have a local existence result
with respect to the foliation $\Si_\tau$. To solve the nonlinear
equation \eqref{THEWAVEEQ}, we use the standard Picard iteration
process. Take $\phi_{-1}(t, x)=0$. We solve the following linear
wave equation recursively
\begin{equation}
\label{iteration}
\begin{cases}
 \Box\phi_{n+1}+N(\Phi,\phi_{n+1})+L(\pa\phi_{n+1})=F(\pa\phi_n), \\
 \phi_{n+1}(0,x)=\ep \phi_0(x), \pa_t\phi_{n+1}(0,x)=\ep \phi_1(x).
\end{cases}
\end{equation}
Now suppose the implicit constant in Proposition ~\ref{propboostrap}
is $C_1$,
 which, according to our notation, depends only on $R$, $\a$, $t_0$, $C_0$, $A^{\a\b}$, $B^{\a\b}$. Set
$$\ep_0=\frac{1}{\sqrt{C_1 E_0}}.
$$
Then for all $\ep\leq \ep_0$, we have
\[
 C_1\ep^4 E_0^2\leq \ep^2 E_0.
\]
Thus by the continuity of $F(\pa\phi_n)$, we in fact have shown that
the nonlinear term $F$ satisfies
\begin{equation*}
 D^{2\a}[Z^{\b} F(\pa\phi_n)]_{\tau_1}^{\tau_2}\leq C_1E_0^2 \ep^4
 (1+\tau_1)^{-1-\a}\leq E_0\ep^2(1+\tau_1)^{-1-\a}, \quad \forall |\b|\leq 4, \quad \forall \tau_2\geq
 \tau_1\geq 0.
\end{equation*}
 Therefore, Proposition ~\ref{energydecay} implies that
\begin{equation*}
 E[Z^\b\phi_n](\tau)\les E_0 \ep^2 (1+\tau)^{-1-\a},\quad \forall n,\quad \forall |\b|\leq
 4.
\end{equation*}
After using Sobolev embedding on the unit sphere, Corollary
~\ref{ptdcoutc} and Corollary \ref{ptdcinc} indicate that
\begin{equation*}
\begin{split}
 &|Z^\b\phi_n|\les\sqrt{E_0}\ep (1+r)^{-\f12}(1+|t-r+R|)^{-\f12-\frac{1}{2}\a},\quad \forall |\b|\leq 2,\\
& |\phi_n|\les\sqrt{E_0}\ep (1+r)^{-1}.
\end{split}
\end{equation*}
We also need to show that $\phi_n$ is uniformly bounded in $C^2$. We first show that $\phi_n$ is bounded in $C^1$.
When $r\leq \f12 R$, estimates \eqref{ptbdpaphi} implies that
\[
 |\pa Z^k\phi_n|^2\les E_0\ep^2(1+\tau)^{-1-\a},\quad \forall k\leq 2.
\]
Here we have to point out that although there $|\b_1|\leq 1$(due to the assumption $|\b_1|+|\b_2|\leq 3$), the estimate holds for
$|\b_1|\leq 2$ if we assume $|\b_1|+|\b_2|\leq 4$, see Remark \ref{remarkpropb}. When $\f12 R\leq r\leq R$, using \eqref{SemrR} and
Sobolev embedding on the unit sphere, we obtain the same estimates as above. For $r\geq
R$, the inequality \eqref{dupsi} implies that
$$\int_{\tau_1}^{\tau_2}  \sup\limits_{v}r^{-\a}\int_{\om}|\pa_u\psi_n|^2d\om d\tau
\les(1+\tau_1)^{-1-\a}\ep^2E_0,\quad \psi_n=rZ^\b\phi_n,\quad |\b|\leq 3.
$$
Using Sobolev embedding on $S^2\times[\tau_1, \tau_2]$, we obtain
$$|r\pa_u Z^\b\phi_n|^2\les \phi_n^2+ r^{\a}(1+\tau)^{-1-\a}\ep^2E_0,\quad \forall |\b|\leq 1.
$$
Recall that $\pa_u=\pa_t-\pa_r$ and $|\phi_n|^2, |\pa_t\phi_n|^2\les
(1+r)^{-1}(1+\tau)^{-1-\a}\ep^2E_0$. We can estimate
\begin{equation*}
 |\pa_r Z^\b\phi_n|\les
(1+r)^{-\f12}(1+\tau)^{-\f12-\f12\a}\sqrt{E_0}\ep,\quad \forall |\b|\leq 1.
\end{equation*}
Since $\nabb=\frac{\Om}{r}$, we have shown that outside the cylinder $\{r\leq R\}$
\[
  |\pa Z^\b\phi_n|\les
(1+r)^{-\f12}(1+\tau)^{-\f12-\f12\a}\sqrt{E_0}\ep,\quad \forall |\b|\leq 1.
\]
It remains to control $C^2$ estimates of $\phi_n$. Outside the smaller cylinder $\{r\leq \frac{1}{4}R\}$, we use the equation
 \eqref{iteration}. In fact, we can write
$$\pa_{rr}\phi_{n+1}=F(\pa\phi_n)+\pa_{tt}\phi_{n+1}-\frac{2}{r}\pa_r\phi_{n+1}-
\lap \phi_{n+1}-N(\Phi, \phi_{n+1})-L(\pa\phi_{n+1}).
$$
Since we have already shown that
\begin{align*}
 &|\pa\phi_n|,\quad|\pa_{tt}\phi_{n}|\les E_0\ep^2(1+r)^{-\f12}(1+\tau)^{-\f12-\f12\a},\\
& |\lap\phi_n|\leq r^{-2}|\Om^2\phi_n|\les
E_0\ep^2(1+r)^{-\f12}(1+\tau)^{-\f12-\f12\a},
\end{align*}
we conclude that
\[
 |\pa_{rr}\phi_n|\les \sqrt{E_0}\ep
 (1+r)^{-\f12}(1+\tau)^{-\f12-\f12\a}.
\]
Hence when $r\leq \frac{R}{4}$, we can show that
\[
 |\pa^2\phi_n|\leq |\pa_{rr}\phi_n|+|\pa Z\phi_n|\les\sqrt{E_0}\ep
 (1+r)^{-\f12}(1+\tau)^{-\f12-\f12\a}
\]
For the case when $r\leq \frac{R}{4}$, we rely on elliptic theory. First we have the elliptic equation for $\phi_{n+1}$
\[
 \Delta \phi_{n+1}=F(\pa\phi_n)+\pa_{tt}\phi_{n+1}-N(\Phi, \phi_{n+1})-L(\pa\phi_{n+1}).
\]
Since we have shown from \eqref{Caest} that
\[
 \|\pa Z^\b\phi_n\|^2_{C^\f12(B_{\f12 R})}\leq \|\nabla Z^\b\phi_n\|_{C^\f12(B_{\f12 R})}^2+\|Z^{\b+1}\phi_n\|_{C^\f12(B_{\f12 R})}^2
\les E_0\ep^2(1+\tau)^{-1-\a},\quad \forall |\b|\leq 1,
\]
we can show that the right hand side of the above elliptic equation
is uniformly bounded in $C^{\f12}(B_{\f12 R})$. Then Schauder
estimates \cite{elliptic} imply that
\[
 \|\phi_n\|_{C^{2, \f12}(B_{\frac{1}{4}R})}\les E_0\ep^2(1+\tau)^{-1-\a}.
\]
In particular, we have shown that
\[
 \sum\limits_{|\b|\leq 2}|\pa^{\b}\phi_n|\les \sqrt{E_0}\ep
 (1+r)^{-\f12}(1+\tau)^{-\f12-\f12\a}.
\]

Now the classical local existence theory ~\cite{johndelay} shows
that there exists a time $t^*>0$ and a unique smooth solution $\phi(t,
x)\in C^\infty([0, t^*)\times \mathbb{R}^{3})$ of equation
~\eqref{THEWAVEEQ}. Moreover
\[
\phi_{n}(t, x)\rightarrow \phi(t, x)
\]
in $C^{\infty}([0, t^*)\times \mathbb{R}^3)$. Therefore
\[
\sum\limits_{|\b|\leq 2}|\pa^{\b}\phi|\les \sqrt{E_0}\ep
(1+r)^{-\f12}(1+\tau)^{-1+\f12\a},\quad\forall (t, x)\in [0,
t^*)\times \mathbb{R}^3.
\]
By a theorem of H$\ddot{o}$rmander ~\cite{hormander} that as long as
the solution is bounded up to the second order derivatives, the
solution exists globally. That is there exists a unique global
solution $\phi(t,x)\in C^\infty(\mathbb{R}^{3+1})$ which solves
~\eqref{THEWAVEEQ}. In addition since
\[
\phi_{n}(t, x)\rightarrow \phi(t, x),\quad (t, x)\in
\mathbb{R}^{3+1},
\]
$\phi$ obeys all the estimates of $\phi_n$ obtained above. We thus
finished the proof of Theorem \ref{maintheorem2}.

\bigskip

To prove Theorem \ref{maintheorem}, it suffices to check that the
functions $\Phi$, $\mathcal{N}^\mu(\pa\Phi)$,
$\mathcal{N}^{\mu\nu}(\pa\Phi)$ satisfy the conditions in Theorem
\ref{maintheorem2} that $\Phi$, $L^\mu$, $h^{\mu\nu}$ satisfy
correspondingly. In fact, notice that
\[
|\pa Z^\b \mathcal{N}^{\mu}(\pa\Phi)|\leq C(\mathcal{N})|\pa^2
Z^\b\Phi|,\quad \forall |\b|\leq 2.
\]
The boundedness of $\pa^2 Z^\b \Phi$, for all $|\b|\leq 2$, follows
from the equation for $Z^\b\Phi$(similarly, express the only unknown
term $\pa_{rr}Z^\b\Phi$ as a combination of terms with known
$L^\infty$ norm). Hence $|\pa Z^\b\mathcal{N}(\pa\Phi)|+|\pa^2 Z^\b
\Phi|$ is bounded by a constant depending on $C_0$ and the
nonlinearity $\mathcal{N}$. For the other conditions, when $t\leq
t_0$, we have
\[
|Z^\b
\mathcal{N}^\mu(\pa\Phi)|+|Z^\b\mathcal{N}^{\mu\nu}(\pa\Phi)|\leq
C(\mathcal{N}, C_0),\quad \forall |\b|\leq 4.
\]
When $t\geq t_0$, we can show
\begin{align*}
&|Z^\b\mathcal{N}^{\mu}|\leq C(\mathcal{N},
C_0)(1+|x|)^{-1-\f12\a_0}(1+(t-|x|)_{+})^{-1},\\
&|Z^\b\mathcal{N}^{\mu\nu}(\Phi)|\leq C(\mathcal{N},
C_0)(1+|x|)^{-\f12 \a_0},\quad |\b|\leq 4.
\end{align*}
Replace $\a$ with $\min\{\frac{\a_0}{6}, \a\}$. Then the functions
$\mathcal{N}^{\mu}(\pa\Phi)$, $\mathcal{N}^{\mu\nu}(\pa\Phi)$
satisfy the conditions in Theorem \ref{maintheorem2}. We thus have
the stability result of Theorem \ref{maintheorem}.

\bibliography{shiwu}{}
\bibliographystyle{plain}
\bigskip

Department of Mathematics, Princeton University, NJ 08544 USA

\textsl{Email}: shiwuy@math.princeton.edu

\end{document}